\documentclass[12pt,a4paper]{amsart}
\usepackage{amsfonts}
\usepackage{amsmath}
\usepackage{amssymb}
\usepackage{faktor}
\usepackage{amscd}
\usepackage{array}
\usepackage{amsthm}
\usepackage{subfigure}
\usepackage{mathdots}
\usepackage{mathrsfs}
\usepackage{algorithm}
\usepackage[noend]{algpseudocode}
\usepackage{graphicx}
\usepackage{color}
\usepackage{pgf}
\usepackage{scrpage2}
\usepackage{multirow}
\usepackage{listings}
\lstset{language=matlab,showstringspaces=false,basicstyle={\ttfamily}}
\usepackage{tikz}
\usepackage{graphicx}
\usepackage[left=2.5cm, right=2.5cm,top=2.5cm,bottom=2.5cm]{geometry}

\newcommand{\N}{\ensuremath{\mathbb{N}}}

\renewcommand{\S}{\ensuremath{\mathbb{S}}}

\newcommand{\R}{\ensuremath{\mathbb{R}}}
\newcommand{\C}{\ensuremath{\mathbb{C}}}

\newcommand{\X}{\mathbb{X}}

\newcommand{\supp}{\textnormal{supp}}

\def\3{\ss}
\def\ep{\varepsilon}
\def\la{\lambda}

\def\Ga{\Gamma}

\newmuskip\pFqmuskip

\newcommand*\pFq[6][8]{
  \begingroup % only local assignments
  \pFqmuskip=#1mu\relax
  % make the comma math active
 % \mathcode`\,=\string"8000
  % and define it to be \pFqcomma
  \begingroup\lccode`\~=`\,
  \lowercase{\endgroup\let~}\pFqcomma
  % typeset the formula
  {}_{#2}F_{#3}{\left(\genfrac..{0pt}{}{#4}{#5};#6\right)}%
  \endgroup
}
\newcommand*\pRegFq[6][8]{
  \begingroup % only local assignments
  \pFqmuskip=#1mu\relax
  % make the comma math active
 % \mathcode`\,=\string"8000
  % and define it to be \pFqcomma
  \begingroup\lccode`\~=`\,
  \lowercase{\endgroup\let~}\pFqcomma
  % typeset the formula
  {}_{#2}\tilde{F}_{#3}{\left(\genfrac..{0pt}{}{#4}{#5};#6\right)}%
  \endgroup
}

\newcommand{\pFqcomma}{\mskip\pFqmuskip}

\DeclareMathOperator*{\Heavy}{H}

\DeclareMathOperator*{\diag}{diag}
\DeclareMathOperator*{\trace}{trace}

\DeclareMathOperator*{\spann}{span}
\DeclareMathOperator{\dist}{dist}
\DeclareMathOperator{\F}{F}

\DeclareMathOperator{\G}{\mathcal{G}}

\DeclareMathOperator*{\vol}{vol}

\DeclareMathOperator{\OOO}{O}
\DeclareMathOperator*{\SO}{SO}

\setcounter{totalnumber}{4}

\newtheorem{thm}{Theorem}[section]
\newtheorem{lemma}[thm]{Lemma}
\newtheorem{remark}[thm]{Remark}

\newtheorem{example}[thm]{Example}
\newtheorem{corollary}[thm]{Corollary}
\newtheorem{proposition}[thm]{Proposition}

\numberwithin{equation}{section}
\numberwithin{table}{section}
\numberwithin{figure}{section}

\long\def\symbolfootnote[#1]#2{\begingroup%
\def\thefootnote{\fnsymbol{footnote}}\footnote[#1]{#2}\endgroup}
%==========================

 % "\AA" already defined

\renewcommand{\mathbf}[1]{\ensuremath{\boldsymbol{#1}}}

\newcommand{\rank}{ \operatorname{rank}}

\renewcommand{\thefootnote}{\fnsymbol{footnote}}

\allowdisplaybreaks
%==================================
\title[Discrepancy kernels]{Spectral decomposition of discrepancy kernels on the Euclidean ball, the special orthogonal group, and the Grassmannian manifold}
\date{\today}
\date{}

\author[J.~Dick]{Josef Dick}
\address[J.~Dick]{University of New South Wales, School of Mathematics and Statistics, Sydney, NSW, 2052, Australia}
\email{josef.dick@unsw.edu.au}

\author[M.~Ehler]{Martin Ehler}
\address[M.~Ehler]{University of Vienna,
Department of Mathematics,
Oskar-Morgenstern-Platz 1, 
A-1090 Vienna
}
\email{martin.ehler@univie.ac.at}

\author[M.~Gr\"af]{Manuel Gr\"af}
\address[M.~Gr\"af]{Austrian Academy of Sciences, Acoustics Research Institute, Vienna, Austria
}
\email{mgraef@kfs.oeaw.ac.at}

\author[C.~Krattenthaler]{Christian Krattenthaler}
\address[C.~Krattenthaler]{University of Vienna,
Department of Mathematics,
Oskar-Mor\-gen\-stern-Platz 1, 
A-1090 Vienna
}
\email{christian.krattenthaler@univie.ac.at}

%===============================================================================
\begin{document}

\begin{abstract}
To numerically approximate Borel probability measures by finite atomic measures, we study the spectral decomposition of discrepancy kernels when restricted to compact subsets of $\R^d$. For restrictions to the Euclidean ball in odd dimensions, to the rotation group $\SO(3)$, and to the Grassmannian manifold $\G_{2,4}$, we compute the kernels' Fourier coefficients and determine their asymptotics. The $L_2$-discrepancy is then expressed in the Fourier domain that enables efficient numerical minimization based on the nonequispaced fast Fourier transform. For $\SO(3)$, the nonequispaced fast Fourier transform is publicly available, and, for $\G_{2,4}$, the transform is derived here. We also provide numerical experiments for $\SO(3)$ and $\G_{2,4}$. 
\end{abstract}

\keywords{Discrepancy kernels, spectral decompositions, 
Euclidean ball, special orthogonal group, Grassmannian manifold,
nonequispaced fast Fourier transform%
%submit to Mathematics of Computation or Foundations of Computational Mathematics?
}

\subjclass[2010]{Primary 42C10; Secondary
42B10, 43A65, 43A85, 65T40% 
% Trigonometric approximation and interpolation
%%42C15, %General harmonic expansions, frames
%%30E05, %Functions of a complex variable, Moment problems, interpolation problems
%%65F30% other matrix algorithms
}
%\noindent\textit{2010 AMS Mathematics Subject Classification} : \text{
%65T40, % Trigonometric approximation and interpolation
%42C15, %General harmonic expansions, frames
%30E05, %Functions of a complex variable, Moment problems, interpolation problems
%65F30 % other matrix algorithms
%}
%\end{abstract}
\maketitle

\section{Introduction}\label{sect:Einleitung}
Consider a Borel probability measure $\mu:\mathscr{B}(\R^d)\rightarrow[0,1]$ on $\R^d$, where $\mathscr{B}(\R^d)$ denotes the Borel sigma algebra on $\R^d$. For fixed $n\in\N$, we aim to allocate a suitable $n$-point set $\{x_1,\ldots,x_n\}\subset\R^d$ such that the normalized atomic measure 
\begingroup
\setlength{\abovedisplayskip}{2pt}
\setlength{\belowdisplayskip}{2pt}
\begin{equation}\label{eq:nu def}
\nu_n:=\frac{1}{n}\sum_{j=1}^n\delta_{x_j}
\end{equation}
\endgroup
approximates $\mu$. Here, $\delta_{x_j}:\mathscr{B}(\R^d)\rightarrow\{0,1\}$ denotes the point measure localized at $x_j$. To quantify the $L_2$-discrepancy between $\mu$ and $\nu_n$, select a measure $\beta$ on $\mathscr{B}(\R^d)$ with $\mu,\delta_x\in L_2(\mathscr{B}(\R^d),\beta)$, for all $x\in\R^d$, and consider
\begin{equation}\label{eq:fund def}
\mathscr{D}_{\beta}(\mu,\nu_n):=\|\mu-\nu_n\|^2_{L_2(\mathscr{B}(\R^d),\beta)}=\int_{\mathscr{B}(\R^d)} \left| \mu(B)- \nu_n(B)\right|^2{\rm d} \beta(B),
\end{equation}
cf.~\cite{Kuipers:1974la,Matousek:2010kb,Nowak:2010rr}, see Section~\ref{sec:intro 2} for explicit examples. For fixed $n\in\N$, we aim to minimize $\mathscr{D}_{\beta}(\mu,\nu_n)$ among all $n$-point sets $\{x_1,\ldots,x_n\}\subset\R^d$. The present manuscript is concerned with discretizations of \eqref{eq:fund def} that facilitate numerical minimization.

The associated \emph{discrepancy kernel} $K_\beta:\R^d\times\R^d\rightarrow\R$ is defined by 
\begin{equation}\label{eq:K disc2}
K_\beta(x,y):= \langle \delta_x,\delta_y\rangle_{L_2(\mathscr{B}(\R^d),\beta)}=\int_{\mathscr{B}(\R^d)} \delta_x(B)\delta_y(B) {\rm d} \beta(B),
\end{equation}
and we assume it is continuous. Fubini's Theorem and $\mu(B)=\int_{\R^d} \delta_x(B){\rm d}\mu(x)$
applied to 
\begin{equation*}
\|\mu-\nu_n\|^2_{L_2(\mathscr{B}(\R^d),\beta)}=\|\mu\|^2_{L_2(\mathscr{B}(\R^d),\beta)}-2\langle \mu,\nu_n\rangle_{L_2(\mathscr{B}(\R^d),\beta)}+\|\nu_n\|^2_{L_2(\mathscr{B}(\R^d),\beta)}
\end{equation*}
yields that \eqref{eq:fund def} is identical to
\begin{equation}\label{eq:D neu}
\mathscr{D}_{\beta}(\mu,\nu_n)=\iint\limits_{\R^d\times\R^d} K_{\beta}(x,y){\rm d}\mu(x) {\rm d}\mu(y) -2\sum_{j=1}^n\int_{\R^d} \frac{K_{\beta}(x,x_j)}{n}{\rm d}\mu(x)
+\sum_{i,j=1}^n \frac{K_{\beta}(x_i,x_j)}{n^2} .
\end{equation}

If a compact set $\X\subset\R^d$ is known in advance such that $\supp(\mu)\subset\X$, then we shall restrict the minimization to $\{x_1,\ldots,x_n\}\subset \X$, so that only the restricted kernel $K_\beta|_{\X\times \X}$ matters. By endowing $\X$ with a finite Borel measure $\sigma_\X$ having full support, Mercer's Theorem yields an orthonormal basis $\{\phi_l\}_{l=0}^\infty$ for $L_2(\X,\sigma_\X)$ and coefficients $(a_l)_{l=0}^\infty$ such that the spectral decomposition 
\begin{equation}\label{eq:FT exp K}
K_\beta|_{\X\times \X}(x,y) = \sum_{l=0}^\infty a_l \phi_l(x)\overline{\phi_l(y)},\quad x,y\in\X,
\end{equation}
holds with absolute and uniform convergence. We call $(a_l)_{l=0}^\infty$ the Fourier coefficients of the kernel $K_\beta|_{\X\times \X}$. If $\supp(\mu),\supp(\nu_n)\subset\X$, then the Fourier expansion of the $L_2$-discrepancy \eqref{eq:D neu} is
\begin{equation}\label{eq:fund eq min}
%\mathscr{D}_{\beta}(\mu,\nu_n) = \sum_{l=0}^\infty a_l  \Big| \hat{\mu}_{l}-\frac{1}{n}\sum_{j=1}^n\overline{\phi_{l}(x_j)}  \Big|^2,
%\qquad \hat{\mu}_l:=\int_{\X}\overline{ \phi_l(x)}{\rm d}\mu(x),
\mathscr{D}_{\beta}(\mu,\nu_n) = \sum_{l=0}^\infty a_l  \left| \hat{\mu}_{l}-\hat{\nu}_{n,l}  \right|^2,
\qquad \hat{\mu}_l:=\int_{\X}\overline{ \phi_l(x)}{\rm d}\mu(x),\quad \hat{\nu}_{n,l}:=\frac{1}{n}\sum_{j=1}^n\overline{\phi_{l}(x_j)},
\end{equation}
where the Fourier coefficients $\hat{\mu}_l$ and $\hat{\nu}_{n,l}$ of the measures $\mu$ and $\nu_n$, respectively, are well-defined if $a_l\neq 0$.  
Truncation of the discretization \eqref{eq:fund eq min} enables the use of the nonequispaced fast Fourier transform, thereby offering more efficient minimization of $\mathscr{D}_\beta(\mu,\nu_n)$, cf.~\cite{Graf:2013zl,Graf:2011lp}. Thus, we aim to 
\begin{quotation}
A) compute $(a_l)_{l=0}^\infty$ and $(\phi_l)_{l=0}^\infty$ in the Fourier expansion \eqref{eq:FT exp K} of $K_\beta|_{\X\times \X}$.
\end{quotation}

%The $L_2$-discrepancy $\mathscr{D}_\beta(\mu,\nu_n)$ is closely related to the construction of quadrature points for integration of functions in the reproducing kernel Hilbert space $\mathscr{H}_\beta(\X)$ generated by $K_\beta|_{\X\times \X}$. Koksma-Hlawka type inequalities yield that $\mathscr{D}_\beta(\mu,\nu_n)$ is the smallest constant
%such that  
%\begin{equation}\label{eq:eq eq}
%\Big|\int_\X f(x){\rm d}\mu(x) - \frac{1}{n}\sum_{j=1}^n f(x_j)  \Big|^2 \leq \mathscr{D}_\beta(\mu,\nu_n) \|f\|_{\mathscr{H}_\beta(\X)}^2,\quad f\in \mathscr{H}_\beta(\X),
%\end{equation}
The $L_2$-discrepancy $\mathscr{D}_\beta(\mu,\nu_n)$ also coincides with the worst case integration error 
\begin{equation}\label{eq:eq eq}
\mathscr{D}_\beta(\mu,\nu_n) = \sup_{\|f\|_{\mathscr{H}_\beta(\X)}\leq 1 }\left|\int_\X f(x){\rm d}\mu(x) - \frac{1}{n}\sum_{j=1}^n f(x_j)  \right|^2 
\end{equation}
with respect to the reproducing kernel Hilbert space $\mathscr{H}_\beta(\X)$ generated by $K_\beta|_{\X\times \X}$, 
cf.~\cite{Brauchart:2013il,Brauchart:2013pt,Gnewuch:2012jy,Graf:2013zl}. 
%For independent random points drawn from $\mu$, the expected $L_2$-discrepancy satisfies $\mathbb{E}\mathscr{D}_\beta(\mu,\nu_n)\lesssim \frac{1}{n}$, in particular, this inequality holds for the sequence of minimizing points. 
To specify $\mathscr{H}_\beta(\X)$, we aim to 
\begin{quotation}
B) identify $\mathscr{H}_\beta(\X)$ with a classical function space.
\end{quotation}
Fourier decay properties generally quantify Sobolev smoothness. To accomplish B), we aim to determine the asymptotics of $K_\beta|_{\X\times \X}$'s Fourier coefficients $(a_l)_{l=0}^\infty$ in \eqref{eq:FT exp K}. 

For $\X=\S^{d-1}$ and a particular choice of $\beta$, the kernel $K_{\beta}|_{\S^{d-1}\times \S^{d-1}}$ essentially coincides with the Euclidean distance, see \cite{Brauchart:2013il,Brauchart:2013pt}. The Fourier expansion is determined in \cite{Baxter:2001pb}, and the 
decay of the Fourier coefficients yields that $K_{\beta}|_{\S^{d-1}\times \S^{d-1}}$ reproduces the Sobolev space $\mathscr{H}_\beta(\S^{d-1})=\mathbb{H}^{\frac{d}{2}}(\S^{d-1})$. For the sphere and the torus, the nonequispaced fast Fourier transform is available, and both A) and B) are discussed in \cite{Graf:2011lp,Graf:2013fk}.  

\smallskip
This manuscript is dedicated to derive analogous results for other compact sets $\X$. We focus on the unit ball, the special orthogonal group, and the Grassmannian manifold, 
\begin{align*}
\mathbb{B}^d&:=\{x\in\R^d:\|x\|\leq 1\},\\
\SO(d)&:=\{x\in\R^{d\times d} : \det(x)=1,\, x^{-1}=x^\top \},\\
\mathcal{G}_{k,d}&:=\{x\in\R^{d\times d} : x^\top=x,\, x^2=x,\, \trace(x)=k\}.
\end{align*}
%Guided by the results on the sphere, we derive a discrepancy kernel that essentially coincides with the Euclidean distance on $\X\times\X$. Our main interest is in 
We achieve goal A) for $\mathbb{X}=\mathbb{B}^d$ with odd $d$. Both goals,  A) and B), are achieved for $\SO(3)$ and $\G_{2,4}$. We also provide numerical experiments. For $\SO(3)$, the computations are based on the nonequispaced fast Fourier transform designed in \cite{Graf:2009ye,Potts:2009gb}. For $\G_{2,4}$, we derive the nonequispaced fast Fourier transform by parametrization via the double covering $\mathbb{S}^2\times \mathbb{S}^2$ and developing the respective transform there. We also accomplish B) for the general cases $\SO(d)$ and $\G_{k,d}$. 

%\bigskip
%The above derivations are still valid for atomic measures with weights $w_1,\ldots,w_n\geq 0$ by replacing $\frac{1}{n}\sum_{j=1}^n \ldots$ with $\sum_{j=1}^n w_j\ldots$ in \eqref{eq:nu def}, \eqref{eq:fund eq min}, and \eqref{eq:eq eq}. In fact, our outline offers further variations. One alternative is to replace the point measures $\delta_x$ in \eqref{eq:K disc2} with more general Borel measures $\alpha_x:\mathscr{B}(\R^d)\rightarrow [0,\infty)$ and define $K_\beta(x,y):= \langle \alpha_x,\alpha_y\rangle_{L_2(\mathscr{B}(\R^d),\beta)}$. One may also skip \eqref{eq:fund def} altogether, start with a continuous, positive definite kernel $K:\mathbb{X}\times\mathbb{X}\rightarrow\R$, and take \eqref{eq:fund eq min} as the definition of discrepancy with respect to this kernel. Furthermore, the measure $\nu_n$ is not necessarily required to be atomic. In particular, our theoretical results are also useful if $\nu_n$ is more general, for instance, supported on a curve in $\X$. This relates to recent results in \cite{Chauffert:2017eb,Gournay:2019kt}, where measures on a $2$-dimensional cube are approximated by a measure supported on a curve with further side constraints. In Section \ref{sec:8} of the present paper, we shall extend these approximations to compact metric spaces and provide numerical illustrations for the sphere. 
%

\section{Two introductory examples}\label{sec:intro 2}
%The present section is dedicated to provide few examples well-known in the literature. 
We first present a well-known elementary example on the interval $[0,s]$, for which both aims A) and B) are achieved. Second, to support our perspective on discrepancy, we prove that the so-called 
Askey function is a discrepancy kernel of the form \eqref{eq:K disc2}.
\subsection{The Brownian motion kernel on $[0,s]$}\label{sec:expl choice beta and D}
%Let $s>0$ and suppose that the two Borel probability measures $\mu$ and $\nu_n=\frac{1}{n}\sum_{j=1}^n\delta_{x_j}$ are supported on the interval $[0,s]$, where $\delta_{x_j}$ denotes the Dirac measure of $x_j\in[0,s]$.  
Let ${\rm d}r$ be the Lebesgue measure on $[0,\infty)$. The mapping
$ %\begin{equation*}
h:[0,\infty)\rightarrow \mathscr{B}(\R)$ defined by $r\mapsto [r,\infty)
$ %\end{equation*}
induces the pushforward measure $\beta:=h_*({\rm d}r)$ that induces the discrepancy 
\begin{equation*}
\mathscr{D}_\beta(\mu,\nu_n)= \int_{0}^\infty |\mu([r,\infty))-\nu_n([r,\infty))|^2 {\rm d}r. 
\end{equation*}
The associated discrepancy kernel $K_\beta:\R\times\R\rightarrow\R$ is\footnote[1]{For $r\in\R$, we use the notation $r_+=\begin{cases}
r,& r\geq 0,\\ 0,& 
\text{otherwise.} \end{cases}$} 
\begin{equation*}
K_\beta(x,y) = \int_0^\infty \delta_x([r,\infty))\delta_y([r,\infty)) {\rm d}r =\min(x,y)_+,
%\begin{cases}
% \min(x,y),& x,y\geq 0,\\
% 0,& \text{otherwise},
%\end{cases}
\end{equation*}
so that $\mathscr{D}_\beta(\delta_x,\delta_y)=|x-y|$ for $x,y\in[0,\infty)$. The restriction of the kernel $K_\beta$ to $[0,s]\times[0,s]$ has the Fourier expansion 
\begin{align*}
K_\beta(x,y) %&  = \frac{1}{2}(x+y-|x-y|)  \\
&=\sum_{\substack{m\in\N\\ m \text{ odd}}} \frac{4s^2}{m^2\pi^2}\cdot \frac{\sin(\frac{\pi}{2s} mx)}{\sqrt{\frac{s}{2}}}\cdot\frac{\sin(\frac{\pi}{2s} my)}{\sqrt{\frac{s}{2}}},\quad x,y\in[0,s],
\end{align*}
with respect to the Lebesgue measure $\sigma_{[0,s]}$ on $[0,s]$. The reproducing kernel Hilbert space is
\begin{equation*}
\mathscr{H}_{K_\beta}([0,s])=\{f:[0,s]\rightarrow \C \;:\; f \text{ is absolutely continuous, } f(0)=0,\; f'\in L_2([0,s])\},
\end{equation*}
where the inner product between $f$ and $g$ is given by $\langle f',g'\rangle_{L_2([0,s])}$, cf.~\cite{Alpay:2015mb,Dick:2014bj} and \cite[Section~9.5.5]{Nowak:2010rr}. Note that $K_\beta|_{[0,1]\times[0,1]}$ is often called the Brownian motion kernel and $\mathscr{H}_{K_\beta}([0,s])$ is continuously embedded into the Sobolev space $\mathbb{H}^1([0,s])$.

\subsection{Askey's function and its restrictions}\label{sec:a}
Many positive definite kernels in the literature are of the form \eqref{eq:K disc2} and, hence, are discrepancy kernels. For odd $d$, Askey's kernel function $(x,y)\mapsto (1-\|x-y\|)^{\frac{d+1}{2}}_+$ is positive definite, cf.~\cite{Gneiting:1999ay}. In the following, we shall check that it is of the form \eqref{eq:K disc2}.

Denote the Euclidean ball of radius ${s}$ centered at $z\in\R^d$ by 
\begin{equation*}
\mathbb{B}^d_{s}(z):=\{x\in\R^d:\|x-z\|\leq {s}\},
\end{equation*}
with the conventions $\mathbb{B}^d_{s}:=\mathbb{B}^d_{s}(0)$ and $\mathbb{B}^d:=\mathbb{B}^d_1$. Fix $r>0$ and consider the discrepancy
\begin{equation}
\mathscr{D}_{d,r}(\mu,\nu_n):=\frac{1}{\vol(\mathbb{B}^d_{\frac{r}{2}})}
%\frac{(2r)^d\Gamma(\frac{d}{2}+1)}{\pi^{d/2}} 
\int_{\R^d} \left| \mu(\mathbb{B}^d_{\frac{r}{2}}(z))- \nu_n(\mathbb{B}^d_{\frac{r}{2}}(z))\right|^2{\rm d} z,
\end{equation}
where $\vol(\mathbb{B}^d_{\frac{r}{2}})=\frac{\pi^{d/2}}{\Gamma(\frac{d}{2}+1)}(\frac{r}{2})^d$.  
The associated discrepancy kernel is 
\begin{equation}\label{eq:kern nur mit r}
K_{d,r}(x,y)=\frac{1}{\vol(\mathbb{B}^d_{\frac{r}{2}})}\int_{\R^d} \delta_x(\mathbb{B}^d_{\frac{r}{2}}(z))\delta_y(\mathbb{B}^d_{\frac{r}{2}}(z)) {\rm d} z. 
\end{equation}
In order to additionally integrate over $r$, recall the 
(generalized) hypergeometric functions
\begin{equation*}
\pFq{k}{l}{f_1,\ldots,f_k}{g_1,\ldots,g_l}{z}:=\sum_{n=0}^\infty \frac{(f_1)_n \cdots (f_k)_n}{(g_1)_n \cdots (g_l)_n}\frac{z^n}{n!},
\end{equation*}
where $f_1,\ldots,f_k,g_1,\ldots,g_l,z\in\R$ and $(f)_n:=f\cdot (f+1)\cdots(f+n-1)$ denotes the Pochhammer symbol with $(f)_0:=1$. We consider $G_d:[0,\infty)\rightarrow\R$ given by
\begin{equation*}
G_d(r) = \begin{cases}
\pFq{2}{1}{-\frac{d+1}{4},-\frac{d-1}{4}}{-\frac{d}{2}}{r^2},& 0\leq r\leq 1,\\
0,& 1<r.
\end{cases}
\end{equation*}
Since $d$ is odd, either $\frac{d+1}{4}$ or $\frac{d-1}{4}$ is a natural number, so that the series terminates and 
$G_d$ is a polynomial in $r^{2}$ on $[0,1]$. 
%, and we can compute 
%\begin{equation*}
%G_d(r)=1+\sum_{l\geq 1} \frac{1}{(-8)^l l!} \frac{(d+1)(d-1)\cdots (d+3-4l)}{d(d-2)\cdots(d-2(l-1))}r^{2l},\quad 0\leq r\leq 1.
%\end{equation*}
%See Figure \ref{fig:Gd} for $G_3$. 
%\begin{figure}
%\includegraphics[width=.3\textwidth]{Gd_fig.pdf}
%\caption{$G_3$ on $[0,1]$}\label{fig:Gd}
%\end{figure}
By integration with respect to $G_d$, we obtain the $L_2$-discrepancy and the associated discrepancy kernel 
\begin{equation*}%\label{eq:disc def 1}
\mathscr{D}_{d}(\mu,\nu_n):=
%\!\!\int_0^\infty \!\frac{2^d\Gamma(\frac{d}{2}+1)}{r^{d}\pi^{d/2}}  \int_{\R^d} \!\Big| \mu(\mathbb{B}^d_{\frac{1}{2r}}(z))- \nu_n(\mathbb{B}^d_{\frac{1}{2r}}(z))\Big|^2{\rm d} z\; {\rm d} G_d(r) = \!\!
\int_0^\infty  \!\!\mathscr{D}_{d,r}(\mu,\nu_n){\rm d} G_d(r),\quad \text{ and }\quad K_{d}(x,y)= \int_0^\infty \!K_{d,r}(x,y) {\rm d} G_d(r),
\end{equation*}
respectively. It turns out that $K_d$ coincides with Askey's function.
%Both, $\mathscr{D}_{d}$ and $K_{d}$, are superpositions of $\mathscr{D}_{d,r}$ and $K_{d,r}$, for $r\in[0,\infty)$, respectively, that are weighted by $G_d(r)$. 
\begin{thm}\label{thm:Askey}
Let $d$ be odd. The discrepancy kernel $K_d$ satisfies %coincides with Askey's kernel function, i.e., 
\begin{equation}\label{eq:Kud}
K_d(x,y)=(1-\|x-y\|)^{\frac{d+1}{2}}_+,\quad x,y\in\R^d.
\end{equation}
\end{thm}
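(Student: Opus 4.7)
My plan is to reduce the claim to a one-variable integral identity, which for odd $d$ becomes a finite polynomial identity that can in principle be verified elementarily.

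First, I would unwind the geometric meaning of $K_{d,r}$. Since $\delta_x(\mathbb{B}^d_{r/2}(z)) = \mathbf{1}_{\{z\in\mathbb{B}^d_{r/2}(x)\}}$, the definition \eqref{eq:kern nur mit r} equals the normalized intersection volume
$$K_{d,r}(x,y) = \frac{\vol\bigl(\mathbb{B}^d_{r/2}(x) \cap \mathbb{B}^d_{r/2}(y)\bigr)}{\vol(\mathbb{B}^d_{r/2})}.$$
By translation and scale invariance this depends only on $t:=\|x-y\|/r$; slicing each ball perpendicularly to the line through the two centres produces the cap-volume formula $K_{d,r}(x,y)=V_d(\|x-y\|/r)$, where
$$V_d(t) := \frac{2\,\Gamma(\frac{d}{2}+1)}{\sqrt{\pi}\,\Gamma(\frac{d+1}{2})} \int_t^1 (1-u^2)^{(d-1)/2}\,du,\qquad t\in[0,1],$$
and $V_d(t)=0$ for $t>1$. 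Since $d$ is odd, $(d-1)/2$ is a nonnegative integer, so $V_d$ is a polynomial on $[0,1]$.

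Setting $\rho:=\|x-y\|$, the theorem reduces to the univariate identity
$$\int_0^\infty V_d(\rho/r)\,dG_d(r) = (1-\rho)_+^{(d+1)/2},$$
and both sides vanish for $\rho\geq 1$ (since $\supp(dG_d)\subset[0,1]$ and $V_d(\rho/r)=0$ whenever $r\leq\rho$), so I may restrict to $\rho\in[0,1]$. Next I would integrate by parts in $r$, transferring the derivative onto $V_d(\rho/r)$; using $V_d'(t)=-c_d(1-t^2)^{(d-1)/2}$ with $c_d := \frac{2\Gamma(d/2+1)}{\sqrt{\pi}\Gamma((d+1)/2)}$, the chain rule gives
$$\frac{d}{dr}V_d(\rho/r)=c_d\,\frac{\rho}{r^{d+1}}\,(r^2-\rho^2)^{(d-1)/2}.$$
After collecting the boundary contributions (the jump of $G_d$ at $r=1$ and the vanishing of $V_d$ at $r=\rho$), one is left with an integral of $G_d(r)$ against a polynomial in $r,\rho$ over $[\rho,1]$.

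Substituting the explicit ${}_2F_1$ expression for $G_d(r)$ and expanding $(r^2-\rho^2)^{(d-1)/2}$ by the binomial theorem turns this into a finite linear combination of elementary monomial integrals $\int_\rho^1 r^a\,dr$, producing an explicit polynomial in $\rho$. The crux, and the main obstacle, is then to verify that this polynomial equals $(1-\rho)^{(d+1)/2}$: this is a finite hypergeometric identity in which the parameters $-\tfrac{d+1}{4},-\tfrac{d-1}{4},-\tfrac{d}{2}$ of $G_d$ have been engineered precisely so that the required collapse occurs. I would handle this step either by matching coefficients of $\rho^k$ via a Chu--Vandermonde or Saalsch\"utz-type summation, or by exploiting that $(1-\rho)^{(d+1)/2}$ has a zero of order exactly $\tfrac{d+1}{2}$ at $\rho=1$ to pin down the polynomial from a small number of conditions. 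An alternative route of comparable difficulty is to pass to the $d$-dimensional Fourier transform: $K_{d,r}$ is the normalized autocorrelation of the indicator of a ball, so its transform equals a constant times $\|\xi\|^{-d}J_{d/2}(\pi r\|\xi\|)^2$, and matching $\int_0^1 J_{d/2}(\pi r\|\xi\|)^2\,dG_d(r)$ against the known Fourier transform of $(1-\|x\|)_+^{(d+1)/2}$ yields an equivalent Bessel-function identity.
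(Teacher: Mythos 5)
Your proposal follows essentially the same route as the paper: recognizing $K_{d,r}$ as the normalized ball-intersection volume (Euclid's hat function), integrating by parts in $r$ to transfer the derivative onto the hat while keeping $G_d$ itself, expanding binomially, and matching coefficients of powers of $\|x-y\|$ against those of $(1-\|x-y\|)^{(d+1)/2}$ via classical hypergeometric summations. The paper completes the final step exactly as you anticipate, using Gauss's theorem for the even-power coefficients and the Pfaff--Saalsch\"utz theorem for the odd-power coefficients.
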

The proof is presented 
in Appendix~\ref{sec:Askey}. Provided that $d\geq 3$, Askey's kernel function  reproduces the Sobolev space $\mathbb{H}^{\frac{d+1}{2}}(\R^d)$ with an equivalent norm, see \cite{Wendland:2004wd}. 
%\begin{remark}\label{rem:0098}
%Results on kernel restrictions from \cite{Fuselier:2012jt} imply that $K_{d}|_{\S^{d-1}\times \S^{d-1}}$ reproduces $\mathbb{H}^{\frac{d}{2}}(\S^{d-1})$ with equivalent norms provided that $d\geq 3$. Analogously, for $d\geq 2$, $K_{{d^2}}|_{\SO(d)\times \SO(d)}$ and $K_{{d^2}}|_{\G_{k,d}\times \G_{k,d}}$ reproduce  $\mathbb{H}^{\frac{d(d-1)+2}{4}}(\SO(d))$ and $\mathbb{H}^{\frac{k(d-k)+1}{2}}(\G_{k,d})$ with equivalent norms, respectively. 
%\end{remark}

%\section{Discrepancy kernels on the sphere}
\section{The distance kernel on $\S^{d-1}$}\label{sec:sphere}
This section is dedicated to recall results on discrepancy kernels on the sphere $\S^{d-1}\subset\R^d$, for $d\geq 2$, from \cite{Brauchart:2013il,Brauchart:2013pt,Graf:2013zl,Sk20} that shall guide our subsequent investigations.

%Suppose that the two Borel probability measures $\mu$ and $\nu_n=\frac{1}{n}\sum_{j=1}^n\delta_{x_j}$ are supported on the sphere $\S^{d-1}$, where $\delta_{x_j}$ denotes the Dirac measure of $x_j\in\S^{d-1}$.  
Denote the geodesic ball of radius $r$ centered at $z\in\S^{d-1}$ by 
\begin{equation*}
B^{\S^{d-1}}_r(z):= \{x\in\S^{d-1} : \dist_{\S^{d-1}}(x,z)\leq r\},
\end{equation*}
where $\dist_{\S^{d-1}}(x,z)=\arccos(\langle x,z\rangle)$ is the geodesic distance on $\S^{d-1}$. We define 
\begin{equation*}
h:[0,\pi]\times\S^{d-1}\rightarrow \mathscr{B}(\R^d),\qquad (r,z)\mapsto B^{\S^{d-1}}_r(z)
\end{equation*}
and endow $[0,\pi]$ with the weighted Lebesgue measure $\sin(r){\mathrm d}r$, whereas $\S^{d-1}$ carries the normalized, orthogonal invariant surface measure $\sigma_{\S^{d-1}}$. The push-forward $\beta_d:=h_*(\sin(r){\mathrm d}r \otimes \sigma_{\S^{d-1}})$ is a measure on $\mathscr{B}(\R^d)$, so that the associated $L_2$-discrepancy is
\begin{equation*}
\mathscr{D}_{\beta_d}(\mu,\nu_n)= \int_{0}^\pi\int_{\S^{d-1}} |\mu(B^{\S^{d-1}}_r(z))-\nu_n(B^{\S^{d-1}}_r(z))|^2 {\rm d}\sigma_{\S^{d-1}}(z)\sin(r){\rm d}r. 
\end{equation*}
The associated discrepancy kernel is 
\begin{equation}\label{eq:def K new}
K_{\beta_d}(x,y)=\int_{0}^\pi\int_{\S^{d-1}}\delta_x(B^{\S^{d-1}}_r(z))\delta_y(B^{\S^{d-1}}_r(z))   {\rm d}\sigma_{\S^{d-1}}(z)\sin(r){\rm d}r,\quad x,y\in \R^{d}.
\end{equation}
According to \cite{Brauchart:2013il,Brauchart:2013pt,Graf:2013zl}, see also \cite{Alexander:1975rp}, $K_{\beta_d}$ satisfies
\begin{equation}\label{eq:K1 formula}
K_{\beta_d}(x,y)=1-\frac{\Gamma(\frac{d}{2})}{2\sqrt{\pi}\Gamma(\frac{d+1}{2})}\|x-y\|,\qquad x,y\in\S^{d-1}.
\end{equation}
If either $x$ or $y$ is not contained in $\S^{d-1}$, then $K_{\beta_d}(x,y)=0$. 

Choose $\sigma_\X:=\sigma_{\S^{d-1}}$ for the decomposition \eqref{eq:FT exp K} and let $\{Y^m_{{l}} : l=1,\ldots,Z(d,m)\}\subset L_2(\S^{d-1},\sigma_{\S^{d-1}})$ denote the set of orthonormal spherical harmonics of degree $m$ on $\S^{d-1}$, where 
$%\begin{equation*}
Z(d,m)=\frac {2m+d-2}{d-2}\binom {m+d-3}m.
$ %\end{equation*}
For $\tau>(d-1)/2$, the Sobolev space $\mathbb{H}^\tau(\S^{d-1})$ is the reproducing kernel Hilbert space associated with the reproducing kernel
\begin{equation}\label{eq:native kernel sphere}
(x,y)\mapsto \sum_{m=0}^\infty (1+m(m+d-2))^{-\tau}\sum_{l=1}^{Z(d,m)}Y^m_{l}(x)\overline{Y^m_{l}(y)},\quad x,y\in\S^{d-1}.
\end{equation}
The coefficients in the Fourier expansion 
\begin{equation*}
1-\frac{\Gamma(\frac{d}{2})}{2\sqrt{\pi}\Gamma(\frac{d+1}{2})}\|x-y\|= \sum_{m=0}^\infty c_m \sum_{l=1}^{Z(d,m)}Y^m_{l}(x)\overline{Y^m_{l}(y)},\quad x,y\in\S^{d-1},
\end{equation*}
satisfy $|c_m|\sim m^{-d}$, cf.~\cite{Brauchart:2013il}. This is the same asymptotics as the coefficients in \eqref{eq:native kernel sphere} for $s=d/2$. Therefore, $K_{\beta_d}|_{\S^{d-1}\times\S^{d-1}}$ reproduces the Sobolev space $\mathscr{H}_{\beta_d}(\S^{d-1})=\mathbb{H}^{\frac{d}{2}}(\S^{d-1})$ with an equivalent 
norm\footnote[2]{The kernel $K_{\beta_d}|_{\S^{d-1}\times\S^{d-1}}$ generates an inner product $\langle\cdot,\cdot\rangle_{K_{\beta_d}|_{\S^{d-1}\times\S^{d-1}}}$ in $\mathbb{H}^{\frac{d}{2}}(\S^{d-1})$, for which it is reproducing and the induced norm is equivalent to the standard norm in $\mathbb{H}^{\frac{d}{2}}(\S^{d-1})$, which is induced by the standard kernel \eqref{eq:native kernel sphere}.}. 

In order to determine the Fourier coefficients of kernels on the sphere that are polynomial in $\|x-y\|$, such as $K_{\beta_d}|_{\S^{d-1}\times\S^{d-1}}$, we require the Fourier coefficients of the monomial terms
$ 
\|x-y\|^p$. For any $p\in\N$, the Fourier expansion 
\begin{equation}\label{eq:expans sphere uni}
2^{-\frac{p}{2}}\|x-y\|^p = \sum_{m=0}^\infty a_m(p,\S^{d-1}) \sum_{l=1}^{Z(d,m)}Y^m_{l}(x)\overline{Y^m_{l}(y)},\quad x,y\in\S^{d-1},
\end{equation}
holds with coefficients 
determined by 
\begin{equation}\label{eq:ft coeff}
a_m(p,\S^{d-1}):=\iint\limits_{\S^{d-1}\times \S^{d-1}} 2^{-\frac{p}{2}}\|x-y\|^p \overline{Y^m_{l}(x)}Y^m_{l}(y){\rm d}\sigma_{\S^{d-1}}(x){\rm d}\sigma_{\S^{d-1}}(y).
\end{equation}
Note that \eqref{eq:ft coeff} is well-defined for the entire range $p>-(d-1)$ and $p$ is not required to be an integer. For $p>0$, the following proposition is essentially due to \cite{Baxter:2001pb}, see also \cite{Brauchart:2013il,Brauchart:fk}. Simple continuation arguments cover the full range of $p$, and the asymptotics  $\frac{\Gamma(-\frac{p}{2}+m)}{\Gamma(\frac{p}{2}+d-1+m)}=m^{-(p+d-1)}(1+o(1))$ are standard.
\begin{proposition}[\cite{Baxter:2001pb}]\label{prop:sphere unt}
Suppose $d\geq 2$. For any $p >-(d-1)$, we have
\begin{align}
  a_m(p,\S^{d-1})  
  & =\frac{2^{d} \Gamma(\frac{d}{2})}{4\sqrt{\pi}}\cdot\frac{2^{p/2} \Gamma(\frac{d}{2}+\frac{p}{2}-\frac{1}{2})}{\Gamma(-\frac{p}{2})}\cdot \frac{\Gamma(-\frac{p}{2}+m)}{\Gamma(\frac{p}{2}+d-1+m)}.\label{eq:una}
\end{align}
In particular, if $p\not\in 2\N$, then 
\begin{align}\label{eq:as sphe}
|a_m(p,\S^{d-1})|&=\left|\frac{2^{d} \Gamma(\frac{d}{2})}{4\sqrt{\pi}}\cdot\frac{2^{p/2} \Gamma(\frac{d}{2}+\frac{p}{2}-\frac{1}{2})}{\Gamma(-\frac{p}{2})}\right|m^{-(p+d-1)}(1+o(1)),
\end{align}
and the series \eqref{eq:expans sphere uni} terminates if $p\in 2\N$. 
\end{proposition}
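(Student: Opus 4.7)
The plan is to follow Baxter's Funk--Hecke strategy for $p>0$, then extend to the full range $p>-(d-1)$ by analytic continuation in $p$, and finally read off the asymptotics from Stirling.

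\textbf{Step 1 (reduction to a one-dimensional integral).} On $\S^{d-1}$ one has $\|x-y\|^2=2(1-\langle x,y\rangle)$, so $2^{-p/2}\|x-y\|^p$ is a zonal function, namely $f(\langle x,y\rangle)$ with $f(t)=(1-t)^{p/2}$. Using the addition formula for spherical harmonics
\begin{equation*}
\sum_{l=1}^{Z(d,m)} Y^m_l(x)\overline{Y^m_l(y)} = \frac{Z(d,m)}{C_m^{\lambda}(1)}\, C_m^{\lambda}(\langle x,y\rangle),\qquad \lambda=\tfrac{d-2}{2},
\end{equation*}
together with the usual integration-in-polar-coordinates identity, I would express \eqref{eq:ft coeff} as
\begin{equation*}
a_m(p,\S^{d-1}) = \frac{\Gamma(\frac{d}{2})}{\sqrt{\pi}\,\Gamma(\frac{d-1}{2})}\int_{-1}^{1}(1-t)^{p/2}\,\frac{C_m^{\lambda}(t)}{C_m^{\lambda}(1)}\,(1-t^2)^{(d-3)/2}\,{\rm d}t.
\end{equation*}

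\textbf{Step 2 (evaluation of the Gegenbauer integral).} For $\Real(p)>0$ the integrand is bounded, so one can insert the Rodrigues formula for $C_m^{\lambda}$ and integrate by parts $m$ times; the boundary terms vanish because of the factor $(1-t^2)^{(d-3)/2}$. What remains is a beta-type integral of the form $\int_{-1}^{1}(1-t)^{p/2-m}(1-t^2)^{\lambda+m-1/2}\,{\rm d}t$, which evaluates via the standard beta function to a ratio of gamma functions. Collecting the prefactors produced by Rodrigues, the normalization $C_m^{\lambda}(1)$, and the Pochhammer symbols yields exactly the closed form \eqref{eq:una}. This is the content of Baxter's proof and is the one place where a nontrivial computation is hidden; it should be routine but careful bookkeeping will be the main obstacle.

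\textbf{Step 3 (analytic continuation in $p$).} Both sides of \eqref{eq:una} are meromorphic functions of the complex variable $p$. For the left-hand side, the iterated integral \eqref{eq:ft coeff} is absolutely convergent whenever $\Real(p)>-(d-1)$, because $\|x-y\|^p$ has an integrable singularity of order $p$ on $\S^{d-1}$ (which has dimension $d-1$) and the spherical harmonics are bounded; dominated convergence then shows that $p\mapsto a_m(p,\S^{d-1})$ is holomorphic on the half-plane $\Real(p)>-(d-1)$. The right-hand side of \eqref{eq:una} is manifestly meromorphic in $p$, with apparent poles only coming from $\Gamma(-p/2)^{-1}=0$ at $p\in 2\NZ$ (where the factor simply vanishes, producing termination of the series) and from $\Gamma(-p/2+m)$, whose poles are cancelled. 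Since the two sides agree on $p>0$ by Step~2, the identity theorem extends the equality to the entire range $p>-(d-1)$.

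\textbf{Step 4 (asymptotics).} Stirling's formula yields $\Gamma(-p/2+m)/\Gamma(p/2+d-1+m)=m^{-(p+d-1)}(1+o(1))$ as $m\to\infty$, giving \eqref{eq:as sphe}. When $p\in 2\N$, the factor $\Gamma(-p/2)^{-1}=0$ kills all sufficiently large $m$, so the expansion \eqref{eq:expans sphere uni} terminates, as claimed.
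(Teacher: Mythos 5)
Your proposal is correct and coincides with the route the paper indicates: it cites Baxter's Funk--Hecke/Gegenbauer computation for $p>0$, then notes that ``simple continuation arguments cover the full range of $p$'' and that the Stirling asymptotics $\Gamma(-\tfrac p2+m)/\Gamma(\tfrac p2+d-1+m)=m^{-(p+d-1)}(1+o(1))$ are standard. Your Steps 1--2 reproduce Baxter's argument, Step 3 supplies the continuation, and Step 4 is the Stirling estimate, so the two proofs are essentially identical in structure.
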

For $p\in2\N$, the term $\Gamma(-\frac{p}{2})$ is not well-defined and \eqref{eq:una} is to be understood with the convention $\frac{\Gamma(-\frac{p}{2}+m)}{\Gamma(-\frac{p}{2})}=(-\frac{p}{2})_m$. Hence, we observe $a_m(p,\S^{d-1})  = 0$ for all $m > p/2$ if  $p\in 2\N$. 

It is noteworthy that the kernel $K_{d,r}$ in \eqref{eq:kern nur mit r} for $d=3$ is a discrepancy kernel that does not generate a Sobolev space on $\R^d$ but its restriction does. The proof of the following proposition is presented 
in Appendix~\ref{sec:A3}.
\begin{proposition}\label{prop:010}
Let $r\geq 1$. The reproducing kernel Hilbert space of $K_{3,r}$, given by \eqref{eq:kern nur mit r} with $d=3$, is continuously embedded into $\mathbb{H}^{2}(\R^3)$, but the reverse embedding does not hold. In contrast, $K_{3,r}|_{\S^{2}\times\S^{2}}$ reproduces $\mathbb{H}^{\frac{3}{2}}(\S^{2})$ with an equivalent norm.
\end{proposition}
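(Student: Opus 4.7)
The plan splits into a Fourier-transform analysis on $\R^3$ and a spherical-harmonic analysis on $\S^2$.

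\emph{On $\R^3$.} The kernel is translation invariant, $K_{3,r}(x,y) = k(x-y)$, with $k = \vol(\mathbb{B}^3_{r/2})^{-1}\, \chi_{\mathbb{B}^3_{r/2}} * \chi_{\mathbb{B}^3_{r/2}}$ the normalized autocorrelation of a ball. By the convolution theorem
\[\hat k(\xi) = \vol(\mathbb{B}^3_{r/2})^{-1}\, |\hat\chi_{\mathbb{B}^3_{r/2}}(\xi)|^2.\]
Together with the classical formula $\hat\chi_{\mathbb{B}^3_R}(\xi) \propto |\xi|^{-3/2}\, J_{3/2}(2\pi R|\xi|)$ and the bound $|J_{3/2}(t)| \lesssim t^{-1/2}$, this yields $\hat k(\xi) \lesssim (1+|\xi|^2)^{-2}$. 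Since the RKHS norm is $\|f\|_{\mathcal H}^2 = \int |\hat f|^2/\hat k\, d\xi$, the upper bound on $\hat k$ implies $\|f\|_{\mathcal H} \gtrsim \|f\|_{\mathbb{H}^2}$ and hence $\mathcal H \hookrightarrow \mathbb{H}^2(\R^3)$ continuously. For the failure of the reverse embedding I use that $J_{3/2}$ has infinitely many positive zeros, so $\hat k$ vanishes to second order on a sequence of spheres $\{|\xi| = \xi_j\}_{j\geq 1}$; then $1/\hat k$ is not locally integrable in any tubular neighborhood of such a sphere, and a Schwartz function $f$ whose Fourier transform is a smooth bump supported near and bounded below on one of them satisfies $f \in \mathbb{H}^2(\R^3)$ but $\|f\|_{\mathcal H} = \infty$.

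\emph{On $\S^2$.} The elementary intersection-volume formula for two balls of radius $r/2$ whose centers lie at distance $t$ gives
\[K_{3,r}|_{\S^2 \times \S^2}(x,y) = \frac{(r - \|x-y\|)_+^2\, (2r + \|x-y\|)}{2r^3}.\]
For $r \geq 2$ the cutoff is inactive ($\|x-y\| \leq 2 \leq r$), so the kernel reduces to the polynomial $1 - \frac{3}{2r}\|x-y\| + \frac{1}{2r^3}\|x-y\|^3$, and Proposition~\ref{prop:sphere unt} applied at $p = 1$ and $p = 3$ furnishes
\[c_m = -\frac{3\sqrt{2}}{2r}\, a_m(1,\S^2) + \frac{\sqrt{2}}{r^3}\, a_m(3,\S^2),\qquad m \geq 1.\]
Since $a_m(1,\S^2) < 0$ for every $m$ (visible from the explicit Gamma-ratio form), the $p=1$ contribution is strictly positive of order $m^{-3}$ and dominates the $O(m^{-5})$ correction, so $c_m \sim m^{-3}$ and $c_m > 0$ for all $m$. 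For $1 \leq r < 2$ the kernel is piecewise polynomial with a $C^1$ break at $t = r$ (jump in the second derivative) together with the usual $\sqrt{1-\langle x,y\rangle}$ singularity at $t = 0$; the Funk--Hecke formula combined with integration by parts converts both irregularities into the same $c_m \sim m^{-3}$ decay, while positive definiteness of the restricted kernel and non-degeneracy of the leading term enforce $c_m > 0$. Matching with the reproducing kernel \eqref{eq:native kernel sphere} at $\tau = 3/2$ yields the claimed norm equivalence with $\mathbb{H}^{3/2}(\S^2)$.

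The principal obstacle lies in the sub-regime $1 \leq r < 2$ on $\S^2$: the kernel is no longer polynomial and Proposition~\ref{prop:sphere unt} does not apply directly, so the sharp $m^{-3}$ asymptotic must be extracted from the $C^{1,1}$ piecewise regularity via a careful Funk--Hecke integration-by-parts argument. A secondary difficulty is the strict positivity of the finitely many small-$m$ coefficients, which is not automatic from positive semidefiniteness and requires either direct computation or a continuity argument in the parameter $r$.
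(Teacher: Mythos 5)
Your $\R^3$ argument (autocorrelation of a ball, Bessel decay of $\hat k$, zeros of $J_{3/2}$ for the failure of the reverse embedding) is exactly the paper's, and your fleshing out of the local non-integrability of $1/\hat k$ near a zero sphere is a useful elaboration of the paper's one-line remark.

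On $\S^2$ your approach is again the paper's in spirit — identify the restricted kernel as a function of $\|x-y\|$ alone and feed its monomial pieces into Proposition~\ref{prop:sphere unt} — but you and the paper disagree on the formula, and it is worth being explicit about why. Starting from \eqref{eq:kern nur mit r} literally, the kernel is the intersection volume of two balls of radius $r/2$, normalized by $\vol(\mathbb{B}^3_{r/2})$; that gives your $\tfrac{(r-\|x-y\|)_+^2(2r+\|x-y\|)}{2r^3}$, with a cutoff at $\|x-y\|=r$. The paper's proof instead writes $K_{3,r}(x,y)=\pi(2r-\|x-y\|)^2(\|x-y\|+4r)/12$, which is the \emph{unnormalized} intersection volume of balls of radius~$r$ — both the prefactor (it does not equal $1$ on the diagonal) and the cutoff (at $\|x-y\|=2r$ rather than $r$) are inconsistent with \eqref{eq:kern nur mit r}. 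Under the paper's (mistyped) formula the restricted kernel is a degree-$3$ polynomial in $\|x-y\|$ for every $r\ge1$, and that is the only reason the paper never splits into cases. Your formula is the correct one per the definition, your $r\ge2$ analysis (including the positivity of $c_m$ via the sign of $a_m(1,\S^2)$) is sound and actually more detailed than what the paper provides, and your instinct to handle $1\le r<2$ separately is right.

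The genuine gap in your proposal is the $1\le r<2$ regime, where you sketch but do not carry out the Funk--Hecke argument. The intuition is correct — the $C^{1,1}$ break of $K$ at $\|x-y\|=r$ sits at an interior value $u_0=1-r^2/2\in(-1,1)$ of $u=\langle x,y\rangle$, and a jump in the second derivative at an interior point contributes $O(m^{-7/2})$ to the Legendre coefficients (two integrations by parts against the Legendre ODE, combined with the $m^{-1/2}$ interior amplitude of $P_m$), which is subdominant to the $m^{-3}$ rate produced by the square-root singularity at $u=1$ — but this has to be proved, not asserted. A cleaner route that avoids the Funk--Hecke estimate entirely: write the kernel as the polynomial $1-\tfrac{3}{2r}\|x-y\|+\tfrac{1}{2r^3}\|x-y\|^3$ minus the remainder $R(\|x-y\|)=\bigl(1-\tfrac{3}{2r}t+\tfrac{1}{2r^3}t^3\bigr)\mathbf 1_{\{t>r\}}$, note that $R$ is smooth on $\S^2$ away from the single great-circle sphere $\{t=r\}$, and use that $R\in C^{1,1}$ as a zonal function so its Legendre coefficients decay at least like $m^{-7/2}$ (or, more bluntly, that $R\in\mathbb H^{5/2-\varepsilon}(\S^2)$ for all $\varepsilon>0$ and the $\mathbb H^{3/2}$ coefficients therefore come entirely from the polynomial part). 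You also correctly flag that strict positivity of the finitely many low-order $c_m$ is not automatic from positive semidefiniteness; for $r\ge 2$ you can verify it from the explicit $p=1$ and $p=3$ formulas as you indicate, and for $1\le r<2$ one can argue that $c_m>0$ for all $m$ by noting that the $m=0,1$ coefficients are elementary integrals one can evaluate, and that the $m\ge2$ coefficients inherit positivity from the dominant $-\tfrac{3\sqrt2}{2r}a_m(1,\S^2)>0$ term once the $O(m^{-7/2})$ remainder is under control.
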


%\subsection{Numerical examples on $\S^2$}
To provide numerical examples for $d=3$, Proposition~\ref{prop:sphere unt} provides the coefficients $(a_m)_{m=0}^\infty$ in the kernel expansion of $K_{\beta_{3}}$,
\begin{equation*}
1-\frac{1}{4}\|x-y\|= \sum_{m=0}^\infty a_m \sum_{l=1}^{2m+1}Y^m_{l}(x)\overline{Y^m_{l}(y)},\quad x,y\in\S^2.
\end{equation*}
For $\supp(\mu),\supp(\nu_n)\subset \S^2$, the $L_2$-discrepancy \eqref{eq:fund eq min} for $K_{\beta_{3}}$ with $\mathbb{X}=\S^2$  becomes
\begin{equation}\label{eq:series truncatinga}
\mathscr{D}_{\beta_{3}}(\mu,\nu_n) = \sum_{m=0}^\infty a_m \sum_{l=1}^{2m+1}  \left|\hat{\mu}^m_{l}- \frac{1}{n}\sum_{j=1}^n\overline{Y^m_{l}(x_j)}\right|^2,
\end{equation}
where $\hat{\mu}^m_{l}$ denotes the Fourier coefficient of $\mu$ with respect to $Y^m_{l}$, cf.~\eqref{eq:fund eq min}.  By truncating this series, the nonequispaced fast Fourier transform on $\S^2$, cf.~\cite{Graf:2011lp,Keiner:2009cv,Kunis:2003pt}, enables efficient minimization of
\begin{equation}\label{eq:min sphere}
\sum_{m=0}^M a_m \sum_{l=1}^{2m+1}   \left|\hat{\mu}^m_{l}- \frac{1}{n}\sum_{j=1}^n\overline{Y^m_{l}(x_j)}\right|^2
\end{equation}
among all $n$-point sets $\{x_1,\ldots,x_n\}\subset\S^2$ for fixed $n$. We are most interested in $n\gg M$. 
See Figure~\ref{fig:S2}
for a numerical experiment with $M=8$ and $n=50$. 
\begin{figure}
\includegraphics[width=.4\textwidth]{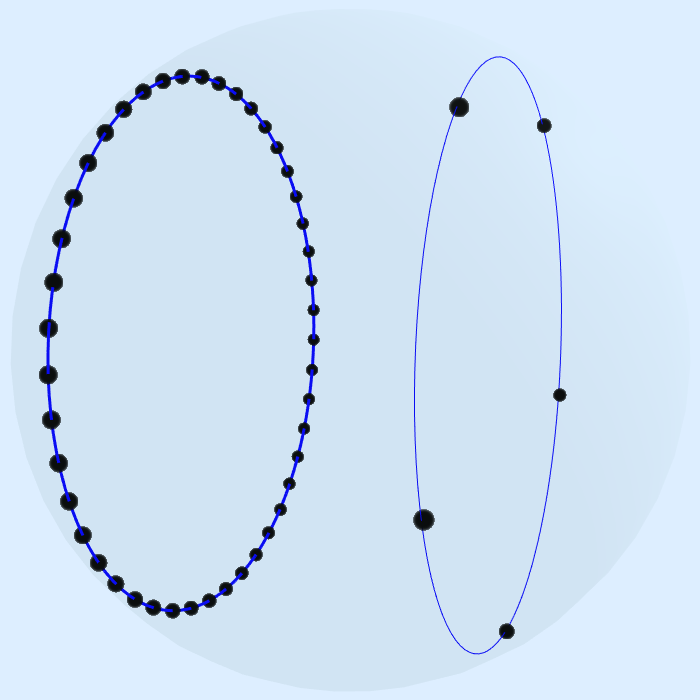}
\caption{The target measure $\mu$ is supported on two circles on the sphere $\S^2$ with weight ratio $9/1$. Numerical minimization of \eqref{eq:min sphere} splits $50$ points into $45$ points equally distributed on one and $5$ points on the other circle.}\label{fig:S2}
\end{figure}

\section{Discrepancy kernels on compact sets}\label{sec:b}
Here we discuss discrepancy kernels that extend the kernels of the previous section in a natural way. 
For $d\geq 1$, let us define the half-space 
\begin{equation*}
\Omega^d_r(z):=\{x\in \R^d : \langle z,x\rangle \geq r \}\in\mathscr{B}(\R^d),\quad z\in\S^{d-1},\quad r\in\R.
\end{equation*}
For fixed $s>0$, we consider the mapping
$ %\begin{equation*}
h:[-s,s]\times\S^{d-1}\rightarrow \mathscr{B}(\R^d)$ defined by $(r,z)\mapsto \Omega^d_r(z)
$ %\end{equation*}
and endow $[-s,s]$ with the Lebesgue measure ${\mathrm d}r$. The push-forward measure $\beta_{d,s}:=h_*({\mathrm d}r\otimes \sigma_{\S^{d-1}})$ leads to the associated $L_2$-discrepancy
\begin{equation*}%\label{eq:discrepancy for special beta half spaces}
\mathscr{D}_{\beta_{d,s}}(\mu,\nu_n)=\int_{-s}^{s} \int_{\S^{d-1}} \left| \mu(\Omega^d_r(z)) -  \nu_n(\Omega^d_r(z))\right|^2{\rm d} \sigma_{\S^{d-1}}(z) {\rm d} r.
\end{equation*}
The associated \emph{discrepancy kernel} is
\begin{equation}\label{eq:K disc}
K_{\beta_{d,s}}(x,y)=\int_{-s}^{s} \int_{\S^{d-1}} \delta_x(\Omega^d_r(z))\delta_y(\Omega^d_r(z)) {\rm d} \sigma_{\S^{d-1}}(z) {\rm d} r,\qquad x,y\in\R^d.
\end{equation}
Since $B^{\S^{d-1}}_r(z) = \Omega^d_{\cos(r)}(z)\cap \S^{d-1}$, for $r\in[0,\pi]$ and $z\in\S^{d-1}$, 
we deduce 
\begin{equation*}
K_{\beta_d}|_{\S^{d-1}\times\S^{d-1}}=K_{\beta_{d,1}}|_{\S^{d-1}\times\S^{d-1}},\qquad d\geq 2,
\end{equation*} 
with $K_{\beta_d}$ as in \eqref{eq:def K new}. 
In contrast to $K_{\beta_d}$, the kernel $K_{\beta_{d,s}}$ is not identically zero outside of $\S^{d-1}\times\S^{d-1}$ and makes also sense for $d=1$. 
\begin{example}\label{ex:d=1}
For $d=1$, we have $\S^0=\{\pm 1\}$, so that the half-spaces are $\Omega^1_r(1)=[r,\infty)$ and $\Omega^1_r(-1)=(-\infty,-r]$. 
Direct calculation of \eqref{eq:K disc} yields 
\begin{equation*}
K_{\beta_{1,s}}(x,y)= \begin{cases}
s-\frac{1}{2}|x-y|,& |x|,|y|\leq s,\\
\frac{s}{2}+\frac{xy}{2|y|},& |x|\leq s\leq |y|,\\
s\Heavy(xy), & s\leq |x|,|y|,
\end{cases}
\end{equation*}
%with $K_{\beta_{1,s}}(y,x)=K_{\beta_{1,s}}(x,y)$, for $x,y\in\R$, and 
where $\Heavy$ is the 
Heaviside step function. % see also Figure \ref{fig:example}. 
%\begin{figure}
%%\includegraphics[width=.4\textwidth]{exampleKernel_2.png}
%\includegraphics[width=.5\textwidth]{exampleKernel_3.png}
%\caption{$K_{\beta_{1,1}}(x,y)$ for $x,y\in [-2,2]$.}\label{fig:example}
%\end{figure}
\end{example}
%We now provide the Fourier expansion of the restriction $K_{\beta_{1,s}}|_{[-s,s]\times[-s,s]}$:
\begin{proposition}\label{prop:interval}
The Fourier expansion of the kernel $K_{\beta_{1,s}}|_{[-s,s]\times[-s,s]}$ with respect to the Lebesgue measure $\sigma_{[-s,s]}$ on $[-s,s]$ is 
\begin{multline*}
K_{\beta_{1,s}}(x,y)   = \sum_{\substack{m\in\N\\ m \text{ odd}}} \frac{4}{m^2\pi^2}\cdot\frac{1}{s} \cdot\sin(\frac{\pi}{2s}mx)\sin(\frac{\pi}{2s}my) \\
 + \hspace{-.5cm}\sum_{\{u>0 \,:\,\tan(u)=\frac{1}{u}\} }  \frac{1}{u^2}\cdot\frac{1}{s(\sin(u)^2+1)}\cdot \cos(\frac{u}{s}x)\cos(\frac{u}{s}y),\quad x,y\in[-s,s].
\end{multline*}
Its reproducing kernel Hilbert space is
\begin{equation*}
\mathscr{H}_{K_{\beta_{1,s}}}([-s,s])=\{f:[-s,s]\rightarrow \C \;:\; f \text{ is absolutely continuous, } f'\in L_2([-s,s])\},
\end{equation*}
where the inner product between $f$ and $g$ is given by 
\begin{equation*}
\frac{1}{2s}\big(f(-s)+f(s)\big)\big(\overline{g(-s)+g(s)}   \big)+\langle f',g'\rangle_{L_2([-s,s])}.
\end{equation*}
\end{proposition}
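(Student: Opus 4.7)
From Example~\ref{ex:d=1}, on $[-s,s]^2$ the kernel simplifies to $K_{\beta_{1,s}}(x,y) = s - \tfrac12|x-y|$. This restriction is continuous, symmetric, and positive semidefinite (as a discrepancy kernel), so Mercer's theorem reduces the Fourier expansion to finding an $L_2([-s,s])$-orthonormal system of eigenfunctions of the Hilbert--Schmidt operator
$$
T_K\phi(x)=\int_{-s}^s K_{\beta_{1,s}}(x,y)\phi(y)\,\mathrm{d}y.
$$
The plan is first to solve this eigenvalue problem explicitly, and then to identify the RKHS by verifying the reproducing property by hand.

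For the spectral step, differentiating $T_K\phi=\lambda\phi$ twice in $x$ (noting that $\partial_x^2\bigl(-\tfrac12|x-y|\bigr)=-\delta(x-y)$) yields the ODE $\lambda\phi''(x)=-\phi(x)$, so every eigenfunction is a linear combination of $\cos(\omega x)$ and $\sin(\omega x)$ with $\lambda=1/\omega^2$. Since $K_{\beta_{1,s}}(x,y)=K_{\beta_{1,s}}(-x,-y)$, the eigenspaces split into even and odd modes. For an odd mode $\phi(x)=\sin(\omega x)$ we have $\int_{-s}^s\phi=0$, so the boundary condition $(T_K\phi)'(s)=-\tfrac12\int_{-s}^s\phi\,\mathrm{d}y=0$ combined with $(T_K\phi)'(s)=\lambda\phi'(s)$ forces $\cos(\omega s)=0$, i.e.\ $\omega_m=m\pi/(2s)$ for odd $m\in\N$; a short computation gives the normalization $\int_{-s}^s\sin^2(\omega_m x)\,\mathrm{d}x=s$. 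For an even mode $\phi(x)=\cos(\omega x)$, evaluating $T_K\phi(s)=\lambda\phi(s)$ yields $\cos(\omega s)=s\omega\sin(\omega s)$, i.e.\ the transcendental equation $\tan u=1/u$ with $u:=s\omega$. Using $\cos u=u\sin u$ I will then reduce the normalization to $\int_{-s}^s\cos^2(ux/s)\,\mathrm{d}x = s(1+\sin^2 u)$. Injectivity of $T_K$ (if $T_K\phi\equiv 0$ then $\phi\equiv 0$ by the same ODE argument) ensures the eigensystem is complete in $L_2([-s,s])$, and Mercer's theorem then produces the claimed Fourier series.

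To identify the RKHS, let $\mathscr{H}$ denote the space of absolutely continuous $f\colon[-s,s]\to\C$ with $f'\in L_2([-s,s])$, equipped with the stated inner product. The map $\mathscr{H}\ni f\mapsto\bigl((2s)^{-1/2}(f(-s)+f(s)),\,f'\bigr)\in\C\oplus L_2([-s,s])$ is an isometric isomorphism, hence $\mathscr{H}$ is a Hilbert space. For $y\in[-s,s]$ set $K_y(x):=K_{\beta_{1,s}}(x,y)=s-|x-y|/2$; then $K_y(-s)+K_y(s)=s$ and $K_y'(x)=-\tfrac12\sgn(x-y)\in L_2$, so $K_y\in\mathscr{H}$. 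Splitting the integral $\langle K_y',f'\rangle_{L_2}$ at $x=y$ and integrating by parts, the boundary contributions at $\pm s$ combine with the first summand of the inner product to leave $\langle K_y,f\rangle_{\mathscr{H}}=\overline{f(y)}$. By the Moore--Aronszajn theorem this identifies $\mathscr{H}$ with the RKHS of $K_{\beta_{1,s}}|_{[-s,s]\times[-s,s]}$.

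The main obstacle is the even-mode problem: the eigenvalues are only given implicitly by $\tan u=1/u$, and the normalization constant $s(1+\sin^2 u)$ must be extracted by exploiting this transcendental relation inside $\int\cos^2$; the reproducing-property verification is a clean but sign-sensitive integration by parts that can easily go astray if the two boundary pieces $K_y(\pm s)$ are not tracked carefully.
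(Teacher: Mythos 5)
Your plan is correct and follows essentially the same route as the paper's proof: convert $T_K\phi=\lambda\phi$ into the second-order ODE $\phi''=-\phi/\lambda$, derive the two boundary conditions $\phi'(s)=-\phi'(-s)$ and $\phi(s)+\phi(-s)=-2s\phi'(s)$ from the integral equation, solve for the even and odd modes, normalize, and invoke Mercer's theorem; the paper does exactly this (writing it out for $s=1$ and rescaling) and then refers to the literature for the RKHS identification. Your version is somewhat more explicit — the even/odd splitting, the normalization identity $\int_{-s}^s\cos^2(ux/s)\,\mathrm{d}x=s(1+\sin^2 u)$ extracted from $\cos u=u\sin u$, the isometric isomorphism $\mathscr H\cong\C\oplus L_2([-s,s])$, and the direct integration-by-parts verification of the reproducing property are all correct and fill in details the paper leaves to ``direct calculations'' and ``computations analogous to [references]'' — but there is no methodological difference.
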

Note that $\mathscr{H}_{K_{\beta_{1,s}}}([-s,s])$ is continuously embedded into $\mathbb{H}^1([-s,s])$. The proof of Proposition~\ref{prop:interval} is presented 
in Appendix~\ref{sec:A4}.
It uses that, 
%\begin{remark}\label{remark:Sturms}
up to a constant, $K_{\beta_{1,s}}|_{[-s,s]\times[-s,s]}$ is the Green's function of the $1$-dimensional harmonic equation $\Delta u = f$ on $[-s,s]$ with the boundary conditions $u'(s) = - u'(-s)$ and $u(s) + u(-s) = -2u'(s)$. 
%\end{remark}

It turns out that $K_{\beta_{d,s}}$ has a simple form on $\mathbb{B}^d_{s}\times \mathbb{B}^d_s$.
\begin{thm}\label{thm:compact set}
For $d\geq 1$, the discrepancy kernel $K_{\beta_{d,s}}$ satisfies
\begin{equation}\label{eq:dist is rkh}
K_{\beta_{d,s}}(x,y) = s-\frac{\Gamma(\frac{d}{2})}{2\sqrt{\pi}\Gamma(\frac{d+1}{2})}\|x-y\|,\qquad x,y\in \mathbb{B}^d_{s}.
\end{equation}
\end{thm}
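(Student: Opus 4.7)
The plan is to evaluate the defining double integral directly, exploiting the simple geometric description of $\Omega^d_r(z)$.

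First, observe that $\delta_x(\Omega^d_r(z))\,\delta_y(\Omega^d_r(z))=1$ precisely when both inner products $\langle z,x\rangle$ and $\langle z,y\rangle$ are at least $r$, i.e.\ when $r\leq\min(\langle z,x\rangle,\langle z,y\rangle)$. For $x,y\in\mathbb{B}^d_s$ and $z\in\mathbb{S}^{d-1}$ we have $|\langle z,x\rangle|,|\langle z,y\rangle|\leq s$, so their minimum lies in $[-s,s]$ and the inner $r$-integral equals
\begin{equation*}
\int_{-s}^{s}\mathbf{1}_{\{r\leq\min(\langle z,x\rangle,\langle z,y\rangle)\}}\,\mathrm dr \;=\; s+\min(\langle z,x\rangle,\langle z,y\rangle).
\end{equation*}
Hence
\begin{equation*}
K_{\beta_{d,s}}(x,y) \;=\; s + \int_{\mathbb{S}^{d-1}}\min(\langle z,x\rangle,\langle z,y\rangle)\,\mathrm d\sigma_{\mathbb{S}^{d-1}}(z).
\end{equation*}

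Next, rewrite $\min(a,b)=\tfrac{1}{2}(a+b-|a-b|)$ inside the integral. The linear part
\begin{equation*}
\tfrac{1}{2}\int_{\mathbb{S}^{d-1}}\bigl(\langle z,x\rangle+\langle z,y\rangle\bigr)\,\mathrm d\sigma_{\mathbb{S}^{d-1}}(z)
\end{equation*}
vanishes by the antipodal symmetry of the normalized surface measure. What remains is
\begin{equation*}
K_{\beta_{d,s}}(x,y) \;=\; s - \tfrac{1}{2}\int_{\mathbb{S}^{d-1}}\bigl|\langle z,x-y\rangle\bigr|\,\mathrm d\sigma_{\mathbb{S}^{d-1}}(z).
\end{equation*}

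For $x\neq y$, rotational invariance of $\sigma_{\mathbb{S}^{d-1}}$ allows us to substitute a rotation mapping $(x-y)/\|x-y\|$ to a fixed unit vector $e_1$, giving
\begin{equation*}
\int_{\mathbb{S}^{d-1}}\bigl|\langle z,x-y\rangle\bigr|\,\mathrm d\sigma_{\mathbb{S}^{d-1}}(z) \;=\; \|x-y\|\int_{\mathbb{S}^{d-1}}|z_1|\,\mathrm d\sigma_{\mathbb{S}^{d-1}}(z),
\end{equation*}
and the $x=y$ case is trivial. A standard Beta-function computation (expressing $z_1$ via a Beta$(1/2,(d-1)/2)$-distributed coordinate, or invoking the well-known absolute moment formula on the sphere) yields
\begin{equation*}
\int_{\mathbb{S}^{d-1}}|z_1|\,\mathrm d\sigma_{\mathbb{S}^{d-1}}(z) \;=\; \frac{\Gamma(\tfrac{d}{2})}{\sqrt{\pi}\,\Gamma(\tfrac{d+1}{2})}.
\end{equation*}
Substituting this back gives the claimed identity. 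No step is truly an obstacle; the only subtlety is the bound $|\langle z,x\rangle|\leq s$ on the ball, which is exactly what makes the $r$-integral clip to the minimum rather than to $s$, and hence what restricts the clean formula to $\mathbb{B}^d_s\times\mathbb{B}^d_s$.
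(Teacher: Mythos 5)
Your proof is correct, and it follows the route the paper has in mind when it says the identity for $\S^{d-1}$ from Brauchart--Dick (2013) ``still works'' in the more general situation: compute the inner $r$-integral to get $s+\min(\langle z,x\rangle,\langle z,y\rangle)$, split $\min$ into linear-plus-absolute-value, kill the linear part by antipodal symmetry, and evaluate $\int_{\S^{d-1}}|z_1|\,{\rm d}\sigma_{\S^{d-1}}=\frac{\Gamma(d/2)}{\sqrt{\pi}\,\Gamma((d+1)/2)}$. You correctly identify the role of the hypothesis $x,y\in\mathbb{B}^d_s$ (so that $|\langle z,x\rangle|\le s$, making the indicator integral clip to the minimum rather than to $s$), and the formula you use for the spherical moment is valid for $d=1$ as well (where $\S^0=\{\pm1\}$ with the two-point uniform measure and the moment equals $1$), matching the paper's Example~\ref{ex:d=1}.
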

The identity \eqref{eq:dist is rkh} for $x,y\in\S^{d-1}$ with $s=1$ has been established in \cite{Brauchart:2013pt}, see also \eqref{eq:K1 formula}. Essentially, 
the same proof still works for the more general situation. 
%\begin{remark}
%For $d=3$, one can compute $K_{\beta_{3,r_0}}(x,x) ={r_0}$, for all $x\in\R^3$, which goes along with \eqref{eq:dist is rkh}. However, direct computations yield
%\begin{equation*}
%K_{\beta_{3,r_0}}(x,0) = \frac{{r_0}}{2}+\frac{{r_0}^2}{4\|x\|}, \qquad x\in\R^3\setminus \mathbb{B}^3_{r_0}(0),
%\end{equation*}
%so that the explicit formula \eqref{eq:dist is rkh} does not hold for all $x,y\in\R^d$.
%\end{remark}
Theorem~\ref{thm:compact set} provides a simple form of $K_{\beta_{d,s}}|_{\X\times\X}$ with $\X= \mathbb{B}^d_{s}$, which may facilitate further computations. An immediate consequence is $\mathscr{D}_{\beta_{d,s}}(\delta_x,\delta_y)=\frac{\Gamma(\frac{d}{2})}{\sqrt{\pi}\Gamma(\frac{d+1}{2})}\|x-y\|$, for $x,y\in \mathbb{B}^d_{s}$. 

%the computation of the restricted kernel's Fourier expansion.
%For $d=1$, Proposition \ref{prop:interval} fully covers $K_{\beta_{d,s}}|_{\mathbb{B}^d_{s}\times\mathbb{B}^d_{s}}$. The spectral decomposition for odd $d$ is derived in the next section. 

%
%
%the spectral decomposition of $K_{\beta_{d,s}}|_{\mathbb{B}^d_{s}\times\mathbb{B}^d_{s}}$ complicated and is quite involved and postponed to Section \ref{sec:ball}. Here, we shall proceed with the smaller subsets $\SO(3)$ and $\G_{2,4}$.
%

\section{The Euclidean ball $\mathbb{B}^d$}\label{sec:ball}
This section is dedicated to derive the Fourier expansion of the discrepancy kernel $K_{\beta_{d,1}}$ in \eqref{eq:K disc} on $\mathbb{B}^d$. Proposition~\ref{prop:interval} has covered $d=1$, and we now derive the spectral decomposition of 
\begin{equation*}
K^{d,p}:\mathbb{B}^d\times \mathbb{B}^d \rightarrow \R,\qquad (x,y)\mapsto \|x-y\|^p,
\end{equation*}
for all odd $d$ with odd $p>1-d$ with respect to the Lebesgue measure $\sigma_{\mathbb{B}^d}$ on $\mathbb{B}^d$. The case $d=3$ with $p=-1$ is discussed in \cite{Kalmenov:2011to}.

Let $\{\mathcal{C}^{\alpha}_m: m\in\N,\; \alpha>-1/2\}$ denote the family of Gegenbauer polynomials with the standard normalization 
\begin{equation*}
  \mathcal{C}^{\alpha}_m(1) =\binom{m+2\alpha-1}{m}= \frac{\Gamma(m+2\alpha)}{\Gamma(2\alpha)\Gamma(m+1)},\quad \alpha\neq 0.
\end{equation*}
By $\alpha=\frac{d}{2}-1$, the addition theorem for spherical harmonics yields
\begin{equation}\label{eq:C into Y}
 \sum_{l=1}^{Z(d,m)} Y^m_{l}(x)\overline{Y^m_{l}(y)} 
  %= \frac{Z(d,m)}{\mathcal{C}_m^{\frac{d}{2}-1}(1)}\mathcal{C}^{\frac{d}{2}-1}_m(\langle x,y\rangle)
  =  \tfrac{2m+d-2}{d-2} \mathcal{C}^{\frac{d}{2}-1}_m(\langle x,y\rangle),\quad x,y\in\S^{d-1}.
\end{equation}
For $m\in\N$, let us define the kernels $\mathcal{K}^{d,p}_m:[0,1]\times[0,1]\rightarrow \R$,
\begin{equation}\label{eq:kernel K}
%\mathcal{K}^{d,p}_m(r,s) = c^{d,p}_{m}\Big(\frac{s}{r}\Big)^n  \pFq{2}{1}{\frac{1}{2}-\frac{d}{2},m-\frac{1}{2}}{m+\frac{d}{2}}{\Big(\frac{s}{r}\Big)^2},\quad r\geq s,
 \mathcal K_m^{d,p}(r,s) 
% &:= \frac{(-\frac p2)_m}{(\frac d2 -1 )_m} (r^2+s^2)^{\frac p2 - m} (r s)^m \;_2F_1\left( \tfrac m2-\tfrac p4, \tfrac{m+1}{2} -\tfrac p4; m+\tfrac{d}{2}; 4 \Big(\frac{sr}{s^2+r^2} \Big)^2 \right),\\
  = \! \frac{(-\frac p2)_m}{(\frac d2 -1 )_m} \left(\tfrac{\min(r,s)}{\max(r,s)}\right)^m \!\!\max(r,s)^p \pFq{2}{1}{m-\frac{p}{2},1-\frac{d+p}{2}}{m+\frac{d}{2}}{\left(\tfrac{\min(r,s)}{\max(r,s)}\right)^2}.
\end{equation}
%\begin{proposition}\label{prop:Kn}
For $d\geq 3$ and arbitrary real $p>1-d$, we deduce from \cite{Cohl:2013ao} that 
%the following decomposition into radial and spherical components holdswe have 
\begin{equation}\label{eq:exp of Kn}
\|x-y\|^p = \sum_{m=0}^\infty \mathcal{K}^{d,p}_m(\|x\|,\|y\|) \,\mathcal{C}^{\frac{d}{2}-1}_m\big(\big\langle \tfrac{x}{\|x\|},\tfrac{y}{\|y\|}\big\rangle\big),\quad x,y\in\mathbb{B}^d,\; (x\ne y \text{ if } p < 0).
\end{equation}
%\end{proposition}
For $x=0$ or $y=0$, the 
right-hand side of \eqref{eq:exp of Kn} is well-defined by 
analytic continuation. 
%The proof is on Page \pageref{proof:kn ding} in the appendix. 

Using the addition formula \eqref{eq:C into Y} we obtain
\begin{equation*}\label{eq:exp of Kn_add}
K^{d,p}(x,y) = \sum_{m=0}^\infty  \tfrac{d-2}{2m+d-2} \mathcal{K}^{d,p}_m(\|x\|,\|y\|) \sum_{l=1}^{Z(d,m)}Y^{m}_{l}(\tfrac{x}{\|x\|}) Y^{m}_{l}(\tfrac{y}{\|y\|}),\quad x,y\in\mathbb{B}^d.
\end{equation*}
The Fourier expansion of $\mathcal K_{m}^{d,p}$ with respect to the measure $r^{d-1} \mathrm dr$ satisfies 
\begin{equation}
  \label{eq:spec_KM}
  \mathcal K_{m}^{d,p}(r,s) = \sum_{j=1}^{\infty} \lambda_{m,j}^{d,p} \varphi_{m,j}^{d,p}(r) \varphi_{m,j}^{d,p}(s),
  \qquad \int_{0}^{1} \mathcal K_{m}^{d,p}(r,s) \varphi_{m,j}^{d,p}(r)r^{d-1} \mathrm d r = \lambda_{m,j}^{d,p} \varphi^{d,p}_{m,j}(s),
\end{equation}
where $\int_{0}^{1} |\varphi_{m,j}^{d,p}(r)|^{2} r^{d-1}\mathrm dr = 1$. Then, by setting
\begin{equation*}
\varphi_{m',j',l'}^{d,p}(x):= \varphi_{m',j'}^{d,p}(\|x\|) Y^{m'}_{l'}(\tfrac{x}{\|x\|}),  \quad x \in \mathbb B^{d}, 
\end{equation*}
direct computations yield %we arrive at the relation
\begin{equation}\label{eq:exp of Kn_add_int}
T^{d,p} \varphi_{m',j',l'}^{d,p}(x)  = \lambda_{m',j'}^{d,p} \tfrac{(d-2) \mathrm{vol}(\mathbb S^{d-1})}{2m'+d-2} \varphi_{m',j',l'}^{d,p}(x)
\end{equation}
with the scaling $\int_{\mathbb B^{d}}\left| \varphi_{m',j',l'}^{d,p}(x)\right|^{2} \mathrm d x
%  =  \mathrm{vol}(\mathbb S^{d-1})  \int_{0}^{1} |\varphi_{m',j'}^{d,p}(r)|^{2} r^{d-1} \mathrm d r \int_{\mathbb S^{d-1}} |Y_{l'}^{m'}(z)|^{2}  \mathrm d \sigma_{d-1}(z)
  = \mathrm{vol}(\mathbb S^{d-1})$. 
%{\color{blue}
%\begin{equation}\label{eq:exp of Kn_add_int}
%  \begin{aligned}
%    & T^{d,p} \varphi_{m',j',l'}^{d,p}(x) \\
%    = &\int_{\mathbb B^{d}} K^{d,p}(x,y) \varphi_{m',j',l'}^{d,p}(y) \mathrm dy \\
%     = &\int_{\mathbb B^{d}} \sum_{m=0}^\infty  \frac{d-2}{2m+d-2} \mathcal{K}^{d,p}_m(\|x\|,\|y\|)  \varphi_{m',j'}^{d,p}(\|y\|) \sum_{l=1}^{Z(d,m)}Y^{m}_{l}(\tfrac{x}{\|x\|}) Y^{m}_{l}(\tfrac{y}{\|y\|}) Y^{m'}_{l'}(\tfrac{y}{\|y\|}) \mathrm dy\\
%     = &\sum_{m=0}^\infty   \frac{(d-2) \mathrm{vol}(\mathbb S^{d-1})}{2m+d-2}  \int_{0}^{1} \mathcal{K}^{d,p}_m(\|x\|,r) \varphi_{m',j'}^{d,p}(r) r^{d-1} \mathrm dr \sum_{l=1}^{Z(d,m)}Y^{m}_{l}(\tfrac{x}{\|x\|}) \int_{\mathbb S^{d-1}} Y^{m}_{l}(z) Y^{m'}_{l'}(z)  \mathrm d \sigma_{d-1}(z) \\
%     = & \frac{(d-2) \mathrm{vol}(\mathbb S^{d-1})}{2m'+d-2} Y^{m'}_{l'}(\tfrac{x}{\|x\|}) \int_{0}^{1} \mathcal{K}^{d,p}_{m'}(\|x\|,r) \varphi_{m',j'}^{d,p}(r) r^{d-1} \mathrm dr\\
%     = & \lambda_{m',j'}^{d,p} \frac{(d-2) \mathrm{vol}(\mathbb S^{d-1})}{2m'+d-2} \varphi_{m',j',l'}^{d,p}(x)
%\end{aligned}
%\end{equation}
%and
%\[
%  \int_{\mathbb B^{d}}| \varphi_{m',j',l'}^{d,p}(x)|^{2} \mathrm d x
%  =  \mathrm{vol}(\mathbb S^{d-1})  \int_{0}^{1} |\varphi_{m',j'}^{d,p}(r)|^{2} r^{d-1} \mathrm d r \int_{\mathbb S^{d-1}} |Y_{l'}^{m'}(z)|^{2}  \mathrm d \sigma_{d-1}(z)
%  = \mathrm{vol}(\mathbb S^{d-1}).
%\]
%}
This leads to the Fourier expansion
\begin{align}\label{eq:spec total}
K^{d,p} (x,y)
%&= \sum_{m=0}^\infty \sum_{j=1}^\infty\lambda^{d,p}_{m,j} \sum_{l=1}^{Z(d,m)}\phi^{d,p}_{m,j,l}(x) \phi^{d,p}_{m,j,l}(y)\\
& = \sum_{m=0}^\infty \sum_{j=1}^\infty \lambda_{m,j}^{d,p} \tfrac{(d-2)\mathrm{vol}(\mathbb S^{d-1})}{2m+d-2} \sum_{l=1}^{Z(d,m)}\frac{\varphi^{d,p}_{m,j,l}(x) }{\sqrt{\mathrm{vol}(\mathbb S^{d-1})} } \frac{\varphi^{d,p}_{m,j,l}(y)}{\sqrt{\mathrm{vol}(\mathbb S^{d-1})}} 
%\int_{\mathbb B^{d}} |\varphi_{m,j,l}^{d,p}(z)|^{2} \mathrm dz} 
,\quad x,y\in\mathbb{B}^d.
\end{align}
%The eigenfunctions of $K^{d,p}$ with nonzero eigenvalues $\lambda^{d,p}_{m,j}$ can then be chosen of the form 
%\begin{equation}\label{eq:eig special form}
%\phi^{d,p}_{m,j,l} (x)=\varphi^{d,p}_{m,j}(\|x\|) Y^m_l(\tfrac{x}{\|x\|}),\quad x\in\mathbb{B}^d,
%\end{equation}
%where $\varphi^{d,p}_{m,j}:[0,1]\rightarrow\R$ are the eigenfunctions of $\mathcal{K}^{d,p}_m$ with nonzero eigenvalue $\frac{d-2}{2m+d-2}\lambda^{d,p}_{m,j}$ with respect to the measure $r^{d-1}{\rm d}r$. 
Thus, the original problem is reduced to the spectral decomposition of the sequence of kernels $\mathcal{K}^{d,p}_m$, for $m\in\N$. The kernel $\mathcal{K}^{d,p}_m$ induces the integral operator
\begin{equation}\label{eq:def of Tm}
T_m^{d,p} : L_2([0,1],r^{d-1}{\rm d}r) \rightarrow \mathcal{C}([0,1]),\quad f\mapsto  \int_0^1 \mathcal{K}^{d,p}_m(\cdot,r)f(r)r^{d-1}{\rm d}r,
\end{equation}
with eigenvalues $\lambda^{d,p}_{m,j}$ and eigenfunctions $\varphi^{d,p}_{m,j}$. 
We now specify these eigenvalues and eigenfunctions, where $J_{\nu}$ denotes the Bessel function of the first kind of order $\nu$ and 
$\zeta_{k}:={\rm e}^{2\pi {\rm i}/k}$ is the $k$-th root of unity.
\begin{thm}\label{thm:ball ultimate}
Suppose that both $d\geq 3$ and $p>1-d$ are odd and let $m\in\N$. Then the following holds:
\begin{itemize}
\item[a)] Any eigenvalue $\lambda\neq 0$ of $T_m^{d,p}$ is in a one-to-one correspondence with 
\begin{equation}\label{eq:general solutions Dn2}
\omega =   \big|\lambda^{-1}2^{d+p-2}(d+2m-2)(-\tfrac p2)_{\frac{d+p}{2}-1}(\tfrac{d+p}{2}-1)!\big|^{\frac{1}{d+p}}
%\mathcal{J}^{d,\omega_l}_{m},\qquad \ell=0,\ldots,\frac{d+p}{2}-1,
\end{equation}
with $\omega$ satisfying $\det(A(\omega))=0$, where 
\begin{equation}\label{eq:det formula eigenvalue}
  A(\omega) =
  \begin{cases}
      \left(\zeta_{d+p}^{-i\ell} J_{m+\frac{d}{2}-i-1}(\zeta_{d+p}^{\ell}\omega)
      \right)_{i=1,\,\ell=0}^{\frac{d+p}{2},\,\frac{d+p}{2}-1}, & (-\tfrac p2 )_{\frac{d+p}{2}-1}  \lambda > 0,\\
      \left(\zeta_{2(d+p)}^{-i(2\ell+1)}J_{m+\frac{d}{2}-i-1}(\zeta_{2(d+p)}^{2\ell+1}\omega)
      \right)_{i=1,\,\ell=0}^{\frac{d+p}{2},\,\frac{d+p}{2}-1}, & (-\tfrac p2 )_{\frac{d+p}{2}-1}  \lambda < 0.\\
    \end{cases}
    % A(\omega)=\begin{pmatrix}
%\zeta_{d+p}^{-1\cdot 0} J_{m+\frac{d-2}{2}-1}(\zeta_{d+p}^0\omega) & \hdots & \zeta_{d+p}^{-1\cdot(\frac{d+p}{2}-1)} J_{m+\frac{d-2}{2}-1}(\zeta_{d+p}^{\frac{d+p}{2}-1}\omega)\\
% \vdots & \ddots & \vdots\\
%      \zeta_{d+p}^{-\frac{d+p}{2} \cdot 0} J_{m+\frac{d-2}{2}-\frac{d+p}{2}}(\zeta_{d+p}^0\omega) &  \hdots & \zeta_{d+p}^{-\frac{d+p}{2}\cdot(\frac{d+p}{2}-1)} J_{m+\frac{d-2}{2}-\frac{d+p}{2}}(\zeta_{d+p}^{\frac{d+p}{2}-1}\omega)
%\end{pmatrix}.
\end{equation}
\item[b)] The eigenfunctions are exactly 
\begin{equation*}
  r\mapsto \begin{cases}
    \sum_{\ell=1}^{\frac{d+p}{2}} c_\ell \,r^{1-\frac{d}{2}}J_{m+\frac{d}{2}-1}(\zeta^{\ell-1}_{d+p}\omega r ),& (-\tfrac p2 )_{\frac{d+p}{2}-1}  \lambda > 0,\\
    \sum_{\ell=1}^{\frac{d+p}{2}} c_\ell \,r^{1-\frac{d}{2}}J_{m+\frac{d}{2}-1}(\zeta^{2\ell-1}_{2(d+p)}\omega r ),& (-\tfrac p2 )_{\frac{d+p}{2}-1}  \lambda < 0,\\
  \end{cases}
\end{equation*}
where $c\in\R^{\frac{d+p}{2}}$ is in the nullspace of $A(\omega)$. 

\end{itemize}
\end{thm}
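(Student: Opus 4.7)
The plan is to identify $T_{m}^{d,p}$ from \eqref{eq:def of Tm} as the radial Green's function operator for $L_{m}^{N}$ with $N:=(d+p)/2\in\N$, where
\begin{equation*}
L_{m}:=-\frac{1}{r^{d-1}}\frac{d}{dr}\Bigl(r^{d-1}\frac{d}{dr}\Bigr)+\frac{m(m+d-2)}{r^{2}}
\end{equation*}
is the radial part of the Laplacian on $\R^{d}$ restricted to spherical harmonics of degree $m$. The starting point is the distributional identity
\begin{equation*}
(-\Delta_{x})^{N}\|x-y\|^{p}=c_{d,p}\,\delta(x-y),\qquad c_{d,p}\text{ proportional to }\bigl(-\tfrac{p}{2}\bigr)_{N-1},
\end{equation*}
obtained by iterating $(-\Delta_{x})\|x-y\|^{p}=-p(p+d-2)\|x-y\|^{p-2}$ a total of $N-1$ times to reach $\|x-y\|^{2-d}$ and then applying one more Laplacian to produce the delta (up to the surface area of $\S^{d-1}$). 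Projecting this identity onto the $m$-th angular sector and combining it with \eqref{eq:exp of Kn_add_int} shows that $\mathcal{K}_{m}^{d,p}(r,s)$ is, up to the explicit constant fixing the $\omega$--$\lambda$ relation in \eqref{eq:general solutions Dn2}, the Green's function of $L_{m}^{N}$ on $(0,1)$ with regularity at $r=0$ and $N$ natural boundary conditions at $r=1$ inherited from the continuity across $\partial\B^{d}$ of the potential $\int_{\B^{d}}\|x-y\|^{p}f(y)\,dy$ together with its lower-order iterated Laplacians.

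The eigenproblem $T_{m}^{d,p}\varphi=\lambda\varphi$ then becomes $L_{m}^{N}\varphi=\Lambda\varphi$ with $\Lambda=c_{d,p,m}/\lambda$. Factoring $L_{m}^{N}-\Lambda=\prod_{j=0}^{N-1}(L_{m}-\mu_{j})$, where $\{\mu_{j}\}$ are the $N$ distinct $N$-th roots of $\Lambda$, and using that the solutions of $L_{m}u=\mu u$ regular at $r=0$ are precisely the scalar multiples of $r^{1-d/2}J_{m+d/2-1}(\sqrt{\mu}\,r)$, one obtains an $N$-dimensional regular solution space. If $\Lambda>0$ the values $\sqrt{\mu_{j}}$ run through $\omega\zeta_{d+p}^{\ell}$ with $\omega=\Lambda^{1/(d+p)}$, while if $\Lambda<0$ they run through $\omega\zeta_{2(d+p)}^{2\ell+1}$ with $\omega=|\Lambda|^{1/(d+p)}$. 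Since $c_{d,p,m}$ shares its sign with $(-p/2)_{N-1}$, the dichotomy $(-p/2)_{N-1}\lambda\gtrless 0$ matches $\Lambda\gtrless 0$, yielding both the formula for $\omega$ in \eqref{eq:general solutions Dn2} and the explicit form of the eigenfunctions in part (b).

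It remains to impose the $N$ boundary conditions at $r=1$ on the regular general solution
\begin{equation*}
\varphi(r)=\sum_{\ell=0}^{N-1}c_{\ell}\,r^{1-d/2}J_{m+d/2-1}(\omega\zeta^{\ell}r)
\end{equation*}
with the appropriate root of unity $\zeta$. Using the classical derivative identity $\frac{d}{dz}\bigl(z^{-\nu}J_{\nu}(z)\bigr)=-z^{-\nu}J_{\nu+1}(z)$ iteratively to convert the $i$-th boundary derivative at $r=1$ into a single Bessel value produces, row by row, the entries $\zeta^{-i\ell}J_{m+d/2-i-1}(\zeta^{\ell}\omega)$ of $A(\omega)$, with the prefactors $\zeta^{-i\ell}$ coming from the chain rule in $z=\omega\zeta^{\ell}r$. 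Nontrivial solvability in $(c_{\ell})_{\ell=0}^{N-1}$ is then $\det A(\omega)=0$, and the null vector supplies the coefficients in part (b).

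The main obstacle will be pinning down the $N$ boundary conditions at $r=1$ with the correct normalization and verifying that the derivative ladder reproduces $A(\omega)$ exactly, including the precise constant in \eqref{eq:general solutions Dn2}. A pragmatic shortcut that bypasses the potential-theoretic derivation is to exploit the fact that the hypergeometric series in \eqref{eq:kernel K} terminates (because $1-(d+p)/2$ is a nonpositive integer), so $\mathcal{K}_{m}^{d,p}(r,s)$ is piecewise polynomial and completely explicit. Applying $L_{m}^{N}$ in $r$ directly to this closed form extracts both the Dirac distribution with the exact constant $c_{d,p,m}$ and the $N$ homogeneous linear relations in $r$ at $r=1$ satisfied by $\mathcal{K}_{m}^{d,p}$. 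Substituting the regular Bessel ansatz into those same relations yields $A(\omega)$ verbatim, while the two-case formula in \eqref{eq:det formula eigenvalue} follows by tracking the branch of the square root on the positive versus negative real axis.
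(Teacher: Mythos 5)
Your framework is sound and matches the first half of the paper's argument: the paper's Lemma~\ref{lemma:helping is good} likewise identifies $K^{d,p}$ (up to a constant) as the Green's function of $\Delta^{(d+p)/2}$, projects onto the $m$-th angular sector, and factors $D_m^{(d+p)/2}$ into $N=(d+p)/2$ Helmholtz-type operators whose regular solutions are the Bessel combinations $r^{1-d/2}J_{m+d/2-1}(\omega_\ell r)$, exactly as you propose. However, two substantive gaps remain.

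First, your exclusion of the $Y$-type Bessel solutions is too quick. ``Regular at $r=0$'' or membership in $L_2([0,1],r^{d-1}dr)$ is not sufficient: for $d=3$, $m=0$, the function $r^{-1/2}Y_{1/2}(\omega r)$ is bounded near $0$ and square-integrable with weight $r^2$, so it is not eliminated by integrability alone. The paper instead invokes real analyticity of eigenfunctions of integer powers of the Laplacian (\cite{Aronszajn:1983mb,John:1950ek}) together with the fact that the radial part must have definite parity matching $m$ \cite{Baouendi:1974pr}; the $\mathcal{Y}^{d,\omega}_m$ solutions fail this parity constraint.

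Second — and this is the heart of the theorem — the step producing $A(\omega)$ is not actually established. You propose that the $i$-th boundary condition at $r=1$ applied to the Bessel ansatz yields, via the ladder $\frac{d}{dz}(z^{-\nu}J_\nu(z))=-z^{-\nu}J_{\nu+1}(z)$, precisely the entries $\zeta^{-i\ell}J_{m+d/2-i-1}(\zeta^\ell\omega)$. But that ladder raises the Bessel order, not lowers it, and the lowering ladder $\left(\frac{1}{r}\frac{d}{dr}\right)^i(r^\nu J_\nu(\omega\zeta^\ell r))=(\omega\zeta^\ell)^i r^{\nu-i}J_{\nu-i}(\omega\zeta^\ell r)$ yields the prefactor $\zeta^{+i\ell}$, not $\zeta^{-i\ell}$; these are not related by row or column rescaling, so the resulting determinant criterion is a genuinely different matrix. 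Moreover, the boundary conditions for $K^{d,p}$ are the nonlocal conditions of \cite{Kalmenov:2012fq} and are not of the form $\left(\frac{1}{r}\frac{d}{dr}\right)^i u(1)=0$. The paper sidesteps the whole boundary-condition identification: Lemma~\ref{lemma:T of I} computes $T_m^{d,p}\mathcal{J}^{d,\omega}_m$ directly by term-by-term integration of the Bessel power series, producing an exact identity $T_m^{d,p}\mathcal{J}^{d,\omega}_m = (\text{const})\cdot\mathcal{J}^{d,\omega}_m$ plus a correction sum in which the coefficients are $\omega^{-i}\mathcal{J}^{d,\omega}_{m-i}(1) = \omega^{-i}J_{m+d/2-i-1}(\omega)$, and the $r$-dependence sits in linearly independent hypergeometric polynomials of distinct degrees. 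Requiring the correction to vanish for a linear combination $\sum_\ell c_\ell\mathcal{J}^{d,\omega_\ell}_m$ yields row by row, with $\omega_\ell=\zeta^\ell\omega$, the system $A(\omega)c=0$ verbatim, including the $\zeta^{-i\ell}$ factor coming from $\omega_\ell^{-i}$. Your proposed shortcut of applying $L_m^N$ directly to the explicit piecewise-polynomial form of $\mathcal{K}_m^{d,p}$ is a legitimate alternative in principle, but without carrying it through one cannot claim it ``yields $A(\omega)$ verbatim''; the work that makes the theorem precise is exactly the computation your plan defers, and the paper's Lemma~\ref{lemma:T of I} (a substantial piece of Bessel-series bookkeeping together with the identity \eqref{eq:E}) is where that work is done.
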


\begin{remark}
Computer experiments seem to indicate that the nullspace of $A(\omega)$ is one-dimensional if $\det(A(\omega))=0$. In that case, the function 
\begin{equation}\label{eq:minor formula}
  r\mapsto \begin{cases}
    \sum_{\ell=1}^{\frac{d+p}{2}} (-1)^{\ell} A_{[1,\ell]}(\omega) \,r^{1-\frac{d}{2}}J_{m+\frac{d}{2}-1}(\zeta^{\ell-1}_{d+p}\omega r ),& (-\tfrac p2 )_{\frac{d+p}{2}-1}  \lambda > 0,\\
    \sum_{\ell=1}^{\frac{d+p}{2}} (-1)^{\ell} A_{[1,\ell]}(\omega) \,r^{1-\frac{d}{2}}J_{m+\frac{d}{2}-1}(\zeta^{2\ell-1}_{2(d+p)}\omega r ),& (-\tfrac p2 )_{\frac{d+p}{2}-1}  \lambda < 0,   
  \end{cases}
\end{equation}
where $A_{[1,\ell]}(\omega)$ denotes the $(1,\ell)$ minor of $A(\omega)$, spans the eigenspace  associated 
with $\lambda$.
 \end{remark}

Appendix~\ref{sec:app sec ball} is dedicated to the proof of Theorem~\ref{thm:ball ultimate}. The proof reveals strong ties with polyharmonic operators on the unit ball and higher order differential operators on the interval $[0,1]$. We refer to \cite{Adcock:2011by} for structurally related spectral decompositions of polyharmonic operators on $[0,1]$ with homogeneous Neumann boundary conditions.   
%We now specify the eigenfunctions:
%\begin{corollary}
%Under the assumptions and notations of Theorem \ref{thm:ball ultimate}, let $\det(A(\omega))=0$. The eigenfunctions of $T_m^{d,p}$ with respect to $\lambda$ are exactly  
%\begin{equation*}
%r\mapsto \sum_{\ell=1}^{\frac{d+p}{2}} c_\ell \frac{J_{m+\frac{d}{2}-1}(\zeta^{\ell-1}_{d+p}\omega r )}{r^{\frac{d}{2}-1}},
%\end{equation*}
%where $c\in\R^{\frac{d+p}{2}}$ is in the nullspace of $A(\omega)$.  In particular, 
%\begin{equation*}
%r\mapsto \sum_{\ell=1}^{\frac{d+p}{2}} (-1)^{\ell} \det(A^{[\ell]}(\omega)) \frac{J_{m+\frac{d}{2}-1}(\zeta^{\ell-1}_{d+p}\omega r )}{r^{\frac{d}{2}-1}}
%\end{equation*}
%is in the eigenspace associated to $\lambda$, where $A^{[\ell]}(\omega)$ is the $(1,\ell)$ minor of $A(\omega)$.
%\end{corollary}

\begin{corollary}[$d=3$, $p=1$]\label{cor:example}
The nonzero eigenvalues of $T^{3,1}_m$ for $m\in\N\setminus \{0\}$ are exactly the positive solutions of the equation
      \begin{equation*}
       J_{m-\frac{1}{2}}(\omega) J_{m-\frac{3}{2}}( \mathrm i \omega) - \mathrm i J_{m-\frac{1}{2}}( \mathrm i \omega)  J_{m-\frac{3}{2}}(\omega)
      = 0,
      \end{equation*}
with $\lambda=-\omega^{-4}(4m+2)$. The corresponding eigenspaces are $1$-dimensional with the representative
 \begin{equation}\label{eq:minor formula2}
   f_{\lambda}(r) 
       = r^{-\frac 12}\left(J_{m+\frac{1}{2}}(\omega r)\mathrm i J_{m-\frac{1}{2}}(\mathrm i \omega) +  J_{m+\frac{1}{2}}(\mathrm i \omega r)J_{m-\frac{1}{2}}(\omega)\right).
      \end{equation}
%{\color{red}provide also $\mathcal{K}_0^{d,1}$ for $1-\frac{\Gamma(\frac{d}{2})}{2\sqrt{\pi}\Gamma(\frac{d+1}{2})}\|x-y\|$ and provide everything for $m=0$ mit und ohne Konstante. }
%      {\color{red}decay rate of eigenvalues!!! 
%      
%       }
\end{corollary}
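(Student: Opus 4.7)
The plan is to specialize Theorem~\ref{thm:ball ultimate} to $d = 3$ and $p = 1$. Here $(d+p)/2 = 2$, $\zeta_{d+p} = \zeta_4 = \mathrm i$, $(-p/2)_{(d+p)/2-1} = (-1/2)_1 = -1/2$, and $2^{d+p-2}\bigl((d+p)/2-1\bigr)! = 4$. Substituting into \eqref{eq:general solutions Dn2} gives
\[
\omega^4 \;=\; \bigl|\lambda^{-1}\cdot 4\cdot(2m+1)\cdot(-\tfrac12)\cdot 1\bigr| \;=\; \tfrac{4m+2}{|\lambda|},
\]
and the sign branch $(-p/2)_1\lambda > 0$ forces $\lambda < 0$, so $\lambda = -(4m+2)/\omega^4$, as claimed.

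Next I would write out the $2\times 2$ matrix $A(\omega)$ from \eqref{eq:det formula eigenvalue}. Using $d/2 = 3/2$ (hence $m+d/2-i-1 = m-i+1/2$), $\zeta_4^{-1} = -\mathrm i$, and $\zeta_4^{-2} = -1$, one obtains
\[
A(\omega) \;=\; \begin{pmatrix} J_{m-1/2}(\omega) & -\mathrm i\, J_{m-1/2}(\mathrm i\omega) \\ J_{m-3/2}(\omega) & -J_{m-3/2}(\mathrm i\omega) \end{pmatrix},
\]
so that $\det A(\omega) = \mathrm i\, J_{m-1/2}(\mathrm i\omega)J_{m-3/2}(\omega) - J_{m-1/2}(\omega)J_{m-3/2}(\mathrm i\omega)$. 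Setting this to zero reproduces the equation in the corollary, up to an overall sign. The hypothesis of the Remark following Theorem~\ref{thm:ball ultimate} is automatic here: a nonzero $2\times 2$ matrix with vanishing determinant has rank $1$, so its nullspace is one-dimensional.

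The eigenfunction is then read off from the minor formula \eqref{eq:minor formula}. The cofactors are $A_{[1,1]}(\omega) = -J_{m-3/2}(\mathrm i\omega)$ and $A_{[1,2]}(\omega) = J_{m-3/2}(\omega)$, yielding an eigenfunction proportional to
\[
r^{-1/2}\bigl(J_{m-3/2}(\mathrm i\omega)\,J_{m+1/2}(\omega r) + J_{m-3/2}(\omega)\,J_{m+1/2}(\mathrm i\omega r)\bigr).
\]
To match \eqref{eq:minor formula2}, I would rescale by $J_{m-1/2}(\omega)/J_{m-3/2}(\omega)$ and substitute $J_{m-3/2}(\mathrm i\omega) = \mathrm i\, J_{m-3/2}(\omega)\,J_{m-1/2}(\mathrm i\omega)/J_{m-1/2}(\omega)$, which follows directly from the determinant equation. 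Isolated $\omega$ at which a denominator vanishes are handled by continuity, since every expression involved is entire in $\omega$.

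The main remaining obstacle is that Theorem~\ref{thm:ball ultimate}(a) has a second sign branch, corresponding to $\lambda > 0$, and the corollary implicitly asserts it contributes nothing for $m \geq 1$. The cleanest route is to argue that the kernel $-\|x-y\|$ is conditionally positive definite on $\R^3$ (Schoenberg), so the integral operator with kernel $K^{3,1}(x,y) = \|x-y\|$ is negative semi-definite on the subspace of $L_2(\mathbb B^3)$ orthogonal to the constants. Via the decomposition \eqref{eq:spec total}, every eigenfunction of $T^{3,1}_m$ with $m \geq 1$ lifts to such a mean-zero function, so its eigenvalue is non-positive. This simultaneously explains why the corollary excludes $m = 0$: the constant mode produces a single positive eigenvalue that does not fit the stated formula.
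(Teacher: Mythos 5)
Your proposal is correct and follows the same route as the paper: a direct specialization of Theorem~\ref{thm:ball ultimate} and the minor formula \eqref{eq:minor formula} to $d=3$, $p=1$ (the paper's own proof is just the terse remark that the corollary is ``derived from Theorem~\ref{thm:ball ultimate}''). Your evaluations of $\omega^4 = (4m+2)/|\lambda|$, of the matrix $A(\omega)$, and of the cofactors $A_{[1,1]}(\omega) = -J_{m-3/2}(\mathrm i\omega)$, $A_{[1,2]}(\omega) = J_{m-3/2}(\omega)$ are all correct, and the rescaling via the determinant relation recovers \eqref{eq:minor formula2} exactly. Where you go beyond the paper is in explicitly ruling out the $\lambda>0$ branch of Theorem~\ref{thm:ball ultimate}(a) via Schoenberg's conditional positive definiteness of $-\|x-y\|$, using that every eigenfunction of $T^{3,1}_m$ with $m\ge 1$ lifts to a mean-zero function on $\mathbb B^3$; the paper's one-line remark is silent on this, so your argument fills a genuine (if small) gap. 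Two points you leave implicit could be tightened: to conclude the nullspace of $A(\omega)$ is one-dimensional you need $A(\omega)\neq 0$, which follows since $J_{m-3/2}(\mathrm i\omega)$ (equivalently $J_{m-1/2}(\mathrm i\omega)$, the paper's choice) is a nonzero multiple of the modified Bessel function $I_{m-3/2}(\omega)>0$ for $\omega>0$; and the ``handled by continuity'' step is superfluous, because the determinant relation together with $J_{m-1/2}(\mathrm i\omega)\neq 0$ would force $J_{m-1/2}(\omega)$ and $J_{m-3/2}(\omega)$ to vanish simultaneously, which consecutive-order Bessel functions never do on $(0,\infty)$.
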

%\begin{proof}[Proof of Corollary \ref{cor:example}]
The formulas in Corollary~\ref{cor:example} are derived from Theorem~\ref{thm:ball ultimate}. Since $J_{m-\frac{1}{2}}({\rm i}\omega)\neq 0$, for all $\omega\in\R\setminus\{0\}$, the eigenspaces are $1$-dimensional and \eqref{eq:minor formula2} is not the zero-function.  

%\begin{remark}
In view of \eqref{eq:dist is rkh} in Theorem~\ref{thm:compact set} we are particularly interested in kernels of the form $c-\|x-y\|$. In this case, the expansion holds with $-\mathcal{K}^{d,1}_m$ for $m\geq 1$ and $c-\mathcal{K}^{d,1}_0$ for $m=0$. 
%\end{remark}

%
%Note that, 
%up to a constant depending on $d$ and $p$, the kernel $K^{d,p}$ is the Green's function of a $d$-dimensional, polyharmonic equation $\Delta^{\frac{d+p}{2}}u=f$ on $\mathbb{B}^d$ with certain nonlocal boundary conditions, cf.~\cite{Kalmenov:2012fq}. By using spherical coordinates, each $\mathcal{K}^{d,p}_m$ is the Green's function of a $1$-dimensional differential equation $D_m^{\frac{d+p}{2}} u =f$ on $[0,1]$ with boundary conditions in $r=1$, where  
%\begin{equation*}
%D_m := \partial^{2}_r + \frac{d-1}{r}\partial_r - m(m+d-2) \frac{1}{r^{2}}.
%\end{equation*}
%Let us specify the situation for $d=3$ and $p=1$:
%\begin{example}[$d=3$, $p=1$]
%It can be checked that the $1$-dimensional kernel $\mathcal{K}^{3,1}_m$ is the Green's function of the $1$-dimensional differential equation $D_m^2 u =f$ with boundary conditions 
%\begin{equation}\label{eq:boundary 1 d in example}
%\begin{pmatrix}
%(m-1)(m+1) & 2m+1&1&0\\
%0& m(m+2) & 2m+3 & 1
%\end{pmatrix}
%\begin{pmatrix}
%u(1)\\ u'(1)\\u''(1)\\ u'''(1)
%\end{pmatrix}
%=0.
%\end{equation}
%{\color{red}Technically, we need 4 equations, no?!? We got rid of second kind Bessel, how do we formulate this here?}
%
%Thus, the $3$-dimensional kernel $(x,y)\mapsto \mathcal{K}^{3,1}_m(\|x\|,\|y\|) \,\mathcal{C}^{\frac{3}{2}-1}_m\big(\big\langle \tfrac{x}{\|x\|},\tfrac{y}{\|y\|}\big\rangle\big)$ is associated to the polyharmonic equation $\Delta^2 u =f$ on $\mathbb{B}^3$ and \eqref{eq:boundary 1 d in example} translates into normal boundary conditions on $\S^{2}$. 
% 
%
%
%\end{example}

\section{The rotation group $\SO(3)$}\label{sec:SO3}
In this section we derive the Fourier expansion of the discrepancy kernel on the special orthogonal group $\SO(3)$. The eigenfunctions turn out to be classical functions but the coefficients and their decay rates need to be determined. We 
also provide numerical experiments by using the nonequispaced fast Fourier transform on $\SO(3)$.
\subsection{Fourier expansion on $\SO(3)$}
By identifying $\R^{d\times d}$ with $\R^{d^2}$, Theorem~\ref{thm:compact set} applies to subsets of $\R^{d\times d}$ endowed with the trace inner product 
\begin{equation*}
\langle x,y\rangle_{\F}:=\trace(x^\top y),\quad x,y\in\R^{d\times d},
\end{equation*}
and the induced Frobenius norm $\|\cdot\|_{\F}$ on $\R^{d\times d}$. 
In this way, $\SO(3)$ is contained in $\mathbb{B}^{9}_{\sqrt{3}}$, and it is natural to consider ${s}=\sqrt{3}$. We endow $\SO(3)$ with the normalized Haar measure $\sigma_{\SO(3)}$. Let $\{\mathcal{D}^m_{k,l}:k,l=-m,\ldots,m\}$ denote the Wigner $\mathcal{D}$-functions on $\SO(3)$, which are closely related to the irreducible representations of $\SO(3)$ and provide an orthonormal basis for $L_2(\SO(3))$, cf.~\cite{Varshalovich:1988vh}. For $p>0$, the Fourier expansion
\begin{equation}\label{eq:SO exp}
2^{-\frac{p}{2}}\|x-y\|^p_{\F} =\sum_{m=0}^\infty a_m(p,\SO(3)) \sum_{k,l=-m}^m \mathcal{D}^m_{k,l}(x)\overline{\mathcal{D}^m_{k,l}(y)},\quad x,y\in\SO(3),
\end{equation}
holds and, analogous to \eqref{eq:ft coeff}, the coefficients are determined by
 \begin{equation}\label{eq:ft coeff2}
a_m(p,\SO(3)):=\iint\limits_{\SO(3)\times \SO(3)}2^{-\frac{p}{2}}\|x-y\|_{\F}^p \overline{\mathcal{D}^m_{k,l}(x)}\mathcal{D}^m_{k,l}(y){\rm d}\sigma_{\SO(3)}(x){\rm d}\sigma_{\SO(3)}(y).
\end{equation}
We now compute these coefficients for the entire range $p>-3$.
\begin{proposition}\label{prop:SO}
For $p > -3$, the coefficients \eqref{eq:ft coeff2} are 
\begin{equation}\label{eq:into SO}
  a_m(p,\SO(3)) =   \frac{2^p\Gamma(\frac{p}{2}+\frac{3}{2})}{\sqrt{\pi}\Gamma(-\frac{p}{2})}\cdot\frac{\Gamma(-\frac{p}{2}+m)}{\Gamma(\frac{p}{2}+2+m)}\cdot\frac{1}{(m+\frac{1}{2})}.
\end{equation}
In particular, if $p\not\in 2\N$, then 
\begin{equation*}
|a_m(p,\SO(3))| = \left| \frac{2^p\Gamma(\frac{p}{2}+\frac{3}{2})}{\sqrt{\pi}\Gamma(-\frac{p}{2})}\right|m^{-(p+3)}(1+o(1)),\qquad m\in\N,
\end{equation*}
and the series \eqref{eq:SO exp} terminates if $p\in 2\N$. 
\end{proposition}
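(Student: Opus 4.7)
The plan is to reduce the double integral \eqref{eq:ft coeff2} to a single trigonometric integral on $\SO(3)$ by combining bi-invariance of the Haar measure with the Weyl integration formula, then evaluate that integral in closed form via a classical Beta-function identity, and finally simplify using the duplication and reflection formulas for the Gamma function.

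First I observe that $\|x-y\|_{\F}^{2}=6-2\trace(xy^{-1})$ depends only on $g=xy^{-1}$. Summing \eqref{eq:ft coeff2} over the $(2m+1)^{2}$ pairs $(k,l)$ and using the addition formula for orthonormal Wigner $\mathcal{D}$-functions,
\[
\sum_{k,l=-m}^{m}\mathcal{D}^{m}_{k,l}(x)\overline{\mathcal{D}^{m}_{k,l}(y)}=(2m+1)\chi^{m}(xy^{-1}),\qquad \chi^{m}(\theta)=\frac{\sin((2m+1)\theta/2)}{\sin(\theta/2)},
\]
together with bi-invariance of $\sigma_{\SO(3)}$, reduces the problem to
\[
(2m+1)\,a_{m}(p,\SO(3))=\int_{\SO(3)}2^{-p/2}\|I-g\|_{\F}^{p}\,\chi^{m}(g)\,{\rm d}\sigma_{\SO(3)}(g).
\]

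Since the integrand is a class function, the Weyl integration formula $\int_{\SO(3)}F\,{\rm d}\sigma_{\SO(3)}=\tfrac{2}{\pi}\int_{0}^{\pi}F(\theta)\sin^{2}(\theta/2)\,{\rm d}\theta$ together with $\|I-g\|_{\F}^{2}=8\sin^{2}(\theta/2)$ and the substitution $u=\theta/2$, followed by doubling via the reflection $u\mapsto\pi-u$ (which preserves both $\sin u$ and $\sin((2m+1)u)$ because $2m+1$ is odd), yields
\[
(2m+1)\,a_{m}(p,\SO(3))=\frac{2^{p+1}}{\pi}\int_{0}^{\pi}\sin^{p+1}(u)\sin((2m+1)u)\,{\rm d}u.
\]
This classical integral admits the Beta-function evaluation
\[
\int_{0}^{\pi}\sin^{\nu-1}(x)\sin(ax)\,{\rm d}x=\frac{\pi\sin(a\pi/2)\,\Gamma(\nu)}{2^{\nu-1}\,\Gamma(\tfrac{\nu+a+1}{2})\,\Gamma(\tfrac{\nu-a+1}{2})},
\]
which, applied with $\nu=p+2$ and $a=2m+1$ and using $\sin((2m+1)\pi/2)=(-1)^{m}$, produces
\[
a_{m}(p,\SO(3))=\frac{(-1)^{m}\Gamma(p+2)}{(2m+1)\,\Gamma(m+\tfrac{p}{2}+2)\,\Gamma(\tfrac{p}{2}+1-m)}.
\]

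To bring this into the claimed form \eqref{eq:into SO} I apply Legendre duplication $\Gamma(p+2)=2^{p+1}\pi^{-1/2}\Gamma(\tfrac{p}{2}+1)\Gamma(\tfrac{p}{2}+\tfrac{3}{2})$ together with the consequence $\frac{(-1)^{m}}{\Gamma(\tfrac{p}{2}+1-m)}=\frac{\Gamma(-\tfrac{p}{2}+m)}{\Gamma(-\tfrac{p}{2})\,\Gamma(\tfrac{p}{2}+1)}$ of the reflection formula $\Gamma(z)\Gamma(1-z)=\pi/\sin(\pi z)$ at $z=m-p/2$; the common factor $\Gamma(\tfrac{p}{2}+1)$ then cancels and the expression matches \eqref{eq:into SO}. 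The asymptotic assertion for $p\notin 2\N$ follows immediately from Stirling applied to $\Gamma(-\tfrac{p}{2}+m)/\Gamma(\tfrac{p}{2}+2+m)\sim m^{-(p+2)}$ combined with $(m+\tfrac{1}{2})^{-1}$, while for $p=2k\in 2\N$ the vanishing $1/\Gamma(-p/2)=0$ forces $a_{m}(p,\SO(3))=0$ for every $m>k$, and for $m\le k$ the quotient $\Gamma(-p/2+m)/\Gamma(-p/2)=(-p/2)_{m}$ remains well defined so that the series terminates. The main obstacle I anticipate is the careful bookkeeping of normalization constants, that is, the $(2m+1)$ in the Wigner addition formula, the $2/\pi$ in the Weyl measure, the power $2^{3p/2}$ coming from $\|I-g\|_{\F}^{p}$, and the final juxtaposition of duplication and reflection identities; once the reduction to $\int_{0}^{\pi}\sin^{p+1}(u)\sin((2m+1)u)\,{\rm d}u$ has been established, everything that remains is purely algebraic.
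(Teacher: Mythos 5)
Your proof is correct, and it takes a genuinely different route from the paper's. The paper derives $a_m(p,\SO(3))$ algebraically from the circle coefficients $a_m(p,\S^1)$ of Proposition~\ref{prop:sphere unt}: it expands $2^{p/2}\sin(s/2)^p$ in Chebyshev polynomials, rewrites $\mathcal{T}_{2m}(\cos(s/2))$ in terms of Gegenbauer polynomials via $\mathcal{T}_m = \tfrac12(\mathcal{C}^1_m-\mathcal{C}^1_{m-2})$, and reads off $a_m(p,\SO(3))$ as the telescoped difference $a_m(p,\S^1)-a_{m+1}(p,\S^1)$ after invoking the addition theorem for the Wigner $\mathcal{D}$-functions. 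You instead compute \eqref{eq:ft coeff2} directly: bi-invariance of the Haar measure and the addition formula collapse the double integral to a single class-function integral $\int_{\SO(3)}2^{-p/2}\|I-g\|_{\F}^p\,\chi^m(g)\,{\rm d}\sigma(g)$, the Weyl integration formula turns this into $\frac{2^{p+1}}{\pi}\int_0^\pi \sin^{p+1}(u)\sin((2m+1)u)\,{\rm d}u$, and a classical Beta-type integral plus Gamma-function identities finish the job. The paper's route reuses the already-established $\S^1$ data and fits its narrative progression from spheres to $\SO(3)$; your route is more self-contained, requiring only standard representation theory of $\SO(3)$ and a textbook integral, at the cost of having to manipulate more Gamma-function identities at the end. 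One small point you should make explicit: the Beta-integral identity you quote is usually derived for $\Re\nu>0$, i.e. $p>-2$, whereas the Proposition claims validity for $p>-3$; for $-3<p\le -2$ one should note that $\int_0^\pi\sin^{p+1}(u)\sin((2m+1)u)\,{\rm d}u$ still converges absolutely (because $\sin((2m+1)u)$ vanishes linearly at both endpoints, giving integrand $\sim u^{p+2}$) and that both sides are analytic in $p$, so the identity extends. The paper makes an analogous remark about analytic continuation at the end of its proof, and you should too.
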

The proof is given in Appendix~\ref{sec:A6}. For $p\in 2\N$, we again apply the convention $\frac{\Gamma(-\frac{p}{2}+m)}{\Gamma(-\frac{p}{2})}=(-\frac{p}{2})_m$ in \eqref{eq:into SO}, so that $a_m(p,\SO(3))=0$ for all $m>\frac{p}{2}$ if $p\in 2\N$. Provided that $\tau>3/2$, the Sobolev space $\mathbb{H}^\tau(\SO(3))$ is the reproducing kernel Hilbert space associated with the reproducing kernel
\begin{equation*}
(x,y)\mapsto \sum_{m=0}^\infty (1+m(m+1))^{-\tau}\sum_{k,l=-m}^m \mathcal{D}^m_{k,l}(x)\overline{\mathcal{D}^m_{k,l}(y)},\quad x,y\in\SO(3).
\end{equation*}
The choice $p=1$ in Proposition~\ref{prop:SO} implies that the kernel $K_{\beta_{9,s}}|_{\SO(3)\times\SO(3)}$ reproduces the Sobolev space $\mathscr{H}_{K_{\beta_{9,s}}}(\SO(3))=\mathbb{H}^{2}(\SO(3))$ with an equivalent norm provided that $s\geq \sqrt{3}$.

%\begin{remark}\label{rem:001}
%The statements on the reproducing kernel Hilbert space holds in more generality. 
For $d\geq 2$, $K_{\beta_{d^2,\sqrt{d}}}|_{\SO(d)\times\SO(d)}$ reproduces the Sobolev space $\mathbb{H}^{\frac{d(d-1)+2}{4}}(\SO(d))$ with an equivalent norm. Indeed, Theorem~\ref{thm:compact set} and Section~\ref{sec:sphere} yield that $K_{\beta_{d^2,d}}|_{\S^{d^2-1}\times \S^{d^2-1}}$ reproduces $\mathbb{H}^{\frac{d^2}{2}}(\S^{d^2-1})$ with equivalent norms. Rescaling implies that $K_{\beta_{d^2,d}}|_{d\,\S^{d^2-1}\times d\,\S^{d^2-1}}$ reproduces $\mathbb{H}^{\frac{d^2}{2}}({d\,}\S^{d^2-1})$. Since $\SO(d)\subset d\,\S^{d^2-1}$, the assertion for $\SO(d)$ follows from results on restricting kernels in \cite{Fuselier:2012jt}.  
%\end{remark}

\subsection{Numerical examples on $\SO(3)$}
Proposition~\ref{prop:SO} yields the coefficients of the kernel expansion
\begin{equation*}
K_{\beta_{9,\sqrt{3}}}(x,y) = \sum_{m=0}^\infty a_m \sum_{k,l=-m}^m \mathcal{D}^m_{k,l}(x)\overline{\mathcal{D}^m_{k,l}(y)},\quad x,y\in\SO(3).
\end{equation*}
For $\supp(\mu),\supp(\nu_n)\subset \SO(3)$, the $L_2$-discrepancy \eqref{eq:fund eq min} for $K_{\beta_{9,\sqrt{3}}}$ becomes
\begin{equation}\label{eq:series truncatingaSO}
\mathscr{D}_{\beta_{9,\sqrt{3}}}(\mu,\nu_n) = \sum_{m=0}^\infty a_m \sum_{k,l=-m}^m  \left|\hat{\mu}^m_{k,l}- \frac{1}{n}\sum_{j=1}^n \overline{\mathcal{D}^m_{k,l}(x_j)}\right|^2,
\end{equation}
where $\hat{\mu}^m_{k,l}$ denotes the Fourier coefficient of $\mu$ with respect to $\mathcal{D}^m_{k,l}$, cf.~\eqref{eq:fund eq min}. We 
truncate the series \eqref{eq:series truncatingaSO} at $M=8$ and minimize 
\begin{equation}\label{eq:min SO}
\sum_{m=0}^M a_m \sum_{k,l=-m}^m  \left|\hat{\mu}^m_{k,l} - \frac{1}{n}\sum_{j=1}^n \overline{\mathcal{D}^m_{k,l}(x_j)}\right|^2
\end{equation}
among all $n$-point sets $\{x_1,\ldots,x_n\}\subset\SO(3)$ for fixed $n=30$. We efficiently solve the least squares minimization by using the nonequispaced fast Fourier transform on $\SO(3)$, cf.~\cite{Graf:2009ye,Potts:2009gb}. 
%For visualization, $\SO(3)$ is parametrized by unit quaternions, the latter corresponds to $\S^3$, which is mapped to $\mathbb{B}^3$ by stereographic projection. 
Figure~\ref{fig:SO3} shows the minimizing points mapped onto $\mathbb{B}^3$. 
\begin{figure}
\includegraphics[width=.32\textwidth]{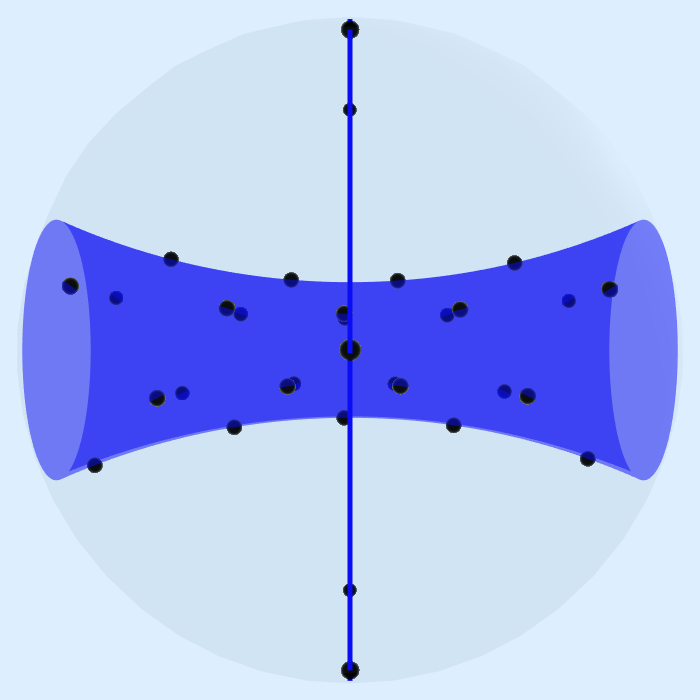}
\includegraphics[width=.32\textwidth]{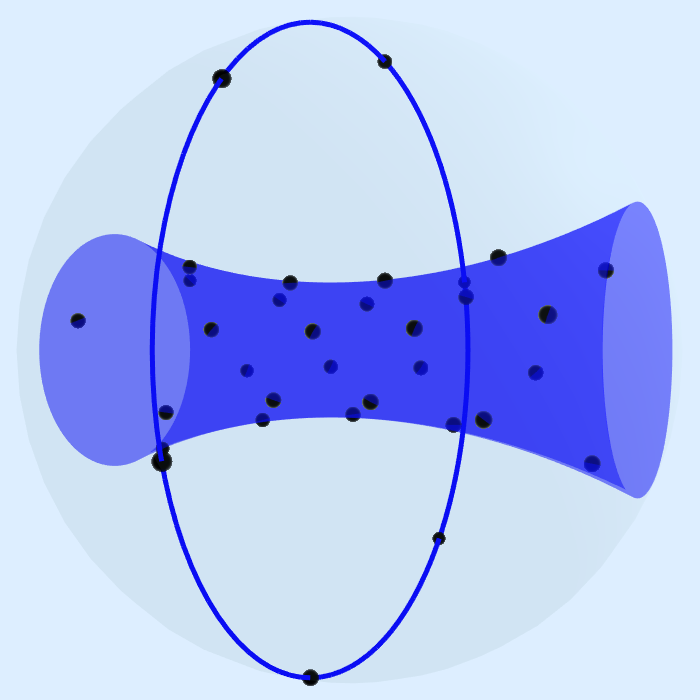}
\includegraphics[width=.32\textwidth]{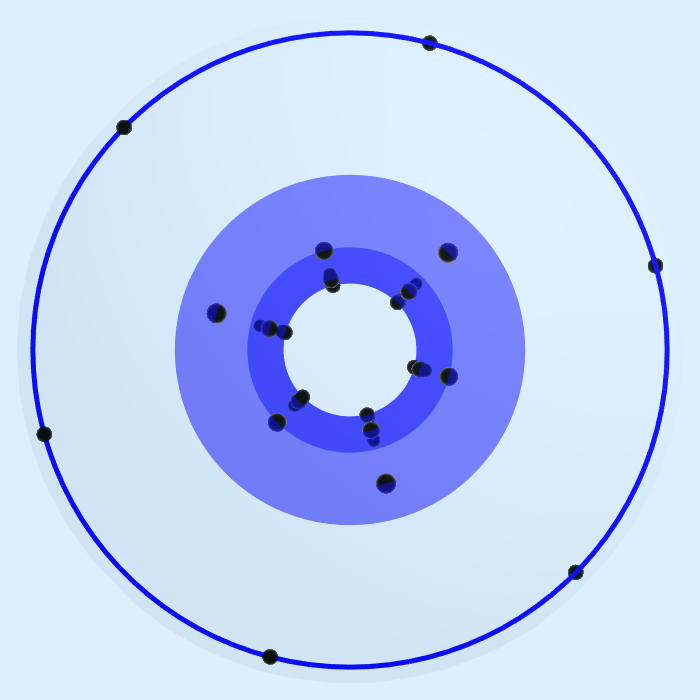}
\caption{We use the standard parametrization of  $\SO(3)$ by $\S^3$ via unit quaternions, which is then mapped into $\mathbb{B}^3$ by stereographic projection. The target measure  $\mu$ is supported on two disjoint parts with weight ratio $9/1$ colored in darker blue by the cylindrical surface and the great circle. Numerical minimization of \eqref{eq:min SO} splits $30$ points in $\SO(3)$ into $27$ points on the inner surface and $3$ points on the great circle. We plotted $6$ points on the great circle but antipodal points correspond to the same point in $\SO(3)$.}\label{fig:SO3}
\end{figure}

\section{The Grassmannian $\G_{2,4}$}\label{sec:d}
First, the Fourier expansion of the discrepancy kernel on $\G_{2,4}$ is computed. To prepare for developing the nonequispaced fast Fourier transform on $\G_{2,4}$, we then explicitly parametrize the Grassmannian $\G_{2,4}$ by its double covering $\S^2\times\S^2$. Next, we derive the nonequispaced fast Fourier transform on $\S^2\times\S^2$ and provide numerical minimization experiments on $\G_{2,4}$. 
\subsection{Fourier expansion on $\G_{2,4}$}\label{sec:G24 a}
Proposition~\ref{thm:compact set} also applies to the Grassmannian
\begin{equation*}
\G_{2,4}:=\{x\in\R^{4\times 4} : x^\top = x,\; x^2=x,\; \rank(x)=2\}
\end{equation*}
with $s=\sqrt{2}$ when $\R^{4\times 4}$ is identified with $\R^{16}$. To derive the Fourier expansion on $\G_{2,4}$, we require some preparations. We shall use integer partitions $\lambda=(\lambda_1,\lambda_2)\in\N^2$ with $\lambda_1\geq \lambda_2\geq 0$. We also denote $|\lambda|:=\lambda_1+\lambda_2$. The orthogonal group $\OOO(4)$ acts transitively on $\G_{2,4}$ by conjugation and induces the irreducible decomposition 
\begin{equation}\label{eq:decomp L2 single}
L_{2}(\G_{2,4},\sigma_{\G_{2,4}}) = \bigoplus_{\lambda_1\geq\lambda_2\geq 0} H_{\lambda}(\G_{2,4}),\qquad
H_{\lambda}(\G_{2,4}) \perp H_{\lambda'}(\G_{2,4}), \quad \lambda \ne \lambda',
\end{equation}
where $\sigma_{\G_{2,4}}$ is the normalized orthogonally invariant measure on $\G_{2,4}$ and $H_{\lambda}(\G_{2,4})$ is equivalent to the
irreducible representation $\mathcal H_{2\lambda}^{4}$ of $\OOO(4)$ with type 
$2\lambda$, cf.~\cite{Bachoc:2002aa,James:1974aa}. The normalized eigenfunctions of the Laplace--Beltrami operator on $\G_{2,4}$ form an orthonormal basis for $L_{2}(\G_{2,4})$, and each $H_\lambda(\G_{k,d})$ is contained in the eigenspace $E_{\alpha_\lambda}$ associated 
with the eigenvalue $\alpha_\lambda = 4(\lambda_1^2+\lambda_2^2+\lambda_1)$, cf.~\cite{Bachoc:2006aa,Bachoc:2004fk,Bachoc:2002aa,Ehler:2014zl,James:1974aa,Roy:2010fk}. 

Let $Q_\lambda$ be the reproducing kernel of $H_\lambda(\G_{2,4})$. Any orthonormal basis $\{\varphi_{\lambda,l}\}_{l=1}^{\dim(\mathcal{H}^{4}_{2\lambda})}$ for $H_{\lambda}(\G_{2,4})$ yields the spectral decomposition 
\begin{equation}\label{eqLrep kernel Q}
Q_\lambda(x,y) = \sum_{l=1}^{\dim(\mathcal{H}^{4}_{2\lambda})}\varphi_{\lambda,l}(x)\overline{\varphi_{\lambda,l}(y)},\qquad x,y\in\G_{2,4},\quad \lambda_1\geq \lambda_2\geq 0.
\end{equation}
The orthogonal decomposition \eqref{eq:decomp L2 single} leads to the Fourier expansion 
\begin{equation}\label{eq:kernel def gras}
2^{-\frac{p}{2}}\| x -y\|_{\F}^{p}=\sum_{\lambda_1\geq\lambda_2\geq 0}a_\lambda(p,\G_{2,4}) Q_\lambda(x,y),\qquad x,y \in \G_{2,4},\quad p>0.
\end{equation}
The coefficients $a_\lambda(p,\G_{2,4})$ in \eqref{eq:kernel def gras} are defined by
\begin{equation}\label{eq:integ to be}
a_\lambda(p,\G_{2,4}) := \frac{1}{\dim(\mathcal{H}^{4}_{2\lambda})}\iint\limits_{\G_{2,4}\times \G_{2,4}}2^{-\frac{p}{2}}\| x -y\|_{\F}^{p} Q_\lambda(x,y) {\rm d}\sigma_{\G_{2,4}}(x){\rm d}\sigma_{\G_{2,4}}(y).
\end{equation}
In order to determine $a_\lambda(p,\G_{2,4}) $, we shall make use of the hypergeometric coefficients $ (f)_{(\lambda_1,\lambda_2)} := (f)_{\lambda_1} (f-\tfrac12)_{\lambda_2}$.
\begin{thm}\label{thm:G24}
For $p>-4$, 
we have
\begin{equation}\label{eq:a alpha}
  a_\lambda(p,\G_{2,4}) =   \frac{4^{-|\lambda|}|\lambda|!}{(\tfrac{3}{2})_{|\lambda|} (\tfrac{3}{2})_{\lambda}}(-\tfrac{p}{2})_{|\lambda|} 
  \;\pFq{4}{3}{\tfrac{|\lambda|+1}{2},\tfrac{|\lambda|+2}{2},\tfrac{|\lambda|}{2}-\tfrac{p}{4},\tfrac{|\lambda|+1}{2}-\tfrac{p}{4}}{|\lambda|+\tfrac{3}{2},\lambda_1+\tfrac{3}{2},\lambda_2+1}{1}.
\end{equation}
In particular, if $p\not\in 2\N$, then 
\begin{equation} \label{eq:dec-G24}
|a_\lambda(p,\G_{2,4})|= \left|\frac{\Gamma(\frac{p}{2}+2)}{2\Gamma(-\frac{p}{2})}\right|\|\lambda\|^{-(p+4)} (1+o(1)),
\qquad \lambda_1\geq\lambda_2\geq0,
\end{equation}
and the series \eqref{eq:kernel def gras} terminates if $p\in 2\N$. 
\end{thm}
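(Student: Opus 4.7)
The plan is to reduce the double integral \eqref{eq:integ to be} to a single integral on $\G_{2,4}$ via $\OOO(4)$-invariance, express both the distance kernel and the reproducing kernel of $H_\lambda$ in terms of the principal angles, and then evaluate the resulting integral by hypergeometric manipulations. Since both $\sigma_{\G_{2,4}}$ and $Q_\lambda$ are $\OOO(4)$-invariant, fixing any reference $x_0\in\G_{2,4}$ gives
\begin{equation*}
a_\lambda(p,\G_{2,4}) = \frac{1}{\dim(\mathcal{H}^{4}_{2\lambda})}\int_{\G_{2,4}} 2^{-p/2}\|x_0 - y\|_{\F}^{p}\, Q_\lambda(x_0, y)\, {\rm d}\sigma_{\G_{2,4}}(y).
\end{equation*}
Both factors depend only on the principal angles $\theta_1,\theta_2\in[0,\pi/2]$ between the two-planes represented by $x_0$ and $y$. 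Explicitly, $\|x_0-y\|_{\F}^{2}=2(\sin^{2}\theta_{1}+\sin^{2}\theta_{2})$, and the Weyl integration formula for the symmetric pair $(\SO(4),\SO(2)\times\SO(2))$ disintegrates $\sigma_{\G_{2,4}}$ into an explicit weight on $[0,\pi/2]^{2}$ proportional to $|\cos 2\theta_{1}-\cos 2\theta_{2}|\sin 2\theta_{1}\sin 2\theta_{2}$.

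Next, the zonal function $Q_\lambda(x_0,\cdot)$ is, up to a normalization depending on $\dim(\mathcal{H}_{2\lambda}^{4})$, a bivariate Jacobi polynomial $\widetilde{P}_\lambda$ (equivalently, a Heckman--Opdam polynomial for root system $BC_{2}$) in the variables $\cos 2\theta_{1},\cos 2\theta_{2}$. After the substitution $u_{j}=\sin^{2}\theta_{j}$ the integral becomes a Selberg-type integral
\begin{equation*}
a_\lambda(p,\G_{2,4}) = C_\lambda\!\int_{0}^{1}\!\!\int_{0}^{1}(u_{1}+u_{2})^{p/2}\widetilde{P}_{\lambda}(1-2u_{1},1-2u_{2})|u_{1}-u_{2}|\bigl(u_{1}u_{2}(1-u_{1})(1-u_{2})\bigr)^{-1/2}du_{1}du_{2}
\end{equation*}
with an explicit constant $C_\lambda$. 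Expanding $(u_{1}+u_{2})^{p/2}$ by the binomial series and $\widetilde{P}_\lambda$ in the monomial basis of symmetric polynomials, every resulting monomial integral evaluates to a product of Beta functions; a Chu--Vandermonde collapse of one summation index (or a Karlsson--Minton reduction) then rewrites the resulting double sum as the ${}_{4}F_{3}$ at unit argument displayed in \eqref{eq:a alpha}.

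The termination statement for $p\in 2\N$ is immediate because $(-p/2)_{|\lambda|}=0$ for $|\lambda|>p/2$. For the asymptotic \eqref{eq:dec-G24}, the plan is to apply Thomae's ${}_{4}F_{3}$ transformation, with parametric excess $s=2+\tfrac{p}{2}$ conveniently independent of $\lambda$, to produce an equivalent series whose individual terms tend to zero as $|\lambda|\to\infty$ and whose partial sums converge uniformly to $1$; Gauss' duplication formula together with the standard asymptotic $\Gamma(n+a)/\Gamma(n+b)\sim n^{a-b}$ applied both to the prefactor in \eqref{eq:a alpha} and to the gamma-function ratio produced by Thomae's transformation then yields the decay rate $\|\lambda\|^{-(p+4)}$ with the claimed constant $|\Gamma(\tfrac{p}{2}+2)/(2\Gamma(-\tfrac{p}{2}))|$. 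The main obstacle is expected to be the hypergeometric consolidation in step two: many equivalent ${}_{4}F_{3}$ representations coexist and matching the specific form in \eqref{eq:a alpha} requires the right choice of monomial expansion for $\widetilde{P}_\lambda$, plus careful bookkeeping of $C_\lambda$ through the identification $H_\lambda(\G_{2,4})\cong\mathcal{H}_{2\lambda}^{4}$; additionally, making the asymptotic uniform over all directions $\lambda_1\geq\lambda_2\to\infty$ (as opposed to the diagonal $\lambda_1=\lambda_2$ regime) requires checking that the Thomae-transformed series converges uniformly, which will need bounds on its terms as functions of both $\lambda_1$ and $\lambda_2$.
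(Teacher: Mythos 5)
Your formula derivation follows the same high-level strategy as the paper (reduce to a radial integral on $\G_{2,4}$, insert the explicit zonal form of $Q_\lambda$, expand the distance power, sum term by term), but your parametrization is genuinely harder to work with than the paper's. The paper does not use the variables $u_j=\sin^2\theta_j$ with the Weyl weight $|\cos2\theta_1-\cos2\theta_2|\sin2\theta_1\sin2\theta_2$; instead, following Davis (1999), it uses $\xi_\pm=\cos(\theta_1\pm\theta_2)$, and in these variables two things happen simultaneously: the pair measure $\sigma_{\G_{2,4}}\otimes\sigma_{\G_{2,4}}$ reduces to plain Lebesgue measure $d\xi_+\,d\xi_-$ on $\{|\xi_+|\le\xi_-\le1\}$, and $Q_\lambda$ factors as a symmetrized product of single-variable Legendre polynomials $\mathcal{C}^{1/2}_{\lambda_1+\lambda_2}(\xi_\pm)\mathcal{C}^{1/2}_{\lambda_1-\lambda_2}(\xi_\mp)$. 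Moreover $1-\xi_+\xi_-=\sin^2\theta_1+\sin^2\theta_2$, so the distance factor becomes $(1-\xi_+\xi_-)^{p/2}$. After symmetrizing to $[-1,1]^2$, expanding $(1-xy)^{-q}$ as a power series and using orthogonality of Legendre polynomials reduces the whole calculation to one single-index sum, which the paper massages into the ${}_4F_3$ by elementary Pochhammer manipulations. Your route keeps a two-index sum (binomial expansion of $(u_1+u_2)^{p/2}$ times monomial expansion of a genuine bivariate polynomial) and relies on an unstated Chu--Vandermonde or Karlsson--Minton collapse; this may be makeable but you never execute it, and the Heckman--Opdam/Jacobi route bypasses precisely the coordinate change that makes the integral tractable.

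The asymptotic claim is where there is a real gap. You propose a Thomae ${}_4F_3$ transformation (parametric excess $2+\tfrac p2$, which you correctly identify) and then assert the transformed series ``converges uniformly to $1$.'' That is not going to work as stated. The series in \eqref{eq:a alpha} is nonterminating for $p\notin2\N$, and the prefactor contains $4^{-|\lambda|}(-\tfrac p2)_{|\lambda|}|\lambda|!/\bigl((\tfrac32)_{|\lambda|}(\tfrac32)_\lambda\bigr)$, which decays far faster than the target $\|\lambda\|^{-(p+4)}$; hence the ${}_4F_3$ must grow, not settle to a constant. The paper's proof of \eqref{eq:dec-G24} is a genuine Laplace/saddle-point analysis of the series written as $\sum_{k\ge0}S(n,k)$ (with $n=|\lambda|$, $\lambda_1=an$, $\lambda_2=(1-a)n$): Lemma~\ref{lem:1} locates the peak at $k_0\sim\tfrac{2a^2-2a+1}{p+6}\,n^2$, Lemma~\ref{lem:2} establishes unimodality, and Lemma~\ref{lemma:1} uses Stirling to approximate the tail and identify a gamma-function integral, with the direction-dependence in $a$ absorbed into $\|\lambda\|^2=(2a^2-2a+1)n^2$. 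That $k\sim n^2$ scaling is the essential content of the decay estimate, and a two- or three-term ${}_4F_3$ transformation does not produce it directly; you would still need to control a nonterminating hypergeometric sum whose dominant terms drift to infinity with $|\lambda|$. So the formula \eqref{eq:a alpha} is plausibly reachable by your route with significant extra work, but your argument for \eqref{eq:dec-G24} as written does not go through.
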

The proof of this theorem is contained in Section~\ref{sec:A7.1} of
Appendix~\ref{sec:A7}. 
%The verification of \eqref{eq:dec-G24} starts on Page \pageref{proof:G2}. 
If $p\in 2\N$, then $ a_\lambda(p,\G_{2,4})=0$ for all $|\lambda|>\frac{p}{2}$. For $\tau>2$, the Sobolev space $\mathbb{H}^\tau(\G_{2,4})$ is the reproducing kernel Hilbert space with associated reproducing kernel
\begin{equation}\label{eq:coeff G str}
(x,y)\mapsto \sum_{\lambda_1\geq\lambda_2\geq 0}(1+4\lambda_1^2+4\lambda_2^2+4\lambda_1)^{-\tau}Q_\lambda(x,y),\quad x,y\in\G_{2,4},
\end{equation}
cf.~\cite{Brandolini:2014oz,Breger:2016rc}. 
 Since the coefficients in \eqref{eq:coeff G str} behave asymptotically as $\|\lambda\|^{-2\tau}$, the choice $p=1$ in Theorem~\ref{thm:G24} implies that the kernel $K_{\beta_{16,s}}|_{\G_{2,4}\times\G_{2,4}}$ reproduces the Sobolev space $\mathscr{H}_{K_{\beta_{16,s}}}(\G_{2,4})=\mathbb{H}^{\frac{5}{2}}(\G_{2,4})$ with an equivalent norm provided that $s\geq \sqrt{2}$. 

%\begin{remark}
By invoking \cite{Fuselier:2012jt}, we deduce that, for $d\geq 2$, $K_{\beta_{d^2,\sqrt{k}}}|_{\G_{k,d}\times\G_{k,d}}$ reproduces the Sobolev 
space $\mathbb{H}^{\frac{k(d-k)+1}{2}}(\G_{k,d})$ with an equivalent norm. 
%\end{remark}

\subsection{Parametrization of $\G_{2,4}$ by $\faktor{(\mathbb{S}^2\times \mathbb{S}^2)}{\pm 1}$}\label{sec:G24 b}
%We shall provide an explicit parametrization of $\G_{2,4}$ by $\faktor{(\mathbb{S}^2\times \mathbb{S}^2)}{\pm 1}$. 
%Efficient minimization of \eqref{eq:fund eq min} requires the fast Fourier transform, cf.~\cite{Graf:2013zl,Graf:2011lp}, which is not yet available for $\G_{2,4}$. Here, we shall construct a suitable parametrization of $\G_{2,4}$ by $\faktor{(\mathbb{S}^2\times \mathbb{S}^2)}{\pm 1}$ that enables the use of the fast Fourier transform on $\S^2\times\S^2$. The latter shall be designed from the respective transform on $\S^2$ in the subsequent section. 

To derive the nonequispaced fast\break Fourier transform on $\G_{2,4}$, we shall first explicitly construct the parametrization of $\G_{2,4}$ by its double covering $\S^2\times\S^2$. We denote the $d\times d$-identity matrix by $I_d$, and the cross-product between two vectors $x,y\in\S^2$ is denoted by $x\times y\in\R^3$. The mapping $\mathcal{P}:\S^2\times\S^2\rightarrow\G_{2,4}$ given by
\begin{equation}\label{eq:ultimate P}
\begin{aligned}
(x,y) \mapsto 
%& \frac{1}{2}
%\left(
%  \begin{smallmatrix}
%  1+x_1y_1+x_2y_2+x_3y_3   &   x_3y_2-x_2y_3         &   x_1y_3-x_3y_1         &   x_2y_1-x_1y_2         \\
%    x_3y_2-x_2y_3          & 1+x_1y_1-x_2y_2-x_3y_3  &   x_1y_2+x_2y_1         &   x_1y_3+x_3y_1         \\
%    x_1y_3-x_3y_1          &   x_1y_2+x_2y_1         & 1-x_1y_1+x_2y_2-x_3y_3  &   x_2y_3+x_3y_2         \\
%    x_2y_1-x_1y_2          &   x_1y_3+x_3y_1         &   x_2y_3+x_3y_2         & 1-x_1y_1-x_2y_2+x_3y_3  \\
%    \end{smallmatrix}
%    \right) = \\
    & \frac{1}{2}
    \begin{pmatrix}
    1 + x^\top y   &&   - (x \times y)^\top   \\
    - x \times y   & &x y^\top + y x^\top + (1-x^\top y) I_3   \\
    \end{pmatrix}%\in\G_{2,4}
    \end{aligned}
    \end{equation}
is surjective and, for all $x,y,u,v\in\mathbb{S}^2$,
\begin{equation}\label{eq:ambigui0}
\mathcal{P}(u,v)=\mathcal{P}(x,y)\quad \text{ if and only if } \quad (u,v) \in \{\pm (x,y) \},
\end{equation}
see Section~\ref{sec:proofs of sec 7.2} and Theorem~\ref{th:ide th} 
of Appendix~\ref{sec:A7}. 
%Let $\mathcal{P}$ denote the mapping specified in \eqref{eq:ultimate P} throughout the remaining part of this section.
In order to specify the inverse map, note that $\faktor{\mathbb{S}^2\times \mathbb{S}^2}{\pm 1}$ 
can be identified with
$%\begin{equation*}
\mathcal{M}(3):=\{xy^\top\in\mathbb{R}^{3\times 3} : x,y\in\mathbb{S}^2\}.
$ %\end{equation*}
We now define $\mathcal{L}:\mathcal{G}_{2,4}\rightarrow \mathcal{M}(3)$,
\begin{equation}\label{eq:inv of P}
\mathcal{L}(P) :=     \left(\begin{smallmatrix}
       \frac12(P_{11}+P_{22}-P_{33}-P_{44}) & P_{23} - P_{14}                      & P_{24} + P_{13}                   \\
      P_{23} + P_{14}                     & \frac12(P_{11}-P_{22}+P_{33}-P_{44}) & P_{34}- P_{12}                      \\
      P_{24} - P_{13}                     & P_{34} + P_{12}                    & \frac12(P_{11}-P_{22}-P_{33}+P_{44})  \\
    \end{smallmatrix}\right),
  \end{equation}
and direct computations lead to
\begin{equation}\label{eq:L L L}
\mathcal{L}(\mathcal{P}(x,y)) = xy^\top,\quad x,y\in\mathbb{S}^2.
\end{equation}
The right-hand side determines $x$ and $y$ up to the ambiguity \eqref{eq:ambigui0}. Under the Frobenius norm, $\mathcal{P}$ is distance preserving in the sense 
%identification of $\G_{2,4}$ with $\mathcal{M}(3)$, i.e.,
\begin{equation}\label{eq:isometry F}
\|\mathcal{P}(x,y)-\mathcal{P}(u,v)\|_{\F} = \|xy^\top - uv^\top\|_{\F},\quad x,y,u,v\in\mathbb{S}^2.
\end{equation}
The latter follows from \eqref{eq:skal pro vec} in Lemma~\ref{lemma:222} in
Section~\ref{sec:proofs of sec 7.2} of Appendix~\ref{sec:A7}. 

%The proof of Theorem \ref{th:Hpi} is based on the identity \eqref{eq:00} and the following lemma:

We shall now check how the spherical harmonics $Y^m_l$ on $\S^2$ relate to the eigenfunctions $\varphi_{\lambda,l}\in H_\lambda(\G_{2,4})$ of the Laplace--Beltrami operator on $\G_{2,4}$, cf.~\eqref{eqLrep kernel Q}. The functions $Y_{k,l}^{m,n}:\mathcal{G}_{2,4}\to \mathbb C$ given by
  \begin{equation}\label{eq:tensor struct}
    Y_{k,l}^{m,n}(\mathcal{P}(x,y)) := Y_k^m(x) \cdot Y_l^n(y)%\qquad k=-m,\dots,m,\quad l=-n,\dots,n,
  \end{equation}
are well-defined for $m+n \in 2\mathbb N$, the latter taking into account the ambiguity \eqref{eq:ambigui0}. 
\begin{thm}\label{th:Hpi}
For $m_\lambda:=(\lambda_1+\lambda_2)$ and $n_\lambda:=(\lambda_1-\lambda_2)$, we have
\begin{equation}\label{eq:rep of H}
H_{\lambda}(\mathcal{G}_{2,4}) = \mathrm{span}\{ Y_{k,l}^{m_\lambda,n_\lambda}, Y_{l,k}^{n_\lambda,m_\lambda} :
  k=-m_\lambda,\dots,m_\lambda,\; l=-n_\lambda,\dots,n_\lambda \}.
\end{equation}
\end{thm}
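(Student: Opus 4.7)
The plan is to pull back everything along the double covering $\mathcal{P}: \S^2 \times \S^2 \to \G_{2,4}$ and identify $H_\lambda$ through joint spectra of commuting invariant differential operators.

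First, the ambiguity \eqref{eq:ambigui0} forces functions on $\G_{2,4}$ to pull back to functions invariant under $(x, y) \mapsto (-x, -y)$. Since $Y^m_k(-x) = (-1)^m Y^m_k(x)$, the product $Y^m_k(x) Y^n_l(y)$ descends if and only if $m + n$ is even, which holds for $(m, n) = (m_\lambda, n_\lambda)$ and $(n_\lambda, m_\lambda)$ since $m_\lambda + n_\lambda = 2\lambda_1$. Thus the right-hand side of \eqref{eq:rep of H} consists of well-defined functions on $\G_{2,4}$. Next, by the $\OOO(4)$-equivariance of $\mathcal P$ via the double cover $SU(2)\times SU(2)\to \SO(4)$, the pulled-back Laplace--Beltrami operator satisfies $\mathcal P^* \Delta_{\G_{2,4}} = 2(\Delta_{\S^2}^{(x)} + \Delta_{\S^2}^{(y)})$; equal scaling on the two factors is forced by the swap $(x,y)\mapsto(y,x)$ induced, via \eqref{eq:ultimate P}, by $J=\mathrm{diag}(1,-1,-1,-1)\in\OOO(4)$, and the constant $2$ can be pinned down on any test function. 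Applied to $Y^{m_\lambda,n_\lambda}_{k,l}$ this yields eigenvalue $2(m_\lambda(m_\lambda+1)+n_\lambda(n_\lambda+1))=4(\lambda_1^2+\lambda_2^2+\lambda_1)=\alpha_\lambda$, so the span in \eqref{eq:rep of H} lies in the eigenspace $E_{\alpha_\lambda}$.

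Since $\alpha_\lambda$ need not determine $\lambda$ uniquely (for instance $\alpha_{(8,3)}=\alpha_{(7,5)}=324$), I would sharpen this with the auxiliary fourth-order operator $(\Delta_{\S^2}^{(x)} - \Delta_{\S^2}^{(y)})^2$, which is invariant under $(x,y)\mapsto(-x,-y)$ and under the swap $(x,y)\mapsto(y,x)$, and hence descends to a well-defined $\OOO(4)$-invariant operator on $\G_{2,4}$ commuting with $\Delta_{\G_{2,4}}$. Its eigenvalue on $Y^{m_\lambda,n_\lambda}_{k,l}$ is $(m_\lambda(m_\lambda+1)-n_\lambda(n_\lambda+1))^2 = 4\lambda_2^2(2\lambda_1+1)^2$, and the pair $(\alpha_\lambda,\,4\lambda_2^2(2\lambda_1+1)^2)$ determines $\lambda$ uniquely. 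Combined with the multiplicity-one occurrence of $\mathcal H^4_{2\lambda}$ in the decomposition \eqref{eq:decomp L2 single}, this forces the span in \eqref{eq:rep of H} to be contained in $H_\lambda$, and a dimension count of $2(2m_\lambda+1)(2n_\lambda+1)$ when $\lambda_2>0$ (with the two summands coinciding and giving $(2\lambda_1+1)^2$ when $\lambda_2=0$), matching $\dim\mathcal H^4_{2\lambda}$, confirms equality.

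The main obstacle is the pull-back computation for the Laplace--Beltrami operator, particularly verifying the scaling $c=2$, together with checking that $(\Delta_{\S^2}^{(x)} - \Delta_{\S^2}^{(y)})^2$ genuinely descends to an $\OOO(4)$-invariant operator on $\G_{2,4}$ rather than merely to a symmetric second-order-squared object on the cover. Both rely on a careful analysis of the explicit form of $\mathcal P$ in \eqref{eq:ultimate P}, on the $\SO(3)\times\SO(3)$-symmetry of the product metric on $\S^2\times\S^2$, and on matching the normalizations of the $\OOO(4)$-invariant metric and Haar measure that underlie the eigenvalue $\alpha_\lambda$ in the paper's convention.
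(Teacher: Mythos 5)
Your route is genuinely different from the paper's. The paper also pulls everything back along $\mathcal{P}$ and identifies $\mathcal{P}^*$ as an intertwining isomorphism between $\SO(4)/\pm 1$ and $\SO(3)\times\SO(3)$ (via the quaternionic covers), giving the $\SO(4)$-irreducible pieces $H^{m,n}=\mathrm{span}\{Y^{m,n}_{k,l}\}$; the swap symmetry \eqref{eq:22} is then used to assemble the $\OOO(4)$-irreducible pieces $V^{m,n}$. But to match $V^{m_\lambda,n_\lambda}$ with $H_\lambda$ the paper does \emph{not} compute Laplacian eigenvalues: it shows by a polynomial-degree argument that $x^\alpha y^\beta$ with $|\alpha|=m_\lambda$, $|\beta|=n_\lambda$ is a degree-$m_\lambda$ polynomial in the entries of $P$, deducing the filtration inclusion $\bigoplus_{\lambda_1+\lambda_2\le t}V^{m_\lambda,n_\lambda}\subset\bigoplus_{\lambda_1+\lambda_2\le t}H_\lambda$, and then finishes by dimension-counting (Fulton--Harris) and induction on $t$. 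That entirely sidesteps the metric normalization and the Laplacian pull-back constant you flag as your main obstacle.

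There is also a genuine gap in the final step of your argument. Your joint-spectrum computation shows that the span $V:=\mathrm{span}\{Y^{m_\lambda,n_\lambda}_{k,l},Y^{n_\lambda,m_\lambda}_{l,k}\}$ has the joint eigenvalue $\bigl(\alpha_\lambda,\,4\lambda_2^2(2\lambda_1+1)^2\bigr)$ for $\Delta_{\G_{2,4}}$ and the descended operator $D=(\Delta^{(x)}_{\S^2}-\Delta^{(y)}_{\S^2})^2$, and that this pair determines $\lambda$ uniquely. However, to conclude $V\subset H_\lambda$ you need to know the $D$-eigenvalue \emph{of $H_\lambda$ itself} — Schur's lemma tells you $D$ acts as a scalar on $H_\lambda$, but you have not computed that scalar. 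As written, ``multiplicity-one forces $V\subset H_\lambda$'' does not follow: multiplicity-one says each $\OOO(4)$-irreducible occurs at most once, but it does not, by itself, identify which $H_\mu$ your $V$ coincides with. You would need one more ingredient, e.g.\ (a) establish the $D$-eigenvalue on $H_\lambda$ from the explicit reproducing kernel $Q_\lambda$ in \eqref{eq:K lambda}, or (b) first prove $V$ is $\OOO(4)$-irreducible (as the paper does, using that $H^{m_\lambda,n_\lambda}$ and $H^{n_\lambda,m_\lambda}$ are inequivalent $\SO(4)$-representations swapped by $\mathcal{I}_-$), so that $V=H_\mu$ for a unique $\mu$, and then pin down $\mu=\lambda$ from $\alpha_\mu=\alpha_\lambda$ together with $\dim V=\dim\mathcal{H}^4_{2\lambda}$ (these two constraints do determine $\lambda$: writing $u=2\lambda_1+1$, $v=2\lambda_2$, one has $u^2+v^2=4\alpha_\lambda+1$ and, for $\lambda_2>0$, $u^2-v^2=\tfrac12\dim\mathcal{H}^4_{2\lambda}$), or (c) fall back on the paper's degree filtration. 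Option (b) is closest in spirit to what you wrote and would make the auxiliary fourth-order operator unnecessary; option (a) keeps your approach but demands one further nontrivial computation. Either way the proof as stated is incomplete at this step, though the conceptual pieces you assembled — parity of the pull-back, equivariance, Laplacian eigenvalue $\alpha_\lambda$, and the dimension count — are all correct.
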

The proof is presented 
at the end of Section~\ref{sec:proofs of sec 7.2} of Appendix~\ref{sec:A7}. 
%\begin{remark}
Note that the geodesic distance on $\G_{2,4}$ is $\dist_{\G_{2,4}}(P,Q)=\sqrt{2}\sqrt{\theta_1^2+\theta_2^2}$, where $\theta_1,\theta_2\in[0,\pi/2]$ are the principal angles determined by the two largest eigenvalues $\cos^2(\theta_1)$ and $\cos^2(\theta_2)$ of the matrix $PQ$. 
Aside from \eqref{eq:isometry F}, $\mathcal{P}$ is also distance-preserving with respect to the respective geodesic distances, i.e.,\footnote[3]{The geodesic distance on $\S^2$ induces the geodesic distance on $\faktor{\mathbb{S}^2\times \mathbb{S}^2}{\pm 1}$ by
\begin{equation*}
\dist^2_{\faktor{\mathbb{S}^2\times \mathbb{S}^2}{\pm 1}}\big(\left(\begin{smallmatrix} x\\y\end{smallmatrix}\right),\left(\begin{smallmatrix} u\\v\end{smallmatrix}\right)\big)=\min\big\{\dist^2_{\S^2}(x,u)+\dist^2_{\S^2}(y,v),\dist^2_{\S^2}(-x,u)+\dist^2_{\S^2}(-y,v)\big\}.
\end{equation*}
} 
\begin{equation*}
\dist_{\G_{2,4}}(\mathcal{P}(x,y),\mathcal{P}(u,v)) = \dist_{\faktor{\mathbb{S}^2\times \mathbb{S}^2}{\pm 1}}\big(\left(\begin{smallmatrix} x\\y\end{smallmatrix}\right),\left(\begin{smallmatrix} u\\v\end{smallmatrix}\right)\big),\quad x,y,u,v\in\S^2.
\end{equation*} 
This equality follows from \eqref{eq:ambi scal xi} in Lemma~\ref{lemma:222} in the appendix 
via further direct calculations. 
%\end{remark}

%\begin{remark}\label{rem:phi to Y}
The identity \eqref{eq:rep of H} provides explicit expressions for the orthonormal basis $\{\varphi_{\lambda,l}\}_{l=1}^{\dim(\mathcal{H}^{4}_{2\lambda})}$ of $H_{\lambda}(\G_{2,4})$ that is used to construct the reproducing kernel $Q_\lambda$ in \eqref{eqLrep kernel Q}. It also provides a fast Fourier transform on $\G_{2,4}$ from the respective transform on $\S^2\times\S^2$ that is developed in the subsequent section.
%\end{remark}

\subsection{Nonequispaced Fast Fourier Transform on $\G_{2,4}$}\label{sec:FFT}
The nonequispaced fast (spherical) Fourier transform on $\S^2$ has been developed in \cite{Keiner:2009cv,Kunis:2003pt} under the acronym 
\texttt{nfsft}. Here, we shall derive the analogous transform on $\S^2\times\S^2$, which induces the nonequispaced fast Fourier transform on $\G_{2,4}$ via the mapping $\mathcal{P}$ and \eqref{eq:rep of H} with \eqref{eq:tensor struct}.

For a given finite set of coefficients
$f^{m_1,m_2}_{k,l}\in\mathbb{C}$, $m_1,m_2=0,\ldots,M$, $k=-m_1,\ldots,m_1$, $l=-m_2,\ldots,m_2$, 
we aim to evaluate
\begin{equation}\label{eq:f at scatter2}
F(x,y):=\sum_{m_1,m_2=0}^M \sum_{k=-m_1}^{m_1} \sum_{l=-m_2}^{m_2} f^{m_1,m_2}_{k,l} Y^{m_1}_k(x)Y^{m_2}_l(y)
\end{equation}
at $n$ scattered locations $(x_j,y_j)_{j=1}^n\subset\S^2\times\S^2$. 
%For $b_i\in\mathbb{C}$, $i=1,\ldots,M$, we also aim to compute
%\begin{equation}\label{eq:adjoit fft}
%\hat{b}^{m,n}_{k,l}:=\sum_{i=1}^M b_i \overline{Y^m_k(x_i)Y^n_l(y_i)}.
%\end{equation}
%Let us first take care of \eqref{eq:f at scatter2}. 
Direct evaluation of \eqref{eq:f at scatter2} leads to $O(n M^4)$ operations. We shall now derive an approximative algorithm that is more efficient for $n\gg M$. 

By following the ideas in \cite{Keiner:2009cv,Kunis:2003pt}, switching to spherical coordinates reveals that \eqref{eq:f at scatter2} is a $4$-dimensional trigonometric polynomial. This enables the use of the $4$-dimensional nonequispaced fast Fourier transform \texttt{nfft} to significantly reduce the complexity. In spherical coordinates the spherical harmonics are trigonometric polynomials such that
%\begin{equation*}
%z(\theta,\varphi) = \begin{pmatrix}
%\sin(\theta)\cos(\varphi)\\
%\sin(\theta)\sin(\varphi)\\
%\cos(\theta)
%\end{pmatrix}
%\in\S^2,\quad 0\leq\theta\leq\pi,\quad 0\leq\varphi\leq 2\pi,
%\end{equation*}
\begin{equation}\label{eq:Y in e}
Y^m_k(z(\theta,\varphi)) =e^{{\rm i}k\varphi} \sum_{k'=-m}^m c^m_{k,k'}e^{{\rm i}k'\theta},\qquad z(\theta,\varphi) = \left(\begin{smallmatrix}
\sin(\theta)\cos(\varphi)\\
\sin(\theta)\sin(\varphi)\\
\cos(\theta)
\end{smallmatrix}
\right)
\in\S^2,
\end{equation}
where $0\leq\theta\leq\pi$, $0\leq\varphi\leq 2\pi$, and $c^m_{k,k'}\in\mathbb{C}$ are suitable coefficients that we assume to be given or precomputed. Thus, for
$
x = z(\theta_1,\varphi_1)$ and $y=z(\theta_2,\varphi_2)$,  
there are coefficients $b^{k,l}_{k',l'}\in\mathbb{C}$ such that
\begin{equation}\label{eq:bsss}
F(x,y)=\sum_{k,l,k',l'=-M}^M b^{k,l}_{k',l'} e^{{\rm i}k\varphi_1}e^{{\rm i}l\varphi_2} e^{{\rm i} k'\theta_1}e^{{\rm i} l'\theta_2}.
\end{equation}
We check 
in Section~\ref{app:sec:FFT} of Appendix~\ref{sec:A7} that the set of coefficients $b^{k,l}_{k',l'}$ can be computed by $O(M^5)$ operations provided that the numbers  $c^m_{k,k'}$ in \eqref{eq:Y in e} are given. The expression \eqref{eq:bsss} can be evaluated at $n$ scattered locations by the nonequispaced discrete Fourier transform \texttt{ndft} with $O(n M^4)$ operations, cf.~\cite{Keiner:2009cv,Kunis:2003pt}. An efficient  approximative algorithm is the nonequispaced fast Fourier transform \texttt{nfft} that requires only $O(M^4\log(M)+n|\log(\epsilon)|^4)$ operations with accuracy $\epsilon$, see \cite{Keiner:2009cv,Kunis:2003pt} for details on accuracy. Thus, our algorithm for evaluating \eqref{eq:f at scatter2} at $n$ scattered locations requires $O(M^5+n|\log(\epsilon)|^4)$ operations. For $n\gg M$, this is a significant reduction in complexity compared to the original $O(nM^4)$ operations. We shall choose $n\sim M^4$ in the subsequent section, so that the complexity is reduced from $O(M^8)$ to $O(M^5+M^4|\log(\epsilon)|^4)$ operations. For potential further reduction, we refer to Remark \ref{rem:app poly transform} in the appendix.

%
%
%We still need to add the complexity of computing the coefficients $b^{k,l}_{k',l'}$ that correspond to the transition from $\{Y^m_k(x):m=0,\ldots,N,\; k=-m,\ldots,m\}$ in \eqref{eq:f at scatter2} to $\{e^{{\rm i}k\varphi}e^{{\rm i}l\theta}: k,l=-N,\ldots,N \}$ in \eqref{eq:bsss}. We check in the appendix on Page \pageref{app:sec:FFT} that they can be computed by $O(N^5)$ operations provided that the numbers  $c^m_{k,k'}$ in \eqref{eq:Y in e} are given. 
%

\subsection{Numerical example on $\G_{2,4}$}
By Theorem~\ref{thm:G24}, we can calculate the coefficients of the kernel expansion
\begin{equation*}
K_{\beta_{16,\sqrt{2}}}(x,y) = \sum_{\lambda_1\geq\lambda_2\geq 0} a_\lambda \sum_{l=1}^{\dim(\mathcal{H}^{d}_{2\lambda})} \varphi_{\lambda,l}(x)\overline{\varphi_{\lambda,l}(y)},\quad x,y\in\mathcal{G}_{2,4}.
\end{equation*}
The eigenfunctions $\varphi_{\lambda,l}$ are given by the tensor products of spherical harmonics in \eqref{eq:tensor struct}, cf.~Theorem~\ref{th:Hpi}. For $\supp(\mu),\supp(\nu_n)\subset\G_{2,4}$, the $L_2$-discrepancy \eqref{eq:fund eq min} of the kernel $K_{\beta_{16,\sqrt{2}}}|_{\G_{2,4}\times\G_{2,4}}$ is 
\begin{equation}\label{eq:series truncating}
\mathscr{D}_{\beta_{16,\sqrt{2}}}(\mu,\nu_n) = \sum_{\lambda_1\geq\lambda_2\geq 0} a_\lambda \sum_{l=1}^{\dim(\mathcal{H}^{d}_{2\lambda})}  \left|\hat{\mu}_{\lambda,l}- \frac{1}{n}\sum_{j=1}^n \overline{\varphi_{\lambda,l}(x_j)}\right|^2,
\end{equation}
where $\hat{\mu}_{\lambda,l}$ is the Fourier coefficient of $\mu$ with respect to $\varphi_{\lambda,l}$, cf.~\eqref{eq:fund eq min}.  

Let us consider $\mu=\sigma_{\G_{2,4}}$. 
According to \cite{Brandolini:2014oz} (see also \cite{Breger:2016vn,Breger:2016rc}), 
the lower bound 
\begin{equation*}
n^{-5/4}\lesssim \mathscr{D}_{\beta_{16,\sqrt{2}}}(\mu,\nu_n)
\end{equation*}
holds for all $n$-point sets $\{x_1,\ldots,x_n\}\subset\G_{2,4}$. We truncate the series \eqref{eq:series truncating} and let $\nu^{M}_n=\frac{1}{n}\sum_{j=1}^n\delta_{x^M_j}$ denote a minimizer of 
\begin{equation}\label{eq:min to zero}
\sum_{\lambda+\lambda_2\leq M} a_\lambda \sum_{l=1}^{\dim(\mathcal{H}^{d}_{2\lambda})}   \left|\hat{\mu}_{\lambda,l} - \frac{1}{n}\sum_{j=1}^n \overline{\varphi_{\lambda,l}(x_j)}\right|^2
\end{equation}
among all $n$-point sets $\{x_1,\ldots,x_n\}\subset\G_{2,4}$. A suitable choice $n\sim M^4 $ leads to the optimal rate 
\begin{equation}\label{eq:123454321}
\mathscr{D}_{\beta_{16,\sqrt{2}}}(\mu,\nu^{M}_n)\sim  
n^{-5/4},
\end{equation}
cf.~\cite{Brandolini:2014oz,Etayo:2016qq}. 
Note that we can efficiently solve  the least squares minimization \eqref{eq:min to zero} by using the nonequispaced fast Fourier transform on $\G_{2,4}$ derived from the nonequispaced fast Fourier transform on $\S^2\times\S^2$ of Section~\ref{sec:FFT} and applying Theorem~ \ref{th:Hpi}. Figure~\ref{fig:Grass} shows logarithmic plots of the number of points versus the $L_2$-discrepancy. We observe a line with slope $-5/4$ as predicted by \eqref{eq:123454321}.

\begin{figure}
\includegraphics[width=.5\textwidth]{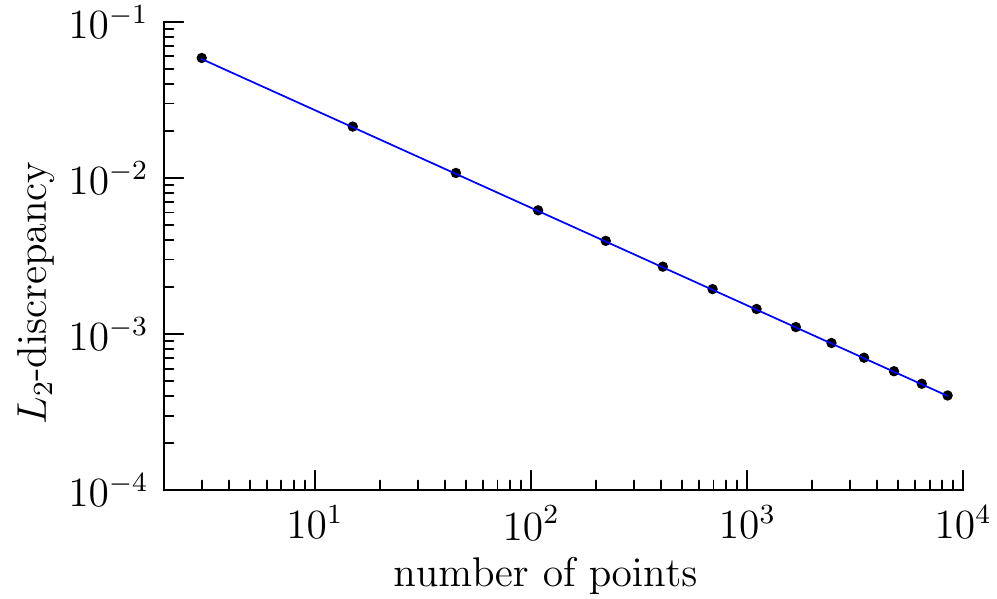}
\caption{Logarithmic plot of the number of points $n$ versus the $L_2$-discrepancy $\mathscr{D}_{\beta_{16},\sqrt{2}}(\mu,\nu^{M}_n)$ on $\G_{2,4}$, where $\nu^{M}_n$ is derived from numerical minimization. }\label{fig:Grass}
\end{figure}

\section*{Acknowledgement}
ME and MG have been funded by the Vienna Science and Technology Fund (WWTF) through project VRG12-009. The research of CK has been partially supported by the Austrian Science Foundation (FWF) 
grant SFB~F50 in the framework of the
Special Research Program
``Algorithmic and Enumerative Combinatorics". ME~would like to thank Christine Bachoc, Karlheinz Gr\"ochenig, Andreas Klotz, and Gerald Teschl for helpful discussions.

\appendix
\section{Proofs for Section \ref{sec:intro 2}}
\label{sec:Askey}
\begin{proof}[Proof of Theorem~\ref{thm:Askey}]\label{proof:Askey}
Let $h_d:[0,\infty)\rightarrow\R$ denote Euclid's hat function given by 
\begin{equation*}
h_d(\|x\|)=\frac{1}{\vol(\mathbb{B}^d_{1/2})}(1_{\mathbb{B}^d_{1/2}} * 1_{\mathbb{B}^d_{1/2}})(x),\quad  x\in\R^d,
\end{equation*}
where $1_{\mathbb{B}^d_{1/2}}$ is the indicator function of $\mathbb{B}^d_{1/2}$. 
For $x,y\in\R^d$ and $t=\|x-y\|$, we derive
\begin{align*}
K_{d}(x,y) & = \int_0^\infty \frac{1}{\vol(\mathbb{B}^d_{r/2})} \int_{\R^d} 1_{\mathbb{B}^d_{r/2}(z)}(x)1_{\mathbb{B}^d_{r/2}(z)}(y) {\rm d} z \,{\rm d}G_d(r)  =\int_t^{\infty} h_d(t/r){\rm d}G_d(r)\\
& = \int_t^{1} h_d(t/r)G_d'(r) {\rm d}r -h_d(t)G_d(1) =t\int_t^{1} r^{-2}h_d'(t/r)G_d(r) {\rm d}r,
\intertext{where the last equality is due to partial integration. 
An explicit expression 
for $h'_d$ is stated in \cite[Equation~(11)]{Gneiting:1999ay}, so that we obtain}
K_{d}(x,y) %& =t\int_t^{1} r^{-2}h_d'(t/r)G_d(r) {\rm d}r\\
& =\frac{d\,\Gamma(\frac{d}{2})}{\sqrt{\pi}\Gamma(\frac{d+1}{2})} t \int_1^{\frac{1}{t}} (1-r^2t^2)^{\frac{d-1}{2}} \pFq{2}{1}{-\frac{d+1}{4},-\frac{d-1}{4}}{-\frac{d}{2}}{r^{-2}} {\rm d}r \\
& = \frac{d\,\Gamma(\frac{d}{2})}{\sqrt{\pi}\Gamma(\frac{d+1}{2})} t  \sum_{j=0}^{\frac{d-1}{2}}(-1)^j\binom{\frac{d-1}{2}}{j}t^{2j} \sum_{l=0}^{\frac{d+1}{4}} \frac{(-\frac{d+1}{4})_l (-\frac{d-1}{4})_l}{(-\frac{d}{2})_l l!}  \int_1^{\frac{1}{t}} r^{2(j-l)}  {\rm d}r \\
%& = \frac{d\,\Gamma(\frac{d}{2})}{\sqrt{\pi}\Gamma(\frac{d+1}{2})} \sum_{j=0}^{\frac{d-1}{2}}(-1)^j\binom{\frac{d-1}{2}}{j} \sum_{l} \frac{(-\frac{d+1}{4})_l (-\frac{d-1}{4})_l}{(-\frac{d}{2})_l}  \frac{t^{2l}}{2(j-l)+1}-\\
%& \qquad- \frac{d\,\Gamma(\frac{d}{2})}{\sqrt{\pi}\Gamma(\frac{d+1}{2})} \sum_{j=0}^{\frac{d-1}{2}}(-1)^j\binom{\frac{d-1}{2}}{j}t^{2j+1} \sum_{l} \frac{(-\frac{d+1}{4})_l (-\frac{d-1}{4})_l}{(-\frac{d}{2})_l}  \frac{1}{2(j-l)+1}\\
& = \frac{d\,\Gamma(\frac{d}{2})}{\sqrt{\pi}\Gamma(\frac{d+1}{2})} \sum_{l=0}^{\frac{d+1}{4}} t^{2l} \frac{(-\frac{d+1}{4})_l (-\frac{d-1}{4})_l}{(-\frac{d}{2})_l l!}  \sum_{j=0}^{\frac{d-1}{2}} \frac{(-1)^j\binom{\frac{d-1}{2}}{j} }{2j-2l+1}\\
& \qquad\qquad- \frac{d\,\Gamma(\frac{d}{2})}{\sqrt{\pi}\Gamma(\frac{d+1}{2})} \sum_{j=0}^{\frac{d-1}{2}}(-1)^j\binom{\frac{d-1}{2}}{j}t^{2j+1} \sum_{l=0}^{\frac{d+1}{4}} \frac{(-\frac{d+1}{4})_l (-\frac{d-1}{4})_l}{(-\frac{d}{2})_l l! (2j-2l+1)}  .
\end{align*}
We now compare coefficients of powers of~$t$ with those of the polynomial $(1-t)^{\frac{d+1}{2}}$. In order to check the coefficient of $t^{2l}$, we first observe 
\begin{align*}
\sum_{j=0}^{\frac{d-1}{2}}  \frac{(-1)^j\binom{\frac{d-1}{2}}{j}}{2j-2l+1} =
\pFq{2}{1}{-\frac{d-1}{2},\frac{1}{2}-l}{\frac{3}{2}-l}{1}\frac{1}{1-2l}
%& = \frac{(1)_{\frac{d-1}{2}}}{(\frac{3}{2}-l)_{\frac{d-1}{2}}}\frac{1}{1-2l}\\
%& = \frac{\Gamma(\frac{d+1}{2})}{\Gamma(\frac{d}{2}-l+1)}\frac{\Gamma(\frac{3}{2}-l)}{1-2l}\\
=\frac{\Gamma(\frac{d+1}{2})  \Gamma(\frac{1}{2}-l)}{2\Gamma(\frac{d}{2}-l+1)},
%\sum_{j=0}^{\frac{d-1}{2}} \frac{(-\frac{d-1}{2})_j (-\frac{1}{2}-l)_j}{(\frac{1}{2}-l)_jj!}\frac{1}{1+2l}
\end{align*}
where the last equality makes use of Gauss' Theorem for the hypergeometric series
evaluated at~1 (cf.\ \cite[Equation~(1.7.6); Appendix~(III.3)]{SlatAC}). 
Direct computation yields
\begin{equation*}
%\sum_{j=0}^{\frac{d-1}{2}}  \frac{(-1)^j\binom{\frac{d-1}{2}}{j}}{2j-2l+1} =  
%%\frac{(-1)^{\frac{d+1}{2}}\Gamma(\frac{d+1}{2})2^{\frac{d-1}{2}}}{(2l-1)(2l-3)\cdots(2l-d)}=
%%\frac{\Gamma(\frac{d+1}{2})}{2(\frac{1}{2}-l)_{\frac{d+1}{2}}}
%\frac{\Gamma(\frac{d+1}{2})  \Gamma(\frac{1}{2}-l)}{2\Gamma(\frac{d}{2}-l+1)}
%,\qquad 
\frac{(-\frac{d+1}{4})_l (-\frac{d-1}{4})_l}{(-\frac{d}{2})_l}
=
%\frac{(-1)^l}{2^{2l}}\frac{(\frac{d+1}{2})(\frac{d+1}{2}-1)\cdots(\frac{d+1}{2}-2l+1)}{(\frac{d}{2})(\frac{d}{2}-1)\cdots(\frac{d}{2}-(l-1))}=
\binom{\frac{d+1}{2}}{2l}\frac{(2l)!}{(-4)^{l}} \frac{\Gamma(\frac{d}{2}-l+1)}{\Gamma(\frac{d}{2}+1)},
\end{equation*} 
so that the coefficient of $t^{2l}$ in $K_{d}(x,y) $ is 
\begin{equation*}
\binom{\frac{d+1}{2}}{2l} \frac{(2l)!}{(-4)^l l!} \frac{ \Gamma(\frac{1}{2}-l) }{\Gamma(\frac{d}{2}+1)  2} \frac{d\,\Gamma(\frac{d}{2})}{\sqrt{\pi}} = \binom{\frac{d+1}{2}}{2l} \frac{(2l)!}{(-4)^{l} l!} \frac{\Gamma(\frac{1}{2}-l) }{ \sqrt{\pi}}
%&=\binom{\frac{d+1}{2}}{2l}\frac{(2l)!}{\sqrt{\pi}(-4)^{l} l!}\Gamma(\frac{1}{2}-l)
=\binom{\frac{d+1}{2}}{2l}.
\end{equation*}
Hence, the coefficients for even powers of $t$ match. To check the coefficient of $t^{2j+1}$, we first assume $\frac{d-1}{4}\in \N$. 
The Pfaff--Saalsch\"utz Theorem (cf.\ \cite[Equation~(2.3.1.3); Appendix~(III.2)]{SlatAC}) yields
\begin{align*}
\sum_{l=0}^{\frac{d+1}{4}} \frac{(-\frac{d+1}{4})_l (-\frac{d-1}{4})_l}{(-\frac{d}{2})_l l! (2j-2l+1)}& = \pFq{3}{2}{-\frac{d+1}{4},-\frac{d-1}{4},-\frac{1}{2}-j}{-\frac{d}{2},\frac{1}{2}-j}{1}\frac{1}{1+2j} \\
&=
\frac{(-\frac{d-1}{4})_{\frac{d-1}{4}}(j-\frac{d-1}{2})_{\frac{d-1}{4}} }{(-\frac{d}{2})_{\frac{d-1}{4}}(j-\frac{d-3}{4})_{\frac{d-1}{4}}}\frac{1}{1+2j}.
%& = (-1)^{\frac{d-1}{4}}(\frac{d-1}{4})! \frac{\Gamma(-\frac{d-1}{4}+j)\Gamma(-\frac{d-1}{4}+\frac{1}{2}+j)\Gamma(-\frac{d}{2})}{\Gamma(-\frac{d-1}{2}+j)\Gamma(\frac{1}{2}+j)\Gamma(-\frac{d+1}{4})} 
\end{align*}
Thus, the coefficient of $t^{2j+1}$ in $K_{d}(x,y)$ is nonzero if and only if $j\leq\frac{d-1}{4}$. Moreover, it is given by 
\begin{align*}
\frac{-d\,\Gamma(\frac{d}{2})(-1)^j\binom{\frac{d-1}{2}}{j}(-\frac{d-1}{4})_{\frac{d-1}{4}}(j-\frac{d-1}{2})_{\frac{d-1}{4}} }
{\sqrt{\pi}\Gamma(\frac{d+1}{2})(1+2j)(-\frac{d}{2})_{\frac{d-1}{4}}(j-\frac{d-3}{4})_{\frac{d-1}{4}}}
=\frac{-\frac{d}{\sqrt{\pi}}\,\Gamma(\frac{d}{2})(-1)^j (\frac{d-1}{4})!(\frac{d+3}{4}-j)_{\frac{d-1}{4}} }
{(\frac{d-1}{2}-j)! j!(1+2j)(\frac{d+1}{4}+1)_{\frac{d-1}{4}}(\frac{1}{2}-j)_{\frac{d-1}{4}}}.
%- \frac{d\,\Gamma(\frac{d}{2})}{\sqrt{\pi}\Gamma(\frac{d+1}{2})}\frac{(-1)^j (\frac{d-1}{2})!}{(\frac{d-1}{2}-j)! j!(1+2j)}\frac{(\frac{d-1}{4})!(\frac{d-1}{4}+1-j)_{\frac{d-1}{4}} }{(\frac{d+1}{4}+1)_{\frac{d-1}{4}}(\frac{1}{2}-j)_{\frac{d-1}{4}}} \\
%=- \frac{d\,\Gamma(\frac{d}{2})}{\sqrt{\pi}\Gamma(\frac{d+1}{2})}\frac{(-1)^j (\frac{d-1}{2})!}{(\frac{d-1}{2}-j)! j!(1+2j)}\frac{(\frac{d-1}{4})!\Gamma(\frac{d-1}{2}+1-j)\Gamma(\frac{d+1}{4}+1)}{\Gamma(\frac{d-1}{4}+1-j)\Gamma(\frac{d}{2}+1)(\frac{1}{2}-j)_{\frac{d-1}{4}}} \\
%=- \frac{2(-1)^j }{ (1+2j)}\frac{(\frac{d-1}{4})!\Gamma(\frac{d+1}{4}+1)(-4)^j }{(2j)!\Gamma(\frac{d-1}{4}+1-j)\Gamma(\frac{d+1}{4}-j)} \\
%=- \frac{2(-1)^j \Gamma(\frac{d}{4}+\frac{3}{4})\Gamma(\frac{d}{4}+\frac{3}{4}+\frac{2}{4})(-4)^j }{(2j+1)!\Gamma(\frac{d-1}{4}+1-j)\Gamma(\frac{d+1}{4}-j)} \\
%=- \frac{\Gamma(\frac{d+3}{2})  }{(2j+1)!\Gamma(\frac{d+1}{2}-2j)} \\
%= - \frac{(\frac{d+1}{2})!}{(2j+1)! (\frac{d+1}{2}-2j-1)!}\\
%=-\binom{\frac{d+1}{2}}{2j+1}
%,
\end{align*}
Further computations using the duplication formula eventually lead to $-\binom{\frac{d+1}{2}}{2l+1}$. The case $\frac{d+1}{4}\in \N$ is checked analogously, and we conclude the proof. 
%\begin{align*}
%\sum_{l=0}^{\frac{d+1}{4}} \frac{(-\frac{d+1}{4})_l (-\frac{d-1}{4})_l}{(-\frac{d}{2})_l l! (2j-2l+1)} = \frac{\pFq{3}{2}{-\frac{d+1}{4},-\frac{d-1}{4},-\frac{1}{2}-j}{-\frac{d}{2},\frac{1}{2}-j}{1}}{1+2j}=\begin{cases}
%\frac{(-\frac{d-1}{4})_{\frac{d-1}{4}}(-\frac{d-1}{2}+j)_{\frac{d-1}{4}} }{(-\frac{d}{2})_{\frac{d-1}{4}}(-\frac{d-1}{4}+\frac{1}{2}+j)_{\frac{d-1}{4}}},& \frac{d-1}{4}\in \N\\
%-\frac{(-\frac{d+1}{4})_{\frac{d+1}{4}}(-\frac{d-1}{2}+j)_{\frac{d+1}{4}}}{(-\frac{d}{2})_{\frac{d+1}{4}}(\frac{1}{2}-j)_{\frac{d+1}{4}}},& \frac{d+1}{4}\in \N
%\end{cases}.
%\end{align*}
%Analogous to the above computations, one now checks that the coefficient of $t^{2l+1}$ is $-\binom{\frac{d+1}{2}}{2l+1}$.  
%One checks
%\begin{equation*}
%\frac{d\,\Gamma(\frac{d}{2})}{\sqrt{\pi}\Gamma(\frac{d+1}{2})}\sum_{l=0}^{\frac{d+1}{4}} \frac{(-\frac{d+1}{4})_l (-\frac{d-1}{4})_l}{(-\frac{d}{2})_l l! (2j-2l+1)} = \frac{\frac{d+1}{2}}{2^{\frac{2n - 1 + (-1)^n}{4}}}\frac{(j-3)(j-4)\cdots (j-\frac{d-1}{2})}{(2j+1)(2j-1)\cdots(2j-\frac{d-1}{2})}
%\end{equation*}
\end{proof}
\section{Proofs for Section \ref{sec:sphere}}
\label{sec:A3}
\begin{proof}[Proof of Proposition~\ref{prop:010}]\label{proof:last}
By expressing the $\R^d$-Fourier transform $\widehat{1_{\mathbb{B}^d_{r}}}(\xi)$ in terms of the Bessel function of the first kind of order $d/2$ and using its asymptotics, we deduce
%At least up to a constant factor, the Fourier transform $\widehat{1_{\mathbb{B}^d_{r}}}(\xi)$ equals $\|\xi\|^{-\frac{d}{2}}J_{d/2}(\|\xi\|)$, where $J_{d/2}$ denotes the Bessel function of the first kind of order $d/2$. Its asymptotics yield 
\begin{equation*}
\left|\widehat{1_{\mathbb{B}^d_{r}}}(\xi)\right| \lesssim\left(1+\|\xi\|^2\right)^{-(d+1)/4}, \quad\xi\in\R^d.
\end{equation*}
Due to the 
zeros of the Bessel function, the respective lower bound cannot hold. This implies the embedding claims for $\mathbb{H}^{\frac{d+1}{2}}(\R^d)$,  in particular, for $d=3$. 

To address the restricted kernel, we observe that the value $K_{d,r}(x,y)$ coincides with the $\R^d$-volume of the two intersecting balls $\mathbb{B}^d_{r}(x)\cap \mathbb{B}^d_{r}(y)$. This volume has been explicitly computed in \cite[Section~2.4.3]{Graf:2013zl}. For $d=3$, we obtain
\begin{equation*}
K_{3,r}(x,y)=\pi(2r-\|x-y\|)^2(\|x-y\|+4r)/12,\quad x,y\in\S^2,
\end{equation*}
so that $K_{3,r}|_{\S^{2}\times\S^{2}}$ is a polynomial of degree $3$ in $\|x-y\|$. Its Fourier coefficients $(a_m)_{m\in\N}$ are linear combinations of the Fourier coefficients of the monomial terms, so that \eqref{eq:as sphe} implies $a_m\lesssim m^{-3}$. 

We have checked that there are no cancelations in these linear combinations. Therefore, the asymptotics \eqref{eq:as sphe} also imply the associated bound from below, which leads to $a_m \sim m^{-3}$. Thus, $K_{3,r}|_{\S^{2}\times\S^{2}}$ reproduces the Sobolev space $\mathbb{H}^{\frac{3}{2}}(\S^{2})$ with equivalent norms. 
\end{proof}

\section{Proofs for Section \ref{sec:b}}
\label{sec:A4}
\begin{proof}[Proof of Proposition~\ref{prop:interval}]\label{proof:interval}
Let us consider $s=1$. The general case follows from rescaling. In order to determine the Fourier expansion of $K_{\beta_{1,1}}|_{[-1,1]\times[-1,1]}$, we need to determine the eigenfunctions with respect to the positive eigenvalues of the integral operator
\begin{equation*}
T : L_2([-1,1])\rightarrow \mathscr{C}([-1,1]),\quad f\mapsto \int_{-1}^{1} f(t)\left(1-\tfrac{1}{2}|\cdot-t|\right) {\rm d}t.
\end{equation*}
For $\lambda>0$, the equation $T\phi=\lambda\phi$ is equivalent to the associated Sturm--Liouville eigenvalue problem. Indeed, 
differentiating twice on both sides and carrying out a short calculation,
we arrive at the second order homogeneous differential equation 
\begin{equation}\label{eq:diff 2}
\phi''(x) +\frac{1}{\lambda} \phi(x)=0,\quad x\in(-1,1),
\end{equation}
with boundary conditions 
$ %\begin{equation}\label{eq:side constraints}
\phi'(1) = - \phi'(-1)$ and 
%\qquad  
$\phi(1) + \phi(-1) = -2\phi'(1). 
$ %\end{equation}
The general solution of \eqref{eq:diff 2} is 
$ %\begin{equation*}
\phi(x)=c_1\cos(\frac{x}{\sqrt{\lambda}})+c_2\sin(\frac{x}{\sqrt{\lambda}}).
$ %\end{equation*}
Direct calculations using the boundary conditions determine $c_1,c_2$ and $\lambda$, so that Mercer's Theorem and normalization of the eigenfunctions provide the claimed Fourier expansion of the kernel. 

Computations analogous to \cite[Section~9.5.5]{Nowak:2010rr} and \cite{Dick:2014bj} verify the claimed form of the reproducing kernel Hilbert space.
\end{proof}

%\begin{proof}[Proof of Theorem \ref{thm:compact set}]\label{proof:norm}
%For $x,y\in \mathbb{B}^d_s$, interchanging the order of integration leads to 
%\begin{align*}
%K_{\beta_{d,s}}(x,y) 
%&= \int_{\S^{d-1}}\int_{-{s}}^{\min\{\langle z,x\rangle,\langle z,y\rangle \}} {\rm d} r \;{\rm d} \sigma_{d-1}(z)
%& ={s}  + \int_{\S^{d-1}} \min\{\langle z,x\rangle,\langle z,y\rangle \}{\rm d} \sigma_{d-1}(z).
%\end{align*}
%This extends the respective computations in \cite{Brauchart:2013pt} from $x,y\in\S^{d-1}$ to $\mathbb{B}^d_s$. The remaining proof is now analogous to \cite{Brauchart:2013pt} and omitted. 
%\end{proof}

\section{Proofs for Section \ref{sec:ball}}\label{sec:app sec ball}
The two linearly independent eigenfunctions of the differential operator 
\begin{equation}\label{eq:DM}
D_m := \partial^{2}_r + \tfrac{d-1}{r}\partial_r - \tfrac{m(m+d-2)}{r^{2}}
\end{equation}
on $[0,1]$ with respect to a possibly complex eigenvalue $-\omega^2$ are
\begin{equation*}
\mathcal{J}^{d,\omega}_{m}(r):=\frac{J_{m+\frac{d}{2}-1}(\omega r )}{r^{\frac{d}{2}-1}},\quad\text{and }\quad\mathcal{Y}^{d,\omega}_{m}(r):= \frac{Y_{m+\frac{d}{2}-1}(\omega r )}{r^{\frac{d}{2}-1}},
\end{equation*}
where $J_\nu$ and $Y_\nu$ are the Bessel functions of first and second type, respectively. 
\begin{lemma}\label{lemma:helping is good}
For odd $d\geq 3$, odd $p>1-d$, and $m\in\N$, any eigenfunction of $T_m^{d,p}$ with eigenvalue $\lambda\neq 0$ is a linear combination of $\left\{\mathcal{J}^{d,\omega_{\ell}}_{m}: \ell=1,\ldots,\frac{d+p}{2}\right\}$, where
\begin{equation}
\label{eq:general solutions Dn2lin}
  \omega_{\ell} =   |v|^{\frac{1}{d+p}}
  \begin{cases}
    {\rm e}^{\pi{\rm i} \frac{2\ell}{d+p}}, & (-1)^{\frac{d+p}{2}}v > 0,\\
    {\rm e}^{\pi{\rm i}\frac{2\ell+1}{d+p}}, & (-1)^{\frac{d+p}{2}}v < 0,    
  \end{cases}
%\mathcal{J}^{d,\omega_l}_{m},\qquad \ell=0,\ldots,\frac{d+p}{2}-1,
\end{equation}
and
\[
   v=-\lambda^{-1} 2^{d+p-2}(d+2m-2)\left(2-\tfrac{d}{2}\right)_{\frac{d+p}{2}-1} \left(\tfrac{d+p}{2}-1\right)!.
\]
% where $\omega$
%\begin{equation*}
%\zeta_\ell\lambda^{\frac{1}{d+p}}
%\end{equation*}
%for suitable $\lambda\neq 0$. 
\end{lemma}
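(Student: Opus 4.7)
The plan is to transform the integral eigenvalue equation $T_m^{d,p}\varphi=\lambda\varphi$ into a linear ODE of order $d+p$ by establishing that $\mathcal{K}^{d,p}_m$ is, up to an explicit normalization, a Green's function for $D_m^{N}$ on $[0,1]$ with $N:=\tfrac{d+p}{2}\in\mathbb{N}$. The crucial input is that, since $d$ is odd, $\|x-y\|^{p}$ is (up to a nonzero constant) a fundamental solution of the polyharmonic operator $\Delta^{N}$ on $\mathbb{R}^{d}$ whenever $p=2N-d$. Applying $\Delta_{x}^{N}$ to the Gegenbauer expansion \eqref{eq:exp of Kn} and using the identity $\Delta(f(r)Y_{k}^{m}(\hat x))=(D_{m}f)(r)Y_{k}^{m}(\hat x)$ together with the orthogonality underlying the addition formula \eqref{eq:C into Y}, I would deduce the distributional identity
\begin{equation*}
D_{m}^{N}\mathcal{K}^{d,p}_{m}(r,s)\;=\;c_{0}\,s^{1-d}\,\delta(r-s),
\end{equation*}
for a specific constant $c_{0}=c_{0}(d,p,m)$.

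Once this Green's function identity is in hand, applying $D_{m}^{N}$ in $s$ to $T_{m}^{d,p}\varphi=\lambda\varphi$ yields $D_{m}^{N}\varphi=(c_{0}/\lambda)\varphi$ on $(0,1)$. Setting $\mu:=c_{0}/\lambda$ and factoring $D_{m}^{N}-\mu=\prod_{k=0}^{N-1}(D_{m}+\omega_{k}^{2})$, where the $N$ distinct numbers $\omega_{k}^{2}$ satisfy $(-\omega_{k}^{2})^{N}=\mu$, each second-order factor contributes the two independent solutions $\mathcal{J}_{m}^{d,\omega_{k}}$ and $\mathcal{Y}_{m}^{d,\omega_{k}}$. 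The overall $(d+p)$-dimensional solution space is then cut in half by the regularity requirement at $0$: because $\varphi=\lambda^{-1}T_{m}^{d,p}\varphi\in\mathcal{C}([0,1])$, while $\mathcal{Y}_{m}^{d,\omega_{k}}(r)\sim r^{-(m+d-2)}$ as $r\to 0$, the $\mathcal{Y}$-contributions must vanish, leaving precisely the $N=\tfrac{d+p}{2}$ independent eigenfunctions $\mathcal{J}_{m}^{d,\omega_{\ell}}$.

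The last step is to match the explicit roots in \eqref{eq:general solutions Dn2lin}. The condition $(-\omega_{\ell}^{2})^{N}=c_{0}/\lambda$ rewrites as $\omega_{\ell}^{d+p}=(-1)^{N}c_{0}/\lambda$, whose modulus fixes $|\omega_{\ell}|=|v|^{1/(d+p)}$ with $v:=(-1)^{N}c_{0}/\lambda$, and whose argument selects $(d+p)$-th roots of a positive or negative real number according to the sign of $(-1)^{N}v$, i.e.\ of $v$ when $N$ is even and of $-v$ when $N$ is odd. Picking the $N$ roots that give distinct values of $\omega^{2}$ (pairing $\omega$ with $-\omega$) produces the two branches indexed by $\zeta_{d+p}^{\ell}$ or $\zeta_{2(d+p)}^{2\ell+1}$ exactly as in \eqref{eq:general solutions Dn2lin}, and the claimed formula for $v$ is then nothing but the value of $-c_{0}/\lambda$ after using $(-1)^{N}(2-\tfrac{d}{2})_{N-1}$-type sign bookkeeping.

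The main obstacle is the explicit computation of $c_{0}$, equivalently the jump of $\partial_{s}^{\,2N-1}\mathcal{K}^{d,p}_{m}(r,s)$ across $s=r$ extracted from the piecewise $\min/\max$ representation in \eqref{eq:kernel K}. Because $d$ and $p$ are both odd, the upper parameter $1-\tfrac{d+p}{2}$ of the $\,_2F_1$ is a non-positive integer, so the hypergeometric series terminates and $\mathcal{K}^{d,p}_{m}$ is an explicit polynomial expression in $\min(r,s)/\max(r,s)$; the jump reduces to a finite sum of Pochhammer ratios that, after applying the Legendre duplication formula to the $\Gamma$-factors, collapses to $2^{d+p-2}(d+2m-2)(2-\tfrac{d}{2})_{N-1}(N-1)!$, yielding the stated $v$.
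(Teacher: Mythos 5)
Your strategy matches the paper's: realize $\mathcal{K}^{d,p}_m$ as the Green's function for $D_m^N$ with $N=\tfrac{d+p}{2}$ (the paper works through $K^{d,p}$ as the Green's function of $\Delta^N$ on $\mathbb{B}^d$, citing Kal'menov--Suragan), transfer the eigenvalue problem to $D_m^N\varphi=v\varphi$, and span the $2N$-dimensional ODE solution space by $\mathcal{J}^{d,\omega_\ell}_m$ and $\mathcal{Y}^{d,\omega_\ell}_m$; the root bookkeeping and your formula for $v$ are consistent with the paper modulo the sign ambiguity you acknowledge. The gap is in the step that eliminates the $\mathcal{Y}$-contributions. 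Continuity of $\varphi$ at $r=0$ is not sufficient for $N\ge 2$: the negative powers of $r$ in a combination $\sum_\ell b_\ell\,\mathcal{Y}^{d,\omega_\ell}_m$ impose only $\lceil(m+d-2)/2\rceil$ Vandermonde-type conditions on the $N$ coefficients $b_\ell$, which is strictly less than $N$ once $m\le p$. For instance, with $d=3$, $m=0$, $p=1$, one has $\mathcal{Y}^{3,\omega}_0(r)\propto \cos(\omega r)/r$, and $b_1\cos(\omega_1 r)/r + b_2\cos(\omega_2 r)/r$ is bounded --- in fact analytic --- near $r=0$ whenever $b_1\omega_1^{-1/2}+b_2\omega_2^{-1/2}=0$, so your continuity argument leaves a nontrivial $\mathcal{Y}$-subspace alive.

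The paper closes this with a parity argument rather than a boundedness argument: eigenfunctions of $\Delta^N$ are real analytic on $\mathbb{B}^d$ (Aronszajn--Creese--Lipkin, John), so the radial factor $\varphi$ must equal $r^m$ times an even analytic function of $r$ (Baouendi--Goulaouic--Lipkin). Because $d$ is odd, $m+\tfrac{d}{2}-1$ is a half-integer and $Y_\nu=\pm J_{-\nu}$, so the expansion of $\mathcal{Y}^{d,\omega}_m(r)$ at $0$ contains only the powers $r^{2j-(m+d-2)}$, all of parity \emph{opposite} to $m$. It is this parity mismatch, and not mere continuity, that forces the $\mathcal{Y}$-coefficients to vanish. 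To repair your proof you would need to import this analyticity/parity input (or extract the full set of $N$ conditions at $r=0$ directly from the structure of the integral equation); as written, the reduction from $2N$ to $N$ is unjustified for small~$m$.
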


\begin{proof}[Proof of Lemma~\ref{lemma:helping is good}]
Up to a constant depending on $d$ and $p$, $K^{d,p}$ is the Green's function of the polyharmonic equation $\Delta^{\frac{d+p}{2}}u=f$ on $\mathbb{B}^d$ with certain nonlocal boundary conditions, cf.~\cite{Kalmenov:2012fq}. In particular and by specifying the constant, one deduces that any eigenfunction of 
\begin{equation}\label{eq:def of Tmxx}
T^{d,p} : L_2(\mathbb{B}^d) \rightarrow \mathcal{C}(\mathbb{B}^d),\quad f\mapsto\int_{\mathbb{B}^d} K^{d,p}(\cdot,y)f(y){\rm d} y
\end{equation}
with eigenvalue $\tilde \lambda\neq 0$ is an eigenfunction of $\Delta^{\frac{d+p}{2}}$ with eigenvalue 
\begin{equation*}
- \tilde \lambda^{-1} 2^{d+p-2}\left(2-\tfrac{d}{2}\right)_{\frac{d+p}{2}-1} \left(\tfrac{d+p}{2}-1\right)! (d-2)\vol(\S^{d-1}).
\end{equation*}

The Laplacian in polar coordinates is $\Delta = \partial^2_r + \frac{d-1}{r}\partial_r + \frac{1}{r^2}\Delta_{\mathbb S^{d-1}}$. The decomposition \eqref{eq:spec total} (see also \eqref{eq:spec_KM}) 
%$\phi^{d,p}_{m,j,l}= \varphi^{d,p}_{m,j}Y^m_l$ 
yields $\Delta^{\frac{d+p}{2}}\varphi^{d,p}_{m,j,l}=\big(D_{m}^{\frac{d+p}{2}} \varphi^{d,p}_{m,j}\big)Y^m_l$ on $\mathring{\mathbb{B}}^d$, where $D_m$ is as in \eqref{eq:DM}. Since $\tilde \lambda = \frac{(d-2)\mathrm{vol}(\mathbb S^{d-1})}{2m+d-2} \lambda$, any eigenfunction of $T^{d,p}_m$ with eigenvalue $\lambda\neq 0$ is an eigenfunction of $D^{\frac{d+p}{2}}_m$ with eigenvalue 
\begin{equation*}
v=-\lambda^{-1} 2^{d+p-2}(d+2m-2)\left(2-\tfrac{d}{2}\right)_{\frac{d+p}{2}-1} \left(\tfrac{d+p}{2}-1\right)!.
\end{equation*}
The linearly independent eigenfunctions of $D_m^{\frac{d+p}{2}}$ with respect to any eigenvalue $v \ne 0$ are 
\begin{equation}\label{eq:general solutions Dn}
\mathcal{J}^{d,\omega_{\ell}}_{m},\quad\text{and }\quad \mathcal{Y}^{d,\omega_{\ell}}_{m},\qquad \ell=1,\dots,\tfrac{d+p}{2},
\end{equation}
where
\[
  (-\omega_\ell^{2})^{\frac{d+p}{2}} = (-1)^{\frac{d+p}{2}} \omega_{\ell}^{d+p} = v, \qquad \ell=1,\dots,\tfrac{d+p}{2},
\]
and we take
\[
  \omega_{\ell} := |v|^{\frac{1}{d+p}}
  \begin{cases}
    {\rm e}^{2\pi{\rm i} \frac{\ell}{d+p}}, & (-1)^{\frac{d+p}{2}}v > 0\\
    {\rm e}^{\pi{\rm i}\frac{2\ell+1}{d+p}}, & (-1)^{\frac{d+p}{2}}v < 0,    
  \end{cases}
  \qquad \ell=1,\dots,\frac{d+p}{2}.
\]
As an eigenfunction of a positive integer power of the Laplacian, $\phi^{d,p}_{m,j,l}$ is real analytic on $\mathbb{B}^d$, cf.~\cite{Aronszajn:1983mb,John:1950ek}. Hence, the radial part $\varphi^{d,p}_{m,j,l}$ must be an analytic function on $[0,1]$ with even or odd parity for $m$ even or odd, respectively, cf.~\cite{Baouendi:1974pr}. 
%However, linear combinations, in which any $\mathcal{Y}^{d,\zeta_\ell\lambda^{\frac{1}{d+p}}}_{m}$ occurs with nonzero coefficient, is neither even nor odd. 
The functions $\mathcal{Y}^{d,\omega_{\ell}}_{m}$ do not have matching parity, which concludes the proof. 
\end{proof}

In order to identify the eigenvalues and the linear combination in Lemma~\ref{lemma:helping is good}, we check how $T_m^{d,p}$ in \eqref{eq:def of Tm} acts on $\mathcal{J}^{d,\omega}_{m}$.
\begin{lemma}\label{lemma:T of I}
For $d$ and $p>1-d$ odd, we have
\begin{multline}\label{eq:T I etc}
(T_m^{d,p} \mathcal{J}^{d,\omega}_{m})(r) =  \omega^{-(d+p)} \,2^{d+p-2}(d+2m-2)\left(-\tfrac p2\right)_{\frac{d+p}{2}-1}\left(\tfrac{d+p}{2}-1\right)! \mathcal{J}^{d,\omega}_{m}(r) \\
-   \frac{\left(-\frac p2\right)_m}{\left(\frac d2 -1\right)_m} \sum_{i=1}^{\frac{d+p}{2}}  2^{i-1}\left(\tfrac{d+p}{2}-i+1\right)_{i-1}  \omega^{-i} \mathcal{J}^{d,\omega}_{m-i}(1) r^m \pFq{2}{1}{i-\tfrac{d+p}{2}, m-\tfrac p2}{\tfrac d2 + m}{r^2}.
\end{multline}
\end{lemma}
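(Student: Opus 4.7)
The plan is to exploit that $\mathcal{K}_m^{d,p}(\cdot,s)$ is, up to a multiplicative constant, the Green's function on $[0,1]$ for the radial differential operator $D_m^{(d+p)/2}$ subject to the nonlocal boundary conditions inherited from $K^{d,p}$ (compare the argument preceding Lemma~\ref{lemma:helping is good} and \cite{Kalmenov:2012fq}). Since $D_m \mathcal{J}_m^{d,\omega} = -\omega^2 \mathcal{J}_m^{d,\omega}$, we expect the leading term $\omega^{-(d+p)}\mathcal{J}_m^{d,\omega}(r)$ to come from the bulk Green's-function identity, while the polynomial correction encodes the failure of $\mathcal{J}_m^{d,\omega}$ to satisfy the boundary conditions at $r=1$ that distinguish $\mathcal{K}_m^{d,p}$ among fundamental solutions.

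Concretely, I would integrate by parts $(d+p)/2$ times, using that $D_m$ is symmetric with respect to the weight $r^{d-1}\,\mathrm{d}r$:
\[
\int_0^1 \mathcal{K}_m^{d,p}(r,s)\, D_m^{j} \mathcal{J}_m^{d,\omega}(r)\, r^{d-1}\mathrm{d}r = \int_0^1 D_m^{j}\bigl(r\mapsto \mathcal{K}_m^{d,p}(r,s)\bigr)\mathcal{J}_m^{d,\omega}(r)\, r^{d-1}\mathrm{d}r + (\text{boundary at } r=1).
\]
The $r=0$ endpoint contributes nothing thanks to the $r^{d-1}$ weight and the regularity of $\mathcal{J}_m^{d,\omega}$, while the jump of derivatives of $\mathcal{K}_m^{d,p}$ across $r=s$ (the Green's-function delta) produces the desired leading term, with constant matching $2^{d+p-2}(d+2m-2)(-p/2)_{(d+p)/2-1}((d+p)/2-1)!$. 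The boundary data at $r=1$ yield the correction, with successive derivatives of $\mathcal{J}_m^{d,\omega}$ expressed via the Bessel recurrences $\frac{d}{dz}[z^\nu J_\nu(z)] = z^\nu J_{\nu-1}(z)$ and $\frac{d}{dz}[z^{-\nu}J_\nu(z)] = -z^{-\nu}J_{\nu+1}(z)$ as linear combinations of $\mathcal{J}_{m-i}^{d,\omega}(1)$ for $i=1,\dots,(d+p)/2$.

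As an alternative, purely computational route one may use that, under the hypotheses on $d$ and $p$, the parameter $1-(d+p)/2$ is a non-positive integer, so the hypergeometric series in the definition \eqref{eq:kernel K} of $\mathcal{K}_m^{d,p}$ terminates after $(d+p)/2$ summands. Splitting the integral at $r=s$ and expanding, one obtains a finite double sum of integrals of the form $\int_0^s r^{\mu} J_{m+d/2-1}(\omega r)\,\mathrm{d}r$ and $\int_s^1 r^{\mu} J_{m+d/2-1}(\omega r)\,\mathrm{d}r$ with $\mu-(m+d/2-1)$ an odd positive integer; each such integral closes in terms of Bessel values at $\omega s$ and $\omega$ by the same Bessel recurrences. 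The contributions evaluated at $\omega s$ must collapse into the claimed multiple of $\mathcal{J}_m^{d,\omega}(s)$, while the contributions evaluated at $\omega$, multiplied by the surviving powers of $s$, must reassemble into $s^m\,{}_2F_1(i-(d+p)/2,m-p/2;d/2+m;s^2)$.

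I expect the main obstacle in either approach to be the algebraic reorganization of the correction term. One must rearrange a double sum into the single $_2F_1$ indexed by $i$, which will likely rely on a Chu--Vandermonde or Pfaff--Saalsch\"utz evaluation similar in spirit to the hypergeometric manipulations already employed in the proof of Theorem~\ref{thm:Askey}; matching the precise constants $2^{i-1}((d+p)/2-i+1)_{i-1}\omega^{-i}$ for each $i$ will require meticulous bookkeeping of the factorials and signs arising from iterated Bessel recurrences and from the $(-p/2)_m/(d/2-1)_m$ prefactor.
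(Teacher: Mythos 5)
Your \emph{alternative} ``purely computational route'' is in fact the approach the paper takes. The paper exploits exactly the termination of the ${}_2F_1$ in \eqref{eq:kernel K} (since $1-\tfrac{d+p}{2}$ is a nonpositive integer), writes $\mathcal{K}_m^{d,p}(r,s)$ as a finite sum of monomials, splits the integral at the diagonal, and closes the two pieces $\int_0^r J_\alpha(\omega s)s^{\alpha+2k+1}\,\mathrm{d}s$ and $\int_r^1 J_\alpha(\omega s)s^{-\alpha+2k+1}\,\mathrm{d}s$ in Bessel functions via its identities \eqref{eq:F1} and \eqref{eq:F2}. The reassembly into a multiple of $\mathcal{J}_m^{d,\omega}(r)$ plus a ${}_2F_1$ correction then hinges on the Bessel identity \eqref{eq:E}, whose verification uses Gauss's theorem and a terminating very-well-poised ${}_5F_4$ summation --- so the hypergeometric workhorse is slightly different from the Chu--Vandermonde or Pfaff--Saalsch\"utz evaluations you anticipated, though your expectation that the bookkeeping of signs, Pochhammer symbols, and powers of $2$ is the crux was well placed.

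Your primary route --- integrating by parts against $D_m^{(d+p)/2}$ and reading off the leading term from the Green's-function jump and the correction from the boundary data at $r=1$ --- is a genuinely different framing that the paper does \emph{not} employ for this lemma; it reserves the Green's-function argument for Lemma~\ref{lemma:helping is good}, where only the classification of eigenfunctions (not precise constants) is needed. Your route is conceptually cleaner in isolating the $\omega^{-(d+p)}\mathcal{J}_m^{d,\omega}(r)$ term, but to actually carry it out you would need to (i) make precise the distributional identity $D_m^{(d+p)/2}\,\mathcal{K}_m^{d,p}(\cdot,s)=c\,s^{1-d}\delta_s$ on $(0,1)$ and pin down $c$, and (ii) extract the local boundary data at $r=1$ of the radial kernel $\mathcal{K}_m^{d,p}$ after separating variables in the nonlocal conditions from \cite{Kalmenov:2012fq} on $\mathbb B^d$; the paper sidesteps both by computing from the closed form of $\mathcal{K}_m^{d,p}$. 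Neither proposal contains a flaw, but be aware that the boundary bookkeeping in your first route is not simpler than the Bessel-identity manipulations in the paper's.
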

\begin{proof}[Proof of Lemma~\ref{lemma:T of I}]
The idea of the proof is to use the series expansion of the Bessel functions, apply the integral operator to each term, and eventually recover the right-hand side of \eqref{eq:T I etc}. We shall provide the skeleton of the proof and omit some lengthy computations.

Let $d \ge 1$, $p \ge 1-d$ and $r\ge s$. If $d+p$ is even, direct computations yield
\begin{equation*}
  \mathcal{K}_m^{d,p}(r,s)  = \frac{(-\frac p2)_{m}}{(\frac d2 - 1)_{m}}
   \sum_{k=0}^{\frac{d+p}{2}-1} ( -1)^k  {\binom {\frac{d+p}{2}-1} k} \frac{(-\frac p2+m)_{k}}{(\frac d2 + m)_{k}} s^{m+2k} r^{p-m-2k}.
\end{equation*}
We obtain that $( T_m^{d,2l-d} \mathcal{J}^{d,\omega}_{m})(r) $ equals
 \begin{align*}
& \frac{(\frac d2 -l)_m}{(\frac d2 -1)_m}
  \!\sum_{k=0}^{l-1} \! \frac{{\binom {l-1} k}(\frac d2-l+m)_{k}}{( -1)^k(\frac d2 + m)_{k}}  
   \! \Bigg[ \! \frac{\int_0^r J_{m+\frac{d}{2}-1}(\omega s ) s^{\frac{d}{2}+m+2k} \mathrm d s}{r^{2l-d-m-2k}}  \!+ \! \frac{\int_r^1 J_{m+\frac{d}{2}-1}(\omega s ) s^{2l-\frac{d}{2}-m-2k}\mathrm ds}{r^{m+2k}} \!\Bigg]\!.
  \end{align*}
    For $\alpha > 0$, $k \in \mathbb N_0$, $\omega \in \mathbb C$ and $r > 0$, 
integration of each term of the power series of the Bessel function eventually yields
\begin{align}
 \int_0^r J_{\alpha}(\omega s) s^{\alpha + 2k +1} \mathrm d s & =  \sum_{i=0}^{k} \frac{2^i (-k)_{i}}{ \omega^{i+1}} \frac{J_{\alpha + i+1}(\omega r)}{r^{i-\alpha -2k-1}},\label{eq:F1}\\
%{\color{red} \int_0^r J_{\alpha}(\omega s) s^{-\alpha + 2k +1} \mathrm d s }&=   \frac{k!\left(\frac \omega 2 \right)^{\alpha-2k-2}}{2\Gamma(\alpha-k)} \,- \,r^{-\alpha+2k+2}\sum_{i=0}^{k} \frac{(-2)^{i} (-k)_{i}}{(\omega r)^{i+1}} J_{\alpha-i-1}(\omega r) .
\int_r^1 J_{\alpha}(\omega s) s^{-\alpha + 2k +1} \mathrm d s &=  \sum_{i=0}^{k} \frac{(-2)^{i} (-k)_{i}}{\omega^{i+1}} \Big(\frac{J_{\alpha-i-1}(\omega r)}{r^{i+\alpha-2k-1}} -J_{\alpha-i-1}(\omega )\Big),
\label{eq:F2}
 \end{align}
  which follows from direct computations and 
   \begin{equation*}
\sum_{j=0}^{\infty} \frac{(-k)_{n+j-1}}{(\alpha+n)_{j+1}} =   \frac{(-k)_{n-1}}{\alpha+n}\pFq{2}{1}{n-k-1,1}{\alpha+n+1}{1}=
\frac{(-k)_{n-1}}{\alpha+k+1},
  \end{equation*}
for $\alpha \in \mathbb R$, $k \in \mathbb N_{0}$, and $n \in \mathbb N$ with $\alpha \not\in \{-(k+1),-k,\dots,-n \}$. 
By applying \eqref{eq:F1} and \eqref{eq:F2}, 
we can express $( T_m^{d,2l-d} \mathcal{J}^{d,\omega}_{m})(r)$ as a sum of Bessel functions of various different orders. %Next, we aim to isolate the orders $m+\frac{d}{2}-1$. 
Straightforward but lengthy computations combined with the identity
\begin{equation}\label{eq:E}
  J_{\alpha}(x) = \sum_{i,k=0}^{l} \frac{(\frac{x}{2})^{2l+1-i}(-1)^{k}(-k)_{i} }{(\alpha-l)_{l+1}(l-k)! k!} \left[  \frac{(\alpha-l)_{k}J_{\alpha+i+1}(x)}{(\alpha+1)_{k}}   
%  \right.\\
%\left.\hspace{3cm} +(-1)^{l+i}
%       \frac{(\alpha-l)_{l-k}}{(\alpha+1)_{l-k}}  J_{\alpha-i-1}(x)\right]
+
       \frac{(\alpha-l)_{l-k} J_{\alpha-i-1}(x)}{(-1)^{l+i}(\alpha+1)_{l-k}} \right]
\end{equation} 
eventually lead to the claimed equality \eqref{eq:T I etc}. 

In order to verify \eqref{eq:E},
according to the definition of the Bessel function $J_\alpha(x)$,
we have to show that the coefficient
of $(x/2)^{\alpha+2m}$ on the right-hand side of \eqref{eq:E} equals
$(-1)^m/(m!\Gamma(m+\alpha+1))$, for $m=0,1,\dots$. Let first $m\ge l$.
Then this coefficient $R(\alpha,m)$ equals
\begin{align}
\label{eq:Besselexpr}
R(\alpha,m)& :=
\sum_{i,k=0}^{l} \frac{(-1)^{k+m-l-1}(-k)_{i}
}{(\alpha+k-l)_{l+1}(l-k)! k!(m-l-1)!\Gamma(m-l+\alpha+i+1)} \\
\notag
&\kern1cm
+
\sum_{i,k=0}^{l} \frac{(-1)^{k+m}
(-k)_i}{(\alpha-k)_{l+1}(l-k)! k!(m-l+i)!\Gamma(m-l+\alpha)} 
\\
\notag
&=
\sum_{k=0}^{l} \frac{(-1)^{k+m-l-1}\pFq{2}{1}{1,-k}{m-l+\alpha+1}{1}
}{(\alpha+k-l)_{l+1}(l-k)! k!(m-l-1)!\Gamma(m-l+\alpha+1)} 
\\
&\kern1cm
+
\sum_{k=0}^{l} \frac{(-1)^{k+m}\pFq{2}{1}{1,-k}{m-l+1}{1}
}{(\alpha-k)_{l+1}(l-k)! k!(m-l)!\Gamma(m-l+\alpha)} .
\notag
\intertext{
Both hypergeometric series can be evaluated by means of Gauss'
Theorem. Thus, we obtain}
%\begin{multline*}
R(\alpha,m)&=\sum_{k=0}^{l} \frac{(-1)^{k+m+l+1}
}{(\alpha+k-l)_{l+1}(l-k)! k!(m-l-1)!\Gamma(m-l+\alpha)(m-l+k+\alpha)} \notag\\
&\kern1cm
+
\sum_{k=0}^{l} \frac{(-1)^{k+m}
}{(\alpha-k)_{l+1}(l-k)! k!(m-l-1)!\Gamma(m-l+\alpha)(m-l+k)}. \notag
%\end{multline*}
\end{align}
Now we reverse the order of summation in the second sum, i.e.,
we replace $k$ by $l-k$ there. Then both sums can be conveniently put
together to yield
\begin{align*}
R(\alpha,m)&=
\sum_{k=0}^{l} 
\frac{(-1)^{k+l+m}(\alpha-l+2k)
}{(\alpha-l+k)_{l+1}(l-k)!
  k!(m-l-1)!\Gamma(m-l+\alpha)(m-l+k+\alpha)(m-k)}\\
  &=\frac{(-1)^{l+m} \pFq{5}{4}{\alpha-l,\frac {\alpha} {2}-\frac {l}
  {2}+1,\alpha-l+m,-m,-l}{\frac {\alpha} {2}-\frac {l}
  {2},1-m,\alpha-l+m+1,\alpha+1}{1}.
}{(\alpha-l+1)_{l}l!
  (m-l-1)!\Gamma(m-l+\alpha)(m-l+\alpha)(m)}
\end{align*}
The hypergeometric function can be evaluated by means of the classical
terminating very-well-poised $_5F_4$-summation (cf.\ 
\cite[Equation~(2.3.4.6); Appendix~(III.13)]{SlatAC}). 
After some simplification one arrives at the desired expression
$(-1)^m/(m!\Gamma(m+\alpha+1))$.

If $m<l$, then the first sum in \eqref{eq:Besselexpr} does not
contribute anything because of the term $(m-l-1)!$ in the denominator.
In the second sum, the summation over~$i$ may be started at $i=l-m$,
which can be evaluated by means of the binomial theorem. The result
is zero except if $k=l$. Again, in the end one obtains 
$(-1)^m/(m!\Gamma(m+\alpha+1))$.
\end{proof}

\begin{proof}[Proof of Theorem~\ref{thm:ball ultimate}] \label{proof:ball ultimate}
We now combine Lemmata~\ref{lemma:T of I} and~\ref{lemma:helping is good}. Let the linear combination
$ %\begin{equation*}
f=\sum_{\ell=1}^{\frac{d+p}{2}} c_\ell \mathcal{J}^{d,\omega_{\ell}}_{m}
$ %\end{equation*}
be an eigenfunction of $T^{d,p}_m$ with eigenvalue $\lambda$ and let $\omega_{\ell}$ be as in \eqref{eq:general solutions Dn2lin}. We obtain for any $\ell = 1,\dots,\frac{d+p}{2}$ that
\[
  \begin{aligned}
    \omega_{\ell}^{d+p} & = (-1)^{\frac{d+p}{2}} v \\
    & = \lambda^{-1} 2^{d+p-2}(d+2m-2)(-1)^{\frac{d+p}{2}-1}(2-\tfrac{d}{2})_{\frac{d+p}{2}-1} (\tfrac{d+p}{2}-1)! \\
    & = \lambda^{-1} 2^{d+p-2}(d+2m-2)(-\tfrac{p}{2})_{\frac{d+p}{2}-1} (\tfrac{d+p}{2}-1)!,
  \end{aligned}
\]
and thus
\begin{equation}\label{eq:Tmeig}
T^{d,p}_m f - \lambda f =  \frac{(-\frac{p}{2})_m}{(\frac d2 -1)_m} F(r)\!^{\top} A(\omega) \,c,
\end{equation}
where $A(\omega)$ is as in \eqref{eq:det formula eigenvalue}, $c=(c_{\ell})_{\ell=1,\dots,\frac{d+p}{2}}$, and 
\begin{equation*}
      F(r)  := \left( 2^{i-1}(\tfrac{d+p}{2}-i+1)_{i-1} |\omega|^{-i} r^{m} \pFq{2}{1}{i-\tfrac{d+p}{2},m-\tfrac{p}{2}}{ \tfrac d2 + m}{r^2}  \right)_{i=1}^{\frac{d+p}{2}}.
\end{equation*}
For $i=1,\dots,\tfrac{d+p}{2}$, the hypergeometric functions in $F$ are polynomials of exact degree $d+p-2i$ and thus linearly independent. Hence, for $c \ne 0$, the right-hand side of \eqref{eq:Tmeig}
  vanishes if and only if $A(\omega)$ is singular and $c$ is in its nullspace. %This concludes the proof.
\end{proof}

%\begin{proof}[Proof of Corollary \ref{cor:example}]
%The formulas are derived from Theorem \ref{thm:ball ultimate}. Since $J_{m-\frac{1}{2}}({\rm i}\omega)\neq 0$, for all $\omega\in\R\setminus\{0\}$, the eigenspaces are $1$-dimensional and \eqref{eq:minor formula2} is not the zero-function.  
%
%%To determine the eigenfunctions of the kernel $C-\|x-y\|_{2}^{p}$, we must replace $T^{d,p}_0$ with 
%%\begin{equation*}
%%\tilde{T}_{0}^{d,p} f := C \int_{0}^{1} f(s)s^{d-1}\mathrm ds - T_{0}^{d,p}f.
%%\end{equation*}
%%The relation $\int_{0}^{1} J_{\frac{d-2}{2}}(\omega s) s^{d-1}\mathrm ds = \omega^{-1} J_{\frac{d}{2}}(\omega)$ eventually leads to 
%%\begin{align*}
%%&  (\tilde{T}_0^{d,p} \mathcal{J}^{d,\omega}_m)(r) =\ldots
%%%  = - \omega^{-(d+p)}2^{d+p-2} (\tfrac{d+p}{2}-1)! (d-2)\big(-\tfrac{p}{2}\big)_{\frac{d+p}{2}-1} \mathcal{J}^{d,\omega}_m(r) \\
%%%   &  +\sum_{i=1}^{l} \;_2F_1\big(i-l, \tfrac d2-l; \tfrac d2; r^2\big)  2^{i-1}(l-i+1)_{i-1}   \Big( \omega^{-i} J_{\frac{d-2}{2}-i}(\omega) + \delta_{i,l} \frac{C}{ 2^{l-1}(l-1)!} \omega^{-1} J_{\frac d2}(\omega) \Big),\\
%%\end{align*}
%%The eigenvalues are then determined as in Theorem~\ref{thm:ball ultimate} by replacing the last line of $A(\omega)$ appropriately.
%%
%%
%%
%\end{proof}
%

\section{Proofs for Section \ref{sec:SO3}}
\label{sec:A6}
\begin{proof}[Proof of Proposition~\ref{prop:SO}]\label{proof:SO3}
We shall derive the coefficients $a_m(p,\SO(3))$ from the family of spherical coefficients $a_m(p,\S^1)$. The half-angle identity $\sin(\frac{t}{2})=\sqrt{\frac{1-\cos(t)}{2}}$, for $t\in[0,\pi]$, implies
\begin{equation}\label{eq:a1}
2^{-\frac{p}{2}}\|x-y\|^p = \sqrt{1-\langle x,y\rangle}^p = 2^{\frac{p}{2}}\sin(\tfrac{s}{2})^p,\qquad x,y\in\S^{d-1},
\end{equation}
where $s=\arccos(\langle x,y\rangle)$. For $d=2$, the addition theorem yields
\begin{equation}\label{eq:a2}
Y^m_1(x)\overline{Y^m_l(y)}+Y^m_2(x)\overline{Y^m_2(y)}=2\mathcal{T}_m(\langle x,y\rangle),\qquad x,y\in\S^1,\quad m=1,2,\ldots,
\end{equation}
where $\mathcal{T}_m$ are the Chebyshev polynomials of the first kind, i.e., $\mathcal{T}_m\big(\cos(s)\big)=\cos(ms)$, for $s\in[0,\pi]$. The relation \eqref{eq:expans sphere uni} for $d=2$ with \eqref{eq:a1} and \eqref{eq:a2} leads to 
\begin{equation}\label{eq:sphere basic oo}
  2^{\frac{p}{2}} \sin(\tfrac{s}{2})^p = a_0(p,\S^1) + 2 \sum_{m=1}^\infty a_m(p,\S^1) \mathcal{T}_m\big(\cos(s)\big),\qquad s\in[0,\pi],\quad p>0.
\end{equation} 

We now switch to $\SO(3)$. For $x,y\in\SO(3)$, the relation 
$$\|x-y\|_{\F}=2^{\frac{1}{2}}\sqrt{3-\trace(x^\top y)},$$ 
the choice $s=\arccos(\frac{\trace(x^\top y)-1}{2})$, and \eqref{eq:sphere basic oo} imply
\begin{align*}
2^{-p} \|x-y\|_{\F}^p  = 2^{-\frac{p}{2}} \sqrt{3-\trace(x^\top y)}^p &= 2^{\frac{p}{2}} \sin(\tfrac{s}{2})^p \\
&  = a_0(p,\S^1) + 2 \sum_{m=1}^\infty a_m(p,\S^1) \mathcal{T}_m\big(\cos(s)\big)\\
& = a_0(p,\S^1) + 2\sum_{m=1}^\infty a_m(p,\S^1) \mathcal{T}_{2m}\big(\cos(\tfrac{s}{2})\big).
\end{align*}
The identity $\mathcal{T}_m = \frac{1}{2}(\mathcal{C}^1_{m}-\mathcal{C}^1_{m-2})$ with $\mathcal{T}_0 = \mathcal{C}^1_0$ and $\mathcal{C}^0_{-1} = 0$, for $m \in \mathbb N$, leads to
\begin{equation}\label{eq:aasdw}
2^{-p} \|x-y\|_{\F}^p  =\sum_{m=0}^\infty (a_{m}(p,\S^1) - a_{m+1}(p,\S^1)) \mathcal{C}^1_{2m}\big(\cos(\tfrac{s}{2})\big).
\end{equation}
By calculating the differences $2^{\frac{p}{2}}(a_{m}(p,\S^1) - a_{m+1}(p,\S^1))$ in \eqref{eq:aasdw} and applying the addition theorem of the Wigner 
$\mathcal{D}$-functions,
\begin{equation*}
  \sum_{k,l=-m}^m \mathcal{D}_{k,l}^m(x)\overline{\mathcal{D}_{k,l}^m(y)} =
  (2m+1)\mathcal{C}^1_{2m}\big(\cos(\tfrac{s}{2})\big),
\end{equation*}
we derive \eqref{eq:into SO}. Analytic continuation and well-known relations for the gamma function cover the remaining values of $p > -3$.

Standard calculations yield $\frac{\Gamma(m-\frac{p}{2})}{\Gamma(m+\frac{p}{2}+2)}=m^{-2-p}(1+o(1))$, which concludes the proof.
\end{proof}

\section{Proofs for Section \ref{sec:d}}
\label{sec:A7}
\subsection{Proofs for Section \ref{sec:G24 a}}
\label{sec:A7.1}
\begin{proof}[Proof of Theorem~\ref{thm:G24}]\label{proof:G1}
Proof of \eqref{eq:a alpha}: According to \cite{Davis:1999dn}, $Q_\lambda$ is explicitly given in terms of Legendre polynomials by 
\begin{equation}\label{eq:K lambda}
Q_\lambda(x,y) =   \frac{c_\lambda}{2} \left(\mathcal{C}^{\frac{1}{2}}_{\lambda_1+\lambda_2}(\xi_{+})\mathcal{C}^{\frac{1}{2}}_{\lambda_1-\lambda_2}(\xi_{-}) + \mathcal{C}^{\frac{1}{2}}_{\lambda_1+\lambda_2}(\xi_{-})\mathcal{C}^{\frac{1}{2}}_{\lambda_1-\lambda_2}(\xi_{+})\right),
\end{equation}
where $c_\lambda:=\dim(\mathcal{H}^4_{2\lambda})=\left(2-\delta_{0,\lambda_2}\right)\left((2\lambda_1+1)^2-4\lambda^2_2\right)$ and $\theta_1$, $\theta_2$ denote the principal angles between $x$ and $y$ and 
\begin{equation}\label{eq:transf}
\xi_{+} = \cos(\theta_1+\theta_2), \quad \xi_{-} = \cos(\theta_1-\theta_2).
\end{equation}
In order to write the integral \eqref{eq:integ to be} in terms of the variables $\xi_\pm$, we first observe
\begin{align*}
  2^{-\frac{p}{2}}\|x-y\|^p_{\F}  = \left(2- \trace(xy)\right)^{\frac{p}{2}} 
  & = \left(2-\cos(\theta_1)^2-\cos(\theta_2)^2\right)^{\frac{p}{2}}\\
  & = \left(1-\tfrac{1}{2}\cos(2\theta_1)-\tfrac{1}{2}\cos(2\theta_2)\right)^{\frac{p}{2}}\\
  & = \left(1-\xi_{+}\xi_{-}\right)^{\frac{p}{2}}.
\end{align*}
We set $-q:=p/2$ as well as $m:=\lambda_1+\lambda_2$ and $n:=\lambda_1-\lambda_2$. According to \cite{Davis:1999dn}, the measure $\mu_{\G_{2,4}}\otimes \mu_{\G_{2,4}}$ in \eqref{eq:integ to be} turns into $d\xi_{+}d\xi_{-}$ for the variables $\xi_\pm$ on $|\xi_{+}| \le \xi_{-} \le 1$, so that we obtain
\begin{align*}
a_\lambda(p,\G_{2,4})  &= \int_{0}^{1} \int_{-\xi_{-}}^{\xi_{-}} (1-\xi_{+}\xi_{-})^{-q}
  \frac{1}{2}\left(\mathcal{C}^{\frac{1}{2}}_n(\xi_{+})\mathcal{C}^{\frac{1}{2}}_m(\xi_{-})+\mathcal{C}^{\frac{1}{2}}_n(\xi_{-})C^{\frac{1}{2}}_m(\xi_{+})\right) d\xi_{+}d\xi_{-}.
  \intertext{Symmetry arguments yield}
a_\lambda(p,\G_{2,4})   
  &=  \frac14 \int_{-1}^{1} \int_{-1}^{1} (1-xy)^{-q}
  \frac{1}{2}\left(\mathcal{C}^{\frac{1}{2}}_n(x)\mathcal{C}^{\frac{1}{2}}_m(y)+\mathcal{C}^{\frac{1}{2}}_n(y)\mathcal{C}^{\frac{1}{2}}_m(x)\right) {\rm d}x {\rm d} y\\
  &=  \frac14 \int_{-1}^{1} \int_{-1}^{1} (1-xy)^{-q}
  \mathcal{C}^{\frac{1}{2}}_n(x)\mathcal{C}^{\frac{1}{2}}_m(y) {\rm d}x {\rm d}y.
    \intertext{The series expansion of $(1-xy)^{-q}$ converges absolutely for $q<2$, and the Legendre polynomials $\mathcal{C}^{\frac{1}{2}}_m$ are orthogonal to the monomials $x^k$ for $k<m$ or $m \not\equiv k \!\!\!\mod 2$. Therefore, the orthogonality relations yield} 
 a_\lambda(p,\G_{2,4})   &= \frac14  \sum_{k=0}^{\infty} \frac{(q)_k}{k!} \int_{-1}^1 \mathcal{C}^{\frac{1}{2}}_m(x)x^k dx \int_{-1}^1  \mathcal{C}^{\frac{1}{2}}_n(y)y^k dy\\
   &= \frac14 \sum_{k=0}^{\infty} \frac{(q)_{m+2k}}{(m+2k)!} \int_{-1}^1 \mathcal{C}^{\frac{1}{2}}_m(x)x^{2k+m} dx \int_{-1}^1 \mathcal{C}^{\frac{1}{2}}_n(y)y^{2k+m} dy\\
   &=\frac14 \sum_{k=0}^{\infty} \frac{(q)_{m+2k}}{(m+2k)!} c_k^m c_{\tfrac{m-n}{2}+k}^n,\qquad\qquad\text{where }   c_k^m := \int_{-1}^1 x^{m+2k} \mathcal{C}^{\frac{1}{2}}_m(x) dx.
\end{align*}
The use of the generating function and further calculations lead to 
\begin{equation}\label{eq:11}
  c_k^m  
  = \frac{m!\,(m+1)_{2k}}{2^{m-1}(\tfrac32)_m\,4^k\, k!\, (m+\tfrac32)_k}.
\end{equation}
Application of \eqref{eq:11} and $(q)_{m+2k}=(q)_m(m+q)_{2k}$ yields
\begin{multline*}
a_\lambda(p,\G_{2,4})  
  =  \frac14 (q)_m\sum_{k=0}^{\infty} \frac{(m+q)_{2k}}{(m+2k)!}
  \cdot\frac{m!\,(m+1)_{2k}}{2^{m-1}\,(\tfrac32)_m\, 4^k\, k!\, (m+\tfrac32)_k}\\
  \cdot\frac{n!\,(n+1)_{m-n+2k}}{2^{n-1}(\tfrac32)_n\, 4^{\frac{m-n}{2}+k}\, (\tfrac{m-n}{2}+k)!\, (n+\tfrac32)_{\tfrac{m-n}{2}+k}}.
\end{multline*}
By reordering and making use of $ n!\,(n+1)_{m-n+2k} = (m+2k)!$, which
cancels the identical term in the denominator, 
we are led to
\begin{align*}
a_\lambda(p,\G_{2,4})  
 & =   \frac{(q)_m\, m!}{4^{m}\,(\tfrac32)_m}\sum_{k=0}^{\infty}\frac{1}{ (m+\tfrac32)_k}  \cdot \frac{(m+1)_{2k}}{4^k}\cdot \frac{(m+q)_{2k}}{4^k}\\
 &\hspace{5cm}  \cdot \frac{1}{(\tfrac{m-n}{2}+k)!}
  \cdot\frac{1}{ (\tfrac32)_n\,(n+\tfrac32)_{\tfrac{m-n}{2}+k}}
  \cdot\frac{1}{k!}\\
  & =   \frac{(q)_m\, m!}{4^{m}\,(\tfrac32)_m\,(\tfrac{m-n}{2})!\,(\tfrac32)_n\,(n+\tfrac32)_{\tfrac{m-n}{2}}}\sum_{k=0}^{\infty}\frac{ 
   (\tfrac{m+1}{2})_{k}\,(\tfrac{m+2}{2})_{k}\,(\tfrac{m+q}{2})_{k}\,(\tfrac{m+q+1}{2})_{k} }{ (m+\tfrac32)_k\,(\tfrac{m-n+2}{2})_k\,(\tfrac{m+n+3}{2})_k} 
  \cdot\frac{1}{k!},
\end{align*}
where the equality in the last line is due to the identities
\begin{align*}
   \frac{(m+1)_{2k}}{4^k}&=(\tfrac{m+1}{2})_{k}\,(\tfrac{m+2}{2})_{k}, &
 \frac{(m+q)_{2k}}{ 4^{k}}& = (\tfrac{m+q}{2})_{k}\,(\tfrac{m+q+1}{2})_{k}  ,\\
   (\tfrac{m-n}{2}+k)!&=(\tfrac{m-n}{2})!\,(\tfrac{m-n+2}{2})_k,&
   (n+\tfrac32)_{\tfrac{m-n}{2}+k}&=(\tfrac{m+n+3}{2})_k\,(n+\tfrac32)_{\tfrac{m-n}{2}}.
\end{align*}
The relation $(\tfrac32)_n (n+\tfrac32)_{\tfrac{m-n}{2}}=(\tfrac32)_{\tfrac{m+n}{2}}$ and 
our choices $m=|\lambda|$, $n=\lambda_1-\lambda_2$  yield $(\tfrac{m-n}{2})!(\tfrac32)_n (n+\tfrac32)_{\tfrac{m-n}{2}}=(\tfrac32)_{\lambda_1}\lambda_2!=(\tfrac32)_\lambda$, so that $q=-p/2$ and the definition of 
the ${}_4F_3$ hypergeometric series conclude the proof of \eqref{eq:a alpha}.

\medskip
Proof of \eqref{eq:dec-G24}: \label{proof:G2}
The proof of the decay property for $a_\la(p,\G_{2,4})$ requires
some preparation and auxiliary results. For $p\not\in 2\N$, direct calculations yield
\begin{equation}
\label{eq:AA}
a_\la(p,\G_{2,4})
=\frac{2^{-\frac{p}{2}-3}}{\Ga\big(-\frac {p} {2}\big)
}
\sum_{k=0}^\infty
\frac {
\Gamma\big(\frac {|\la|} {2}+\frac {1}{2}+k\big)\,
\Gamma\big(\frac {|\la|} {2}+1+k\big)\,
\Gamma\big(\frac {|\la|} {2}+\frac {1} {2}-\frac {p} {4}+k\big)\,
\Gamma\big(\frac {|\la|} {2}-\frac {p} {4}+k\big)
}
{\Gamma\big(|\la|+\frac {3} {2}+k\big)\,
\Gamma\big(\la_1+\frac {3} {2}+k\big)\,
\Gamma\big(\la_2+1+k\big)\,
\Gamma\big(k+1\big)}.
\end{equation}
In the following, we treat the case 
$\la_1=an$ and $\la_2= (1-a)n$
for $n\in\N$ and $n\to\infty$, with a fixed $a\in[\frac{1}{2},1]$, so that $|\la|=n$. Summarizing the proof of \eqref{eq:dec-G24}, we shall first verify that the summand in \eqref{eq:AA},
\begin{equation} \label{eq:AB}
S(n,k):=\frac {
\Gamma\big(\frac {n} {2}+\frac {1} {2}+k\big)\,
\Gamma\big(\frac {n} {2}+1+k\big)\,
\Gamma\big(\frac {n} {2}+\frac {1} {2}-\frac {p} {4}+k\big)\,
\Gamma\big(\frac {n} {2}-\frac {p} {4}+k\big)
}
{
\Gamma\big(n+\frac {3} {2}+k\big)\,
\Gamma\big(an+\frac {3} {2}+k\big)\,
\Gamma\big((1-a)n+1+k\big)\,
\Gamma\big(k+1\big)},
\end{equation}
as a sequence in $k$, is unimodal, i.e., it first increases until it has reached its maximum and then 
decreases, see Lemma~\ref{lem:2}. Second, approximation of $S(n,k)$ for $k\ge\ep  n^2$ with the help of Stirling's formula, where $\ep>0$
but fixed, leads to an asymptotic formula for 
$\sum_{k\ge\ep n^2}S(n,k)$; see Lemma~\ref{lemma:1}. 
Third, we let $\ep\to0$ to obtain the asymptotic behavior of the full sum
$\sum_{k\ge0}S(n,k)$ for $n\to\infty$; see \eqref{eq:AL}.
If the result is substituted in \eqref{eq:AA}, the claimed decay in
\eqref{eq:dec-G24} follows immediately upon observing $\|\la\|^2=(2a^2-2a+1)n^2$.

To start with, we may consider $S(n,k)$ as a function of real~$k$. 
\begin{lemma} \label{lem:1}
Given $n\in\N$, let $k_0=k_0(n)\in (0,\infty)$ be such that $\frac {\partial} {\partial
k}S(n,k_0)=0$. Then 
\begin{equation} \label{eq:AD}
k_0=k_0(n)=\frac {2a^2-2a+1}{p+6}n^2+O(n),
\quad 
\text{for }n\to\infty.
\end{equation}
\end{lemma}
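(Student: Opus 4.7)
The plan is to work with $k$ as a continuous variable and locate the critical point by solving $\partial_k\log S(n,k_0)=0$. Using $\partial_k\log\Gamma(x+k)=\psi(x+k)$, where $\psi:=\Gamma'/\Gamma$ is the digamma function, this reduces to
\[
\sum_{i=1}^{4}\psi(\alpha_i^{\mathrm{num}}+k_0) \;=\; \sum_{j=1}^{4}\psi(\alpha_j^{\mathrm{den}}+k_0),
\]
with the eight shifts read off from the gamma arguments in \eqref{eq:AB}; notably, four of them (on the denominator side) grow linearly in $n$. I would then substitute the ansatz $k_0=cn^2+dn+O(1)$ and expand each digamma using
\[
\psi(cn^2+Bn+C) = \log(cn^2)+\frac{B}{cn}+\frac{1}{cn^2}\Big(C-\frac{B^2}{2c}-\frac{1}{2}\Big)+O(n^{-3}).
\]

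The decisive structural fact is that the numerator and denominator sides cancel identically at every lower order in $1/n$: at $O(1)$ both contribute $4\log(cn^2)$, and at $O(1/n)$ the sums of the linear-in-$n$ coefficients of the shifts both equal $4d+2$, independent of $d$. (The $-\tfrac{1}{2cn^2}$ pieces also match four-to-four.) This is precisely what forces the quadratic scale $k_0\sim cn^2$, and it means that $c$ is pinned down only by the $O(1/n^2)$ balance. A direct computation from the explicit shifts in \eqref{eq:AB} gives
\[
\sum_{i=1}^{4} C_i^{\mathrm{num}}-\sum_{j=1}^{4} C_j^{\mathrm{den}}=-3-\tfrac{p}{2},\qquad
\sum_{i=1}^{4}(B_i^{\mathrm{num}})^2-\sum_{j=1}^{4}(B_j^{\mathrm{den}})^2=-(2a^2-2a+1),
\]
and setting the $O(1/n^2)$ coefficient of the difference of the digamma sums to zero leads to
\[
-3-\tfrac{p}{2}+\frac{2a^2-2a+1}{2c}=0,
\]
which solves to $c=(2a^2-2a+1)/(p+6)$, matching \eqref{eq:AD}. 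Continuing the expansion one order further yields a linear equation for the coefficient $d$ and thus pins down the $O(n)$ correction claimed in \eqref{eq:AD}.

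The main obstacle is justifying that an interior critical point with this asymptotic profile actually exists, so that the formal expansion above is not vacuous. For this I would invoke the unimodality of $S(n,\cdot)$ from Lemma~\ref{lem:2} (extended to the continuous setting by real-analyticity in $k$), which provides a unique interior maximizer of $\log S(n,\cdot)$, together with a sign check of $\partial_k\log S(n,k)$ at probe scales $k=n^{2-\delta}$ and $k=n^{2+\delta}$ for small $\delta>0$: at the smaller probe the constant-shift contribution dominates and the derivative is positive, while at the larger probe the cancellation mechanism at quadratic order no longer helps and the derivative is negative. This confines $k_0$ to the scale $n^2$ and rigorously validates the asymptotic ansatz used to derive $c$.
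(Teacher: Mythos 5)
Your core calculation is sound and does reproduce the paper's constant: the list of eight gamma arguments from \eqref{eq:AB} with the ansatz $k_0=cn^2+dn+O(1)$ gives $\sum B^{\mathrm{num}}=\sum B^{\mathrm{den}}=4d+2$, $\sum C^{\mathrm{num}}-\sum C^{\mathrm{den}}=-3-\tfrac{p}{2}$, and $\sum (B^{\mathrm{num}})^2-\sum(B^{\mathrm{den}})^2=-(2a^2-2a+1)$, and the $O(1/n^2)$ balance then yields $c=(2a^2-2a+1)/(p+6)$. This is essentially the same asymptotic bookkeeping the paper performs after exponentiating the digamma identity into the rational function $Q(n,k)$ and expanding its numerator $P(n,k)$; the two routes are cosmetically different (digamma expansion directly vs.\ expansion of $\log Q$) but rest on the same cancellation structure.

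There is, however, a genuine logical gap in how you justify the ansatz. You propose to ``invoke the unimodality of $S(n,\cdot)$ from Lemma~\ref{lem:2}'' — but in the paper the proof of Lemma~\ref{lem:2} \emph{uses} Lemma~\ref{lem:1} (it cites $k_0\sim \frac{1}{p+6}(2a^2-2a+1)n^2$ from Lemma~\ref{lem:1} to expand the second logarithmic derivative near $k_0$). Invoking Lemma~\ref{lem:2} here is therefore circular. Note also that the lemma, as stated, is conditional (``let $k_0$ be such that...''), so existence is not what needs to be shown; what does need to be shown, and what the unimodality appeal cannot supply, is that \emph{every} zero of $\partial_k\log S(n,\cdot)$ lies at the quadratic scale. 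Your sign check at $k=n^{2\pm\delta}$ is the right instinct and can be made to work, but as written it only probes two scales; you must show the derivative keeps its sign on all of $(0,n^{2-\delta}]$ and on all of $[n^{2+\delta},\infty)$. The paper sidesteps the ansatz entirely: from $Q(n,k_0)=1+o(k_0^{-1})$ it deduces $P(n,k_0)=o(k_0^3)$, and then inspection of the leading monomials of $P(n,k)=\tfrac{p+6}{2}k^3-(a^2-a+\tfrac12)(k^2n^2+kn^3)-\tfrac{1}{16}n^4+\cdots$ forces $k_0\asymp n^2$ (any other scale leaves a non-cancelled leading term), after which the coefficient $c$ drops out and a bootstrap gives the $O(n)$ remainder. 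If you replace the appeal to Lemma~\ref{lem:2} with a complete sign analysis on the two unbounded ranges (or with the paper's leading-monomial argument for $P$), your proof becomes correct.
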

\begin{lemma} \label{lem:2}
If $n$ is large enough, $S(n,k)$ has a unique --- local
and global --- maximum for $k\in[0,\infty)$.
\end{lemma}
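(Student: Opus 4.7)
The plan is to reduce discrete unimodality to a sign-count for a single polynomial, via the consecutive ratio
\begin{equation*}
r(n,k)\ :=\ \frac{S(n,k+1)}{S(n,k)}\ =\ \frac{\prod_{i=1}^{4}(c_{i}+k)}{\prod_{j=1}^{4}(d_{j}+k)},
\end{equation*}
which follows at once from $\Gamma(x+1)=x\,\Gamma(x)$ applied to each of the eight Gamma factors in \eqref{eq:AB}, with
\begin{equation*}
(c_{i})_{i=1}^{4}=\big(\tfrac{n}{2}+\tfrac{1}{2},\,\tfrac{n}{2}+1,\,\tfrac{n}{2}+\tfrac{1}{2}-\tfrac{p}{4},\,\tfrac{n}{2}-\tfrac{p}{4}\big),\quad
(d_{j})_{j=1}^{4}=\big(n+\tfrac{3}{2},\,an+\tfrac{3}{2},\,(1-a)n+1,\,1\big).
\end{equation*}
Since $\prod_{j}(d_{j}+k)>0$ for $k\ge 0$, the sign of $r(n,k)-1$ agrees with that of
\begin{equation*}
P(k)\ :=\ \prod_{i=1}^{4}(c_{i}+k)-\prod_{j=1}^{4}(d_{j}+k)\ =\ \beta_{3}k^{3}+\beta_{2}k^{2}+\beta_{1}k+\beta_{0},
\end{equation*}
where $\beta_{\ell}=e_{4-\ell}(c)-e_{4-\ell}(d)$ and $e_{m}$ denotes the $m$-th elementary symmetric polynomial. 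Thus it suffices to show that $P$ has exactly one positive real root for $n$ large.

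The decisive computation is one of signs of coefficients. A direct evaluation gives $\beta_{3}=\sum_{i}c_{i}-\sum_{j}d_{j}=-3-\tfrac{p}{2}$, which is strictly negative on the full range $p>-6$. Expanding in powers of $n$ and retaining only the leading contributions (the dominant block coming from the four factors near $n/2$ in the $c$'s and the three factors near $n,\,an,\,(1-a)n$ in the $d$'s), a short bookkeeping yields
\begin{equation*}
\beta_{0}=\tfrac{n^{4}}{16}+O(n^{3}),\qquad
\beta_{1}=\tfrac{2a^{2}-2a+1}{2}\,n^{3}+O(n^{2}),\qquad
\beta_{2}=\tfrac{2a^{2}-2a+1}{2}\,n^{2}+O(n).
\end{equation*}
The universal lower bound $2a^{2}-2a+1=2(a-\tfrac{1}{2})^{2}+\tfrac{1}{2}\ge\tfrac{1}{2}$ guarantees positivity of $\beta_{1},\beta_{2}$ for large $n$, uniformly in $a\in[\tfrac{1}{2},1]$; positivity of $\beta_{0}$ is immediate. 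Hence the sign pattern of $(\beta_{3},\beta_{2},\beta_{1},\beta_{0})$ is $(-,+,+,+)$ for $n$ sufficiently large.

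Descartes' rule of signs then forces $P$ to possess exactly one positive real root $k_{\ast}=k_{\ast}(n)$. Combined with $P(0)=\beta_{0}>0$ and $P(k)\to-\infty$ as $k\to\infty$, this yields $P>0$ on $[0,k_{\ast})$ and $P<0$ on $(k_{\ast},\infty)$. Translating back through $r$, one has $r(n,k)>1$ for integers $k<k_{\ast}$ and $r(n,k)<1$ for integers $k>k_{\ast}$, so the sequence $(S(n,k))_{k\in\N}$ is strictly increasing up to $\lceil k_{\ast}\rceil$ and strictly decreasing thereafter, producing the asserted unique local and global maximum.

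The main technical obstacle is the elementary-symmetric-polynomial bookkeeping: the cancellations that expose the common factor $\tfrac{2a^{2}-2a+1}{2}$ driving both $\beta_{1}$ and $\beta_{2}$ are not transparent a priori and require careful tracking of subleading orders in $n$. As a reassuring consistency check, the very same factor controls the location $k_{0}\sim\frac{2a^{2}-2a+1}{p+6}n^{2}$ of the critical point identified in Lemma~\ref{lem:1}, which matches $k_{\ast}$ to leading order in $n$, so that the two lemmas fit together as expected.
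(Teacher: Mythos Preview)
Your proof is correct and takes a genuinely different, more elementary route than the paper's.

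The paper treats $S(n,k)$ as a function of the \emph{real} variable~$k$ and argues via the digamma and trigamma functions: it shows $\partial_k\log S(n,0)>0$, then proves that $\partial_k^2\log S(n,k_0+s)<0$ for $|s|=o(n^2)$ using asymptotics of $\psi^{(1)}$, and finally concludes uniqueness because Lemma~\ref{lem:1} forces any two critical points to lie within $o(n^2)$ of each other. Your argument sidesteps all of this by observing that the consecutive ratio $r(n,k)=S(n,k+1)/S(n,k)$ is an explicit rational function, reducing unimodality to the sign pattern of a single cubic and invoking Descartes' rule. This is cleaner and requires no asymptotics for special functions beyond the elementary coefficient expansions. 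It is worth noting that the paper's $Q(n,k)$ (defined just after~\eqref{eq:mit Q}) is \emph{exactly} your ratio $r(n,k)$, although the paper arrives at it by exponentiating the leading log-approximation of $\psi$; consequently the paper's polynomial $P(n,k)$ in \eqref{eq:def of P}--\eqref{eq:again boot strap} is precisely $-P(k)$ in your notation, and the leading terms recorded there confirm your coefficients $\beta_0,\beta_1,\beta_2$.

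One small caveat: the lemma as stated refers to $k\in[0,\infty)$, and the paper's proof indeed establishes a unique critical point of the real-variable function. Your argument, via the discrete ratio, yields unimodality of the integer-indexed sequence $(S(n,k))_{k\in\N}$. This is what is actually used downstream (the bound $\sum_{k<\ep n^2}S(n,k)\le \ep n^2\,S(n,\ep n^2)$ only needs monotonicity along integers), so there is no gap in the application; but if you want the literal real-variable statement you would need a further word, e.g.\ that $\partial_k\log S(n,k)=\sum_i\psi(c_i+k)-\sum_j\psi(d_j+k)$ has at most one zero, which follows by the same Descartes-type reasoning applied to the rational function $\sum_i\frac{1}{c_i+k}-\sum_j\frac{1}{d_j+k}$ (the approximation $\psi'(x)\approx 1/x$) or simply by noting that the discrete result already suffices.
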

\begin{lemma} \label{lemma:1}
Let $\ep>0$ be fixed and let $A:=\frac{2a^2-2a+1}{p+6}$. Then
\begin{equation*}
\sum_{k\ge \ep n^2}S(n,k)=A^{-\frac{p+4}{2}}
n^{-(p+4)}
\big(1+O(\tfrac {1} {n})\big)
\int_{-1+\frac {\ep} {A}}^\infty
(1+x)^{-\frac{p+6}{2}}
\exp\big(-\tfrac {p+6} {2(1+x)}\big){\rm d}x,
\quad \text{for }n\to\infty.
\end{equation*}
\end{lemma}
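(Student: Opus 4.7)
The plan is to apply Stirling's formula to every Gamma factor of $S(n,k)$ in the regime $k\ge\ep n^{2}$, and then compare the resulting sum to the claimed integral by a routine Euler--Maclaurin step.

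First I would establish the uniform asymptotic
\begin{equation*}
S(n,k)=k^{-(p+6)/2}\exp\!\Big(-\tfrac{Bn^{2}}{2k}\Big)\,\big(1+O(\tfrac{1}{n})\big),\qquad B:=2a^{2}-2a+1,
\end{equation*}
valid uniformly for $k\ge\ep n^{2}$. This is obtained from
\begin{equation*}
\log\Gamma(c+k)=(c+k-\tfrac12)\log k - k + \tfrac{c^{2}-c}{2k} + \tfrac12\log(2\pi) + O(\tfrac{1}{n}),
\end{equation*}
applied with $c=O(n)$ and $k\ge\ep n^{2}$ to each of the four numerator and four denominator Gamma factors of $S(n,k)$. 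The decisive (and easy to overlook) point is that $\Gamma(k+1)$ contributes the value $c=1$ to the denominator tally, not $c=0$. Once this is noted, the two symmetric-function identities
\begin{equation*}
\sum_{\mathrm{num}}c_{i}-\sum_{\mathrm{den}}c_{j}=-\tfrac{p+6}{2},\qquad
\sum_{\mathrm{num}}(c_{i}^{2}-c_{i})-\sum_{\mathrm{den}}(c_{j}^{2}-c_{j})=-Bn^{2}+O(n),
\end{equation*}
produce respectively the $k^{-(p+6)/2}$ prefactor and the exponential $\exp(-Bn^{2}/(2k))$. The remaining Stirling corrections, of sizes $c^{3}/k^{2}=O(1/n)$ and $1/k=O(1/n^{2})$, are absorbed into the uniform $O(1/n)$ remainder.

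Next I would substitute $k=An^{2}(1+x)$ with $A=B/(p+6)$. The identity $B/A=p+6$ turns $\tfrac{Bn^{2}}{2k}$ into $\tfrac{p+6}{2(1+x)}$, while $k^{-(p+6)/2}=A^{-(p+6)/2}n^{-(p+6)}(1+x)^{-(p+6)/2}$; together with the Jacobian $dk=An^{2}\,dx$ this gives
\begin{equation*}
\int_{\ep n^{2}}^{\infty}\!k^{-(p+6)/2}e^{-Bn^{2}/(2k)}\,dk
=A^{-(p+4)/2}n^{-(p+4)}\!\!\int_{-1+\ep/A}^{\infty}\!(1+x)^{-(p+6)/2}e^{-(p+6)/(2(1+x))}dx.
\end{equation*}
Finally, Euler--Maclaurin converts the sum to the integral: the boundary contribution $\tfrac12 S(n,\lceil\ep n^{2}\rceil)$ and the first-derivative remainder are both of order $n^{-(p+6)}$, negligible against the main term of order $n^{-(p+4)}$ and easily absorbed into the claimed $(1+O(1/n))$ factor.

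The main obstacle is ensuring the Stirling remainder is genuinely uniform in $k$ across the unbounded range $k\ge\ep n^{2}$. Each error term has the form $O(c^{j}/k^{j-1})$ with $c=O(n)$, and is therefore monotonically non-increasing in $k$ once $k\ge\ep n^{2}$, so uniformity on $[\ep n^{2},\infty)$ follows from the bound at the lower endpoint, where the error is already $O(1/n)$. The tail of the sum/integral converges because $S(n,k)\lesssim k^{-(p+6)/2}$ is integrable thanks to $p+6>2$, and this closes the argument.
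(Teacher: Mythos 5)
Your proposal is correct and follows essentially the same strategy as the paper: apply Stirling's formula to each of the eight Gamma factors uniformly on $k\ge\ep n^{2}$, extract the main asymptotic, and convert the sum to the claimed Riemann integral. The bookkeeping is slightly cleaner than the paper's: you expand directly in $k$ and use the two elementary-symmetric identities
$\sum c_i-\sum d_j=-\tfrac{p+6}{2}$ and $\sum(c_i^2-c_i)-\sum(d_j^2-d_j)=-Bn^2+O(n)$ (with $B=2a^2-2a+1$), whereas the paper first proves Lemmata~\ref{lem:1}--\ref{lem:2} to locate the unique maximizer $k_0=An^2+O(n)$ and then expands in $s=k-k_0$. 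Your version avoids the explicit detour through $k_0$; both arrive at the same asymptotic $S(n,k)\sim k^{-\frac{p+6}{2}}\exp(-Bn^2/(2k))$ and the same substitution $k=An^2(1+x)$.

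One small spot is left implicit in your last step. You assert that the Euler--Maclaurin first-derivative remainder is of order $n^{-(p+6)}$, but the uniform asymptotic $S(n,k)=g(k)(1+O(1/n))$ for the smooth envelope $g$ does not by itself control $\partial_k S$ or the total variation of $S$ over $[\ep n^2,\infty)$. Either invoke the unimodality of $k\mapsto S(n,k)$ from the paper's Lemma~\ref{lem:2}, so that $\int_{\ep n^2}^\infty|\partial_k S|\,\mathrm dk\le 2\max_k S(n,k)=O(n^{-(p+6)})$, or repeat the Stirling/digamma analysis on $\partial_k\log S(n,k)$ to obtain $\partial_k\log S(n,k)=O(1/k)$ uniformly, which gives $\int|\partial_k S|\lesssim (\ep n^2)^{-1}\int S=O(n^{-(p+6)})$. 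With that one-line justification added, your argument is complete and matches the paper's result.
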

We postpone the proofs of Lemmata~\ref{lem:1}, \ref{lem:2}, and \ref{lemma:1}, and discuss their consequences first. 
If we let $\ep\to0$ and substitute $t=\frac {p+6} {2(1+x)}$ in the above integral, then the
integral definition of the gamma function yields 
\begin{equation}\label{eq:lower bound}
\int_{-1}^\infty
\left(1+x\right)^{-\frac{p+6}{2}}
\exp\Big(
-\frac {p+6} {2\left(1+x\right)}\Big){\rm d}x
=\Big(\frac {p+6} {2}\Big)^{-\frac{p+6}{2}}\,
\Ga\Big(\frac {p+4} {2}\Big).
\end{equation}
Lemma~\ref{lemma:1} and \eqref{eq:lower bound} provide the asymptotic
lower bound on $\sum_{k\ge 0}S(n,k)$ of the following two-sided claim:
\begin{equation} \label{eq:AL}
\sum_{k\ge 0}S(n,k)=
\Big(\frac {2} {2a^2-2a+1}\Big)^{\frac{p+4}{2}}
\Ga\Big(\frac {p+4} {2}\Big)n^{-p-4}
\left(1+o(1)\right),
\quad \text{for }n\to\infty.
\end{equation}
To verify the asymptotic upper bound on $\sum_{k\ge 0}S(n,k)$ in \eqref{eq:AL}, we observe \begin{equation*}
\sum_{k\ge0}S(n,k)\leq  \sum_{k\ge\ep n^2}S(n,k)
+\ep n^2 S(n,\ep n^2) ,\quad \ep<A,
\end{equation*}
which is due to Lemma~\ref{lem:2}, saying that $S(n,k)$ grows until its maximum at $k=k_0\sim An^2$. 
By Lemma~\ref{lemma:1} and letting $\ep\to0$, we obtain the upper bound in \eqref{eq:AL}. By taking into account the additional factor $2^{-\frac{p}{2}-3}/\Ga\big(-\frac {p} {2}\big)$ in \eqref{eq:AA} and the relation $\|\la\|^2=(2a^2-2a+1)n^2$, we observe that \eqref{eq:AL} provides our claim \eqref{eq:dec-G24}. 

To complete the proof of \eqref{eq:dec-G24} in Theorem~\ref{thm:G24}, it remains to prove Lemmata~\ref{lem:1}, \ref{lem:2}, and \ref{lemma:1}.
\begin{proof}[Proof of Lemma~\ref{lem:1}]
The condition $0=\frac {\partial} {\partial
k}S(n,k_0)$ implies $0=\frac {\partial} {\partial k}\big(\log S(n,k_0)\big)$. By using the digamma function $\psi(z)$, the logarithmic derivative of \eqref{eq:AB}
can be written as
{\small 
\begin{multline*}
 \frac {\partial} {\partial k}\big(\log S(n,k)\big) = \psi\Big(\frac {n} {2}+\frac {1} {2}+k\Big)
+\psi\Big(\frac {n} {2}+1+k\Big)
+\psi\Big(\frac {n} {2}+\frac {1} {2}-\frac {p} {4}+k\Big)
+\psi\Big(\frac {n} {2}-\frac {p} {4}+k\Big)\\
 -\psi\Big(n+\frac {3} {2}+k\Big)
-\psi\Big(an+\frac {3} {2}+k\Big)
-\psi\big((1-a)n+1+k\big)
-\psi(k+1).
\end{multline*}}%
For $k=o(n)$, the above expression is certainly positive for large~$n$,
hence nonzero. If the order of magnitude of $k$ is at least the one of~$n$
(in symbols, $n=O(k)$), then we may estimate the logarithmic derivative by 
{\small
\begin{multline}
 \frac {\partial} {\partial k}\big(\log S(n,k)\big) =
\log\big(\frac {n} {2}+\frac {1} {2}+k\big)
+\log\big(\frac {n} {2}+1+k\big)
+\log\big(\frac {n} {2}+\frac {1} {2}-\frac {p} {4}+k\big)
+\log\big(\frac {n} {2}-\frac {p} {4}+k\big)\\
 -\log\big(n+\frac {3} {2}+k\big)
-\log\big(an+\frac {3} {2}+k\big)
-\log\big((1-a)n+1+k\big)
-\log\big(k+1\big)
 +O\big(n^{-1}\big),\label{eq:ACa}%\nonumber
\end{multline}
}%
\noindent 
where the asymptotics $
\psi(x)=\log x-\frac {1} {2x}+O\big(x^{-2}\big)$, for $x\to\infty$, 
cf.~\cite[Equation~1.18(7)]{ErdeAA}), were used. 
%
%
%
%
%\begin{small}
%\begin{equation}\label{eq:AC}
%\begin{split}
% \frac {\partial} {\partial k}\big(\log S(n,k)\big) &= \psi\Big(\frac {n} {2}+\frac {1} {2}+k\Big)
%+\psi\Big(\frac {n} {2}+1+k\Big)
%+\psi\Big(\frac {n} {2}+\frac {1} {2}-\frac {p} {4}+k\Big)
%+\psi\Big(\frac {n} {2}-\frac {p} {4}+k\Big)\\
%& \; -\psi\Big(n+\frac {3} {2}+k\Big)
%-\psi\Big(an+\frac {3} {2}+k\Big)
%-\psi\big((1-a)n+1+k\big)
%-\psi(k+1),
%\end{split}
%\end{equation}
%\end{small}
%where $\psi(z)$ is the digamma function. The asymptotics $
%\psi(x)=\log x-\frac {1} {2x}+O\big(x^{-2}\big)$, for $x\to\infty$, 
%cf.~\cite[Eq.~1.18(7)]{ErdeAA}), applied to \eqref{eq:AC} yields
%\begin{small}
%\begin{equation}\label{eq:ACa}
%\begin{split}
%\frac {\partial} {\partial k}\big(\log S(n,k)\big)  =\log\big(\frac {n} {2}+\frac {1} {2}+k\big)
%+\log\big(\frac {n} {2}+1+k\big)
%+\log\big(\frac {n} {2}+\frac {1} {2}-\frac {p} {4}+k\big)
%+\log\big(\frac {n} {2}-\frac {p} {4}+k\big)\\
%-\log\big(n+\frac {3} {2}+k\big)
%-\log\big(an+\frac {3} {2}+k\big)
%-\log\big((1-a)n+1+k\big)
%-\log\big(k+1\big)\\+O\big(n^{-1}\big).
%\end{split}
%\end{equation}
%\end{small}
By applying the exponential function 
on both sides of $\frac {\partial} {\partial k}\big(\log S(n,k_0)\big)=0$, 
together with the above estimation for $\frac {\partial} {\partial k}\big(\log S(n,k)\big)$ we obtain 
\begin{equation}\label{eq:mit Q}
Q(n,k_0)=1+O(n^{-1}),
\end{equation}
where 
\begin{equation*}
Q(n,k):=\frac {
\big(\frac {n} {2}+\frac {1} {2}+k\big)
\big(\frac {n} {2}+1+k\big)
\big(\frac {n} {2}+\frac {1} {2}-\frac {p} {4}+k\big)
\big(\frac {n} {2}-\frac {p} {4}+k\big)}
{
\big(n+\frac {3} {2}+k\big)
\big(an+\frac {3} {2}+k\big)
\big((1-a)n+1+k\big)
(k+1)}.
\end{equation*}
For $k\sim cn$, we observe 
\begin{equation*}
\lim_{n\to\infty}Q(n,k)=\frac {\big(c+\frac {1} {2}\big)^4} 
{(c+1)(c+a)(c+1-a)c}
>1,
\end{equation*}
so that we may assume that $k_0$ is of larger asymptotic order of magnitude
than~$n$. 

Define $P(n,k)$ by 
\begin{equation}\label{eq:def of P}
Q(n,k)=1-\frac {P(n,k)} 
{
\big(n+\frac {3} {2}+k\big)
\big(an+\frac {3} {2}+k\big)
\big((1-a)n+1+k\big)
(k+1)}.
\end{equation}
For $k$ of larger asymptotic order of magnitude than~$n$, the asymptotics of the digamma function implies that the error term $O(n^{-1})$ in \eqref{eq:ACa} and \eqref{eq:mit Q} may be replaced by $O(k^{-2})$ and hence by $o(k^{-1})$. The relations \eqref{eq:mit Q} and \eqref{eq:def of P} lead to $P(n,k_0)=o(k_0^3)$. Since direct computations yield
\begin{equation}\label{eq:again boot strap}
%\begin{split}
P(n,k) =\Big(\frac{p+6}{2}\Big) k^3
- \Big(a^2 -a
   +\frac{1}{2}\Big)(k^2n^2+k n^3) -\frac{1}{16} n^4 + \text{ lower order terms},
 %  \end{split}
\end{equation}
asymptotically leading terms in $P(n,k)$ must cancel each other. We have already excluded $k_0\sim cn$, so that we now consider $k_0\sim cn^2$ for an appropriate constant~$c$. The terms $k^3$ and $k^2n^2$ in $P(n,k)$ must cancel each other, so that 
$$
\frac {1} {2}(p+6)c^3-\frac {1} {2}(2a^2-2a+1)c^2=0.
$$
The solution $c=\frac {1} {p+6}(2a^2-2a+1)$ yields the leading term in \eqref{eq:AD}. In order to derive the $O(n)$ term in \eqref{eq:AD}, we have to perform ``boot\-strap'', i.e., we substitute $k_0=cn^2+k_1$ in \eqref{eq:again boot strap} and  apply analogous arguments to eventually conclude 
$k_1=O(n)$.
\end{proof}

\begin{proof}[Proof of Lemma~\ref{lem:2}]
We already saw in the previous proof that $\frac {\partial} {\partial
k}\log S(n,0)>0$, for sufficiently large $n$. Hence, $\frac {\partial} {\partial
k} S(n,0)>0$ holds, so that $S(n,k)$ does not have a local maximum in $k=0$. Convergence of the series \eqref{eq:AA} implies $S(n,k)\rightarrow 0$ for integers $k\rightarrow\infty$. Thus, for sufficiently large $n$, $S(n,k)$ attains a local maximum at some $k_0\in(0,\infty)$.  
Lemma~\ref{lem:1} implies $k_0\sim \frac {1} {p+6}(2a^2-2a+1)n^2$. In order to investigate $S(n,k)$ in a neighborhood of $k_0$, we compute $\frac {\partial^2} {\partial
s^2}\log S(n,k_0+s)$, which is 
\begin{equation}
\begin{split}
\psi^{(1)}\big(\frac {n} {2}+\frac {1} {2}+k\big)
+\psi^{(1)}\big(\frac {n} {2}+1+k\big)
+\psi^{(1)}\big(\frac {n} {2}+\frac {1} {2}-\frac {p} {4}+k\big)
+\psi^{(1)}\big(\frac {n} {2}-\frac {p} {4}+k\big)\\
-\psi^{(1)}\big(n+\frac {3} {2}+k\big)
-\psi^{(1)}\big(an+\frac {3} {2}+k\big)
-\psi^{(1)}\big((1-a)n+1+k\big)
-\psi^{(1)}\big(k+1\big),
\end{split}\label{eq:AF}
\end{equation}
where $k=k_0+s$ and $\psi^{(1)}(x)$ denotes the derivative of $\psi(x)$. We claim that, for $\vert s\vert=o( n^2)$ and sufficiently
large~$n$, we have $\frac {\partial^2} {\partial
s^2}\log S(n,k_0+s)<0$. In order to establish this claim, we make use of $\psi^{(1)}(x)=\frac {1}
{x}+\frac {1} {2x^2}+O(x^{-3})$, for $x\to\infty$, cf.~\cite[Equations~1.16(9) and 1.18(9)]{ErdeAA}), to estimate the individual expressions in \eqref{eq:AF}. 
Using $k=k_0+s=k_0+o(n^2)= k_0+o( k_0)$, we obtain
\begin{align*}
\psi^{(1)}\Big(\frac {n} {2}+\frac {1} {2}+k\Big)
&=\frac {1} {\frac {n} {2}+\frac {1} {2}+k}
+\frac {1} {2\left(\frac {n} {2}+\frac {1}
{2}+k\right)^2}+O\big(k_0^{-3}\big)\\
&=\frac {1} {k\big(1+\frac {\frac {n} {2}+\frac {1} {2}}
k\big)}
+\frac {1} {2 k_0^2}+o\big(k_0^{-2}\big)\\
&=\frac {1} {k}\Big(1-\frac {\frac {n} {2}+\frac {1} {2}} {k}
+\frac {n^2} {4k^2}\Big)
+\frac {1} {2 k_0^2}+o\big(k_0^{-2}\big),
\end{align*}
where we have used $\frac{1}{1+x}=1-x+x^2+O(x^3)$, for $x\rightarrow 0$. 
We treat the other terms in \eqref{eq:AF} analogously. Note that $k=k_0+o(n^2)$ also implies
$ 
k= \frac {1} {p+6}(2a^2-2a+1)n^2+o(n^2)$, 
so that, by putting the individual estimates together, we arrive at
\begin{align*}
\frac {\partial^2} {\partial s^2}\log S(n,k_0+s)&=
\frac {\frac {p} {2}+3} {k^2}
+\frac {n^2} {k^3}\left(-a^2-(1-a)^2\right)+o\big(
k_0^{-2}\big)\\
&=-\frac {\frac {p} {2}+3} {k^2}+o\big(k_0^{-2}\big)
=-\frac {\frac {p} {2}+3} {k_0^2}+o\big(k_0^{-2}\big).
\end{align*}
For sufficiently large $n$ (and hence large~$k_0$), this is
evidently negative, as claimed. Thus, $S(n,k)$ has a strict local maximum in $k_0$. 

Moreover, say there are $ k_0, k'_0\in(0,\infty)$, where $S(n,k)$ has a local maximum, then Lemma~\ref{lem:1} implies that the magnitude of $|k_0- k_0'|$ is of smaller order than $n^2$. The above considerations imply $k_0=k_0'$, which completes the proof.
\end{proof}

To prove Lemma~\ref{lemma:1}, we shall approximate the summand $S(n,k)$, given in \eqref{eq:AB},
with the help of Stirling's formula \eqref{eq:AG}. 
It turns out that, under this approximation, the sum $\sum_{k\ge0}S(n,k)$ can then be interpreted as a Riemann integral.
\begin{proof}[Proof of Lemma~\ref{lemma:1}]
Let $k_0$ be the unique location of the maximum of $S(n,k)$. 
We recall that Lemma~\ref{lem:1} yields $k_0=An^2+O(n)$. We consider $\log S(n,k)$, for $k\ge\ep n^2$, and write $k=k_0+s$, so that $s\ge -k_0+\ep n^2$. 
Stirling's formula 
\begin{equation} \label{eq:AG}
\log\big(\Ga(z)\big)=\Big(z-\frac {1} {2}\Big)\log(z)-z+\frac {1} {2}\log(2\pi)
+O\big(z^{-1}\big)
\end{equation}
leads to the estimates 
\begin{align*}
\log\Big(\Ga\Big(\frac {n} {2}+\frac {1} {2}+k\Big)\Big)
&=
\Big(\frac {n} {2}+k_0+s\Big)
\log\Big(\frac {n} {2}+\frac {1} {2}+k_0+s\Big)\\
&\kern2cm
-\Big(\frac {n} {2}+\frac {1} {2}+k_0+s\Big)+\frac {1} {2}\log(2\pi)
+O\big(n^{-2}\big)\\
&=
\Big(\frac {n} {2}+k_0+s\Big)\left[
\log(k_0+s)
+
\log\Big(1+\frac {\frac {n} {2}+\frac {1} {2}} {k_0+s}\Big)\right]\\
&\kern2cm
-\Big(\frac {n} {2}+\frac {1} {2}+k_0+s\Big)+\frac {1} {2}\log(2\pi)
+O\big(n^{-2}\big),
\intertext{where we have factored out $k_0+s$. By applying $\log(1+x)=x-\frac {x^2} {2}+O(x^3)$, we obtain}
\log\Big(\Ga\Big(\frac {n} {2}+\frac {1} {2}+k\Big)\Big)&=
\Big(\frac {n} {2}+k_0+s\Big)\left[
\log(k_0)
+
\log\Big(1+\frac {s} {k_0}\Big)
+\frac {\frac {n} {2}+\frac {1} {2}} {k_0+s}
-\frac {1} {2}
\Big(\frac {\frac {n} {2}+\frac {1} {2}} {k_0+s}\Big)^2\right]\\
&\kern2cm
-\Big(\frac {n} {2}+\frac {1} {2}+k_0+s\Big)+\frac {1} {2}\log(2\pi)
+O\big(n^{-1}\big).
%Christian: Das musz repariert werden.
\intertext{Here, terms such as $\frac{n^3}{(k_0+s)^2}$ or $\frac {n} {k_0+s}$ 
are of the order $O(n^ {-1})$ and can therefore be subsumed in the error term.
Thus, we obtain}
\log\Big(\Ga\Big(\frac {n} {2}+\frac {1} {2}+k\Big)\Big)&=
\Big(\frac {n} {2}+k_0+s\Big)\left[
\log(k_0)
+
\log\Big(1+\frac {s} {k_0}\Big)\right] +\frac{1}{2}\frac{(\frac {n} {2})^2}{(k_0+s)}\\
&\kern3.7cm
-(k_0+s)+\frac {1} {2}\log(2\pi)
+O\big(n^{-1}\big).
%&=
%\Big(\frac {n} {2}+k_0+s\Big)\left[
%\log(k_0)
%+
%\log\Big(1+\frac {s} {k_0}\Big)\right]+\frac {1} {8A(1+\frac {s} {k_0})}\\
%&\kern3.7cm
%-(k_0+s)+\frac {1} {2}\log(2\pi)
%+O\big(n^{-1}\big).
\end{align*}
The reasoning for the $O(\,.\,)$-terms are based on our
restriction to $k_0+s\ge\ep n^2$. However, the constants in these error
terms do contain~$\ep$. 

For the other gamma functions in \eqref{eq:AB}, we proceed similarly. If everything is put together, then we obtain
\begin{align*}
\log \big(S(n,k)\big)
&=-\frac {p+6} {2}\left[\log(k_0)+\log\Big(1+\frac {s} {k_0}\Big)\right]
-\frac{1}{2}\frac{n^2(2a^2-2a+1)}{(k_0+s)}
+O\big(n^{-1}\big)\\
&=-\frac {p+6} {2}\left[\log(k_0)+\log\Big(1+\frac {s} {k_0}\Big)\right]
-\frac {p+6} {2\big(1+\frac {s} {k_0}\big)}
+O\big(n^{-1}\big).
\end{align*}
For the sum of the $S(n,k)$, we have
\begin{equation}\label{eq:AI}
\sum_{k\ge \ep n^2}S(n,k)
=\!\!\!\sum_{s\ge -k_0+\ep n^2} \!\!\!\!
k_0^{-\frac{p+6}{2}}
\Big(1+\frac {s} {k_0}\Big)^{-\frac{p+6}{2}}
\exp\Big(
\frac {-(p+6)} {2\big(1+\tfrac {s} {k_0}\big)}\Big)
\Big(1+O\big(n^{-1}\big)\Big).
\end{equation}
By understanding, the sum over $s$ is taken over those~$s$, for which
$k_0+s$ is an integer. The error of the Riemann sum approximation 
%We see that
\begin{equation} \label{eq:AJ}
\frac {1} {k_0}\!
\sum_{s\ge-k_0+\ep n^2}
\Big(1+\frac {s} {k_0}\Big)^{-\frac{p+6}{2}}
\exp\Big(
\frac {-(p+6)} {2\big(1+\frac {s} {k_0}\big)}\Big)
\approx \int_{-1+\frac {\ep} {A}}^\infty \!\!\!
(1+x)^{-\frac{p+6}{2}}
\exp\Big(\frac {-(p+6)} {2(1+x)}\Big){\rm d}x
\end{equation}
is of the order of magnitude $O(k_0^{-1})$. This follows from the 
fact that the summand in the sum attains a unique local and global
maximum --- namely at $s=0$ --- and therefore the error is bounded 
above by $k_0^{-1}$ times the absolute variation of the summand --- 
which equals twice the maximum. Substitution of all this in \eqref{eq:AI} concludes the proof. 
\end{proof}
Our proof of \eqref{eq:dec-G24} in Theorem~\ref{thm:G24} is now complete.
\end{proof}

\subsection{Proofs for Section \ref{sec:G24 b}}\label{sec:proofs of sec 7.2}
Recall that the action of $\SO(3)\times\SO(3)$ on $\mathbb{S}^2\times \mathbb{S}^2$ by left multiplication is transitive, and  $\SO(4)$ acts transitively on $\G_{2,4}$ by conjugation. For each $O\in\SO(4)$, there are $a,b\in\mathbb{S}^3$ such that $O=L_a R_b$, where the left and right isoclinic rotations are
\begin{equation*}
L_a :=
\begin{pmatrix}
a_1 & -a_2 & -a_3 & -a_4 \\
a_2 &  a_1 & -a_4 &  a_3 \\
a_3 &  a_4 &  a_1 & -a_2 \\
a_4 & -a_3 &  a_2 &  a_1 \\
\end{pmatrix}^\top, \quad
R_b :=
\begin{pmatrix}
b_1 & -b_2 & -b_3 & -b_4 \\
b_2 &  b_1 &  b_4 & -b_3 \\
b_3 & -b_4 &  b_1 &  b_2 \\
b_4 &  b_3 & -b_2 &  b_1 \\
\end{pmatrix}.
\end{equation*}
For $a\in\mathbb{S}^3$, the Euler--Rodrigues formula yields
\begin{equation*}
S_a:=
\begin{pmatrix}
a_1^2+a_2^2-a_3^2-a_4^2 & 2(a_2a_3-a_1a_4)       & 2(a_2a_4+a_1a_3)       \\
2( a_2a_3+a_1a_4)       & a_1^2-a_2^2+a_3^2-a_4^2 & 2( a_3a_4-a_1a_2)       \\
2( a_2a_4-a_1a_3)       & 2(a_3a_4+a_1a_2)       & a_1^2-a_2^2-a_3^2+a_4^2 \\
\end{pmatrix}^\top \in \SO(3).
\end{equation*}
We are looking for $\mathcal{P}:\S^2\times\S^2\rightarrow\G_{2,4}$ satisfying
\begin{equation}\label{eq:00}
  (L_a R_b)\cdot\mathcal{P}(x,y)\cdot(L_a R_b)^\top
  =\mathcal{P}(S_a x,S_b y),\qquad a,b\in\mathbb{S}^3,\quad x,y\in\mathbb{S}^2.
\end{equation}
%In other words, we aim for $\mathcal{P}$ such that the diagram
%\begin{equation*}
%\begin{CD}
%\mathbb{S}^2\times\mathbb{S}^2 @>\mathcal{P}>> \G_{2,4}\\
%@V (S_a,S_b) VV   @VV L_a R_b V\\
%\mathbb{S}^2\times\mathbb{S}^2 @>\mathcal{P}>> \G_{2,4}
%\end{CD}
%\end{equation*}
%commutes for all $a,b\in\S^3$. 
\begin{thm}\label{th:ide th}
There are exactly two mappings $\S^2\times\S^2\rightarrow\G_{2,4}$ satisfying \eqref{eq:00}. One is $\mathcal{P}$ as in \eqref{eq:ultimate P}, and the other is $I_4-\mathcal{P}$. In particular, $\mathcal{P}$ is surjective and, for all $x,y,u,v\in\mathbb{S}^2$,
\begin{equation}\label{eq:ambigui}
\mathcal{P}(u,v)=\mathcal{P}(x,y)\quad
\text{if and only if} \quad (u,v) \in \{\pm (x,y) \}.
\end{equation}
\end{thm}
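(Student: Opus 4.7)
The plan is to prove the theorem in three stages. First, verify by direct computation that $\mathcal{P}$ given by \eqref{eq:ultimate P} satisfies \eqref{eq:00}, from which it immediately follows that $I_4-\mathcal{P}$ does too, since $I_4$ is fixed under conjugation by any orthogonal matrix. Second, prove uniqueness by a base-point argument exploiting the transitivity of $\S^3\times\S^3$ on $\S^2\times\S^2$. Third, deduce the surjectivity of $\mathcal{P}$ and the ambiguity \eqref{eq:ambigui} as consequences.

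For the existence step, I would expand $(L_aR_b)\mathcal{P}(x,y)(L_aR_b)^\top$ in the block form associated with the decomposition $\R^4=\R\oplus\R^3$ underlying \eqref{eq:ultimate P}, and verify that the result has the same block structure as $\mathcal{P}(S_ax,S_by)$: the $(1,1)$-entry becomes $\tfrac12(1+(S_ax)^\top(S_by))$, the off-block entries become $-\tfrac12(S_ax)\times(S_by)$, and the $3\times 3$ block becomes $\tfrac12[(S_ax)(S_by)^\top+(S_by)(S_ax)^\top+(1-(S_ax)^\top(S_by))I_3]$. This reduces to bookkeeping-intensive identities relating the entries of $L_a$ and $R_b$ to inner products, cross products, and outer products under $S_a$ and $S_b$.

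For uniqueness, fix $(x_0,y_0)=(e_3,e_3)$. The stabilizer of $e_3$ under the double cover $a\mapsto S_a$ of $\SO(3)$ is the circle $\{\cos\alpha+\sin\alpha\,\mathbf{k}:\alpha\in[0,2\pi)\}$, so the stabilizer of $(e_3,e_3)$ in $\S^3\times\S^3$ is a $2$-torus $T$. By transitivity of $\S^3\times\S^3$ on $\S^2\times\S^2$, any $\mathcal{P}'$ satisfying \eqref{eq:00} is determined globally by its value at $(e_3,e_3)$, and this value must be fixed under conjugation by $L_aR_b$ for every $(a,b)\in T$. Plugging in the explicit forms of $L_a$ and $R_b$ for $(a,b)\in T$, I find that this conjugation action is block-diagonal in the decomposition $\R^4=\operatorname{span}\{e_1,e_4\}\oplus\operatorname{span}\{e_2,e_3\}$, acting as rotation by $\beta-\alpha$ in the $(e_1,e_4)$-plane and by $-\alpha-\beta$ in the $(e_2,e_3)$-plane. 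As $(\alpha,\beta)$ varies over $T$, the pair $(\beta-\alpha,-\alpha-\beta)$ sweeps out the full $2$-torus of independent rotation angles (the associated linear map being invertible). Since any $2$-dimensional subspace of $\R^4$ invariant under all such independent rotation pairs must be either $\operatorname{span}\{e_1,e_4\}$ or $\operatorname{span}\{e_2,e_3\}$, there are exactly two invariant rank-$2$ projections, $\diag(1,0,0,1)=\mathcal{P}(e_3,e_3)$ and $\diag(0,1,1,0)=I_4-\mathcal{P}(e_3,e_3)$, so $\mathcal{P}'\in\{\mathcal{P},I_4-\mathcal{P}\}$.

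Surjectivity of $\mathcal{P}$ then follows because $\SO(4)=\{L_aR_b:a,b\in\S^3\}$ acts transitively on $\G_{2,4}$ by conjugation: for any $P\in\G_{2,4}$ choose $(a,b)$ with $P=(L_aR_b)\mathcal{P}(e_3,e_3)(L_aR_b)^\top=\mathcal{P}(S_ae_3,S_be_3)$ via \eqref{eq:00}. For the ambiguity \eqref{eq:ambigui} I would invoke \eqref{eq:L L L}, namely $\mathcal{L}(\mathcal{P}(x,y))=xy^\top$: if $\mathcal{P}(u,v)=\mathcal{P}(x,y)$ then $uv^\top=xy^\top$, and comparing the column and row spaces of this rank-$1$ matrix yields $u=\epsilon x$ and $v=\delta y$ with $\epsilon,\delta\in\{\pm 1\}$; the matrix identity then forces $\epsilon\delta=1$, so $(u,v)\in\{\pm(x,y)\}$. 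The reverse implication in \eqref{eq:ambigui} is immediate from the invariance of \eqref{eq:ultimate P} under $(x,y)\mapsto(-x,-y)$. The main obstacle is the uniqueness step, whose technical core is the explicit torus computation and the classification of its invariant subspaces; the existence step is conceptually routine but computationally tedious, while surjectivity and the ambiguity follow almost formally once uniqueness is in hand.
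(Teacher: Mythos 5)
Your proposal is correct and follows essentially the same structure as the paper's proof: verify \eqref{eq:00} for $\mathcal{P}$ by direct expansion; prove uniqueness by evaluating at a base point whose $\SO(3)\times\SO(3)$-stabilizer is a $2$-torus, observing that the corresponding $L_aR_b$ conjugation splits $\R^4$ into two invariant coordinate $2$-planes acted upon by independent $\SO(2)$-rotations, and concluding that the only invariant rank-$2$ projections are the two coordinate-plane projections; derive surjectivity from transitivity of the $\SO(4)$ conjugation action; and deduce \eqref{eq:ambigui} from the rank-one factorization $\mathcal{L}(\mathcal{P}(x,y))=xy^\top$. The only (cosmetic) difference is the choice of base point: you use $(e_3,e_3)$ with $\mathcal{P}(e_3,e_3)=\diag(1,0,0,1)$ and the splitting $\spann\{e_1,e_4\}\oplus\spann\{e_2,e_3\}$, whereas the paper uses $(e_1,e_1)$ with $\mathcal{P}(e_1,e_1)=\diag(1,1,0,0)$ and the splitting $\spann\{e_1,e_2\}\oplus\spann\{e_3,e_4\}$.
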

\begin{proof}[Proof of Theorem~\ref{th:ide th}]\label{proof:P}
The identity \eqref{eq:00} for the specific choice of $\mathcal{P}$ is verified by expanding both sides of the equality and comparing the polynomial expressions. We omit the straightforward but lengthy computation.

Since the conjugate action of $\SO(4)$ on $\G_{2,4}$ is transitive, the identity \eqref{eq:00} also implies surjectivity.
Since left and right eigenspaces of $\mathcal{L}(\mathcal{P}(x,y))$ in \eqref{eq:inv of P} and \eqref{eq:L L L} are uniquely determined, we deduce that \eqref{eq:ambigui} holds.

Let us now address the uniqueness statement. For $a:=(\alpha,0,0)$, $b:=(\beta,0,0)$ with $\alpha,\beta\in\mathbb{S}^1$, we obtain the isoclinic rotations
\begin{align*}
L_a = \begin{pmatrix}
A_\alpha&0\\
0&A_\alpha
\end{pmatrix},
\qquad
R_b = \begin{pmatrix}
B_\beta&0\\
0&B^\top_\beta
\end{pmatrix},
\end{align*}
where $A_\alpha=\left(\begin{smallmatrix}\alpha_1&-\alpha_2\\
\alpha_2&\alpha_1\end{smallmatrix}\right),\;B_\beta = \left(\begin{smallmatrix}
\beta_1&-\beta_2\\
\beta_2&\beta_1
\end{smallmatrix}\right)\in\SO(2)$. Since $S_ae_1=S_b e_1=e_1$, any mapping $\tilde{\mathcal{P}}:\S^2\times\S^2\rightarrow\G_{2,4}$ satisfying \eqref{eq:00} must obey
\begin{equation}\label{eq:rot cond}
L_a R_b\tilde{\mathcal{P}}(e_1,e_2)(L_a R_b)^\top = \tilde{\mathcal{P}}(e_1,e_2),\quad \alpha,\beta\in\mathbb{S}^1.
\end{equation}
Let $V$ denote the range of $\tilde{\mathcal{P}}(e_1,e_2)$, which is a two-dimensional subspace of $\mathbb{R}^4$. The relation \eqref{eq:rot cond} means that $L_aR_b$ maps $V$ into itself, i.e., $L_aR_b V = V$. For all $U_1,U_2\in\SO(2)$, there are $\alpha,\beta\in\mathbb{S}^1$ such that
\begin{equation*}
L_aR_b = \begin{pmatrix}
A_\alpha B_\beta&0\\
0&A_\alpha B_\beta^\top
\end{pmatrix} =
\begin{pmatrix}
U_1&0\\
0&U_2
\end{pmatrix},
\end{equation*}
so that $V$ must either coincide with $\spann\{e_1,e_2\}$ or with $\spann\{e_3,e_4\}$. Hence, $\tilde{\mathcal{P}}(e_1,e_2)$ must coincide with either $\diag(1,1,0,0)$ or $\diag(0,0,1,1)$. It follows from \eqref{eq:00} and $\SO(3)$ acting transitively on $\mathbb{S}^2$ that in the first case $\tilde{\mathcal{P}}=\mathcal{P}$ and in the latter $\tilde{\mathcal{P}}=I_4-\mathcal{P}$.
\end{proof}
\begin{remark}
The Grassmannian $\G_{2,4}$ has been parametrized in \cite{Davis:1999dn} by means of angles but the explicit identity \eqref{eq:00} that steers our present approach has not been considered there. Our parametrization is compatible with the one used in \cite{Conway:1996aa}.
\end{remark}

The following properties of $\mathcal{P}$ are useful.
\begin{lemma}\label{lemma:222}
The probability measure $\sigma_{\G_{2,4}}$, induced by the Haar measure on $\OOO(4)$, is the push-forward measure of $\sigma_{\mathbb S^2}\otimes \sigma_{\mathbb S^2}$ under $\mathcal{P}$, and we have, for all $x,y,u,v\in\S^2$,
\begin{align}
\langle \mathcal{P}(x,y),\mathcal{P}(u,v)\rangle_{\F} &=1+\langle xy^\top,uv^\top\rangle_{\F}=1+ \langle x,u\rangle  \langle y,v\rangle, \label{eq:skal pro vec}\\
\mathcal{I}_-\cdot\mathcal{P}(x,y)\cdot \mathcal{I}_-^\top & = \mathcal{P}(y,x),\label{eq:22}\\
I_4-\mathcal{P}(x,y) & = \mathcal{P}(-x,y) = \mathcal{P}(x,-y),\label{eq:1100}\\
\{\langle x,u\rangle,\langle y,v\rangle\} &= \pm \{
%\xi_+,\xi_-
\cos(\theta_1+\theta_2),\cos(\theta_1-\theta_2)
\},\label{eq:ambi scal xi}
\end{align}
where $\mathcal{I}_-:=\diag(-1,1,1,1)$, and $\theta_1$, $\theta_2$ denote the principal angles between $\mathcal{P}(x,y)$ and $\mathcal{P}(u,v)$.
%, and $\theta_1$, $\theta_2$ denote the principal angles between $\mathcal{P}(x,y)$ and $\mathcal{P}(u,v)$.
%
%For $\mathcal{P}(x,y)$ and $\mathcal{P}(u,v)$ with principal angles $\theta_1$, $\theta_2$, we observe
%\begin{equation}\label{eq:ambi scal xi}
%\{\langle x,u\rangle,\langle y,v\rangle\} = \pm \{\cos(\theta_1+\theta_2),\cos(\theta_1-\theta_2)\}.
%\end{equation}
\end{lemma}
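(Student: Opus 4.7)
I plan to dispatch the five assertions in order of increasing difficulty. The push-forward claim follows from Theorem~\ref{th:ide th} combined with \eqref{eq:00}: since $a\mapsto S_a$ is the standard surjective double cover $\S^3\to\SO(3)$ and $\sigma_{\S^2}$ is the unique $\SO(3)$-invariant probability measure, $\sigma_{\S^2}\otimes\sigma_{\S^2}$ is invariant under $(S_a,S_b)$ for all $a,b\in\S^3$; by \eqref{eq:00}, the push-forward $\mathcal{P}_{*}(\sigma_{\S^2}\otimes\sigma_{\S^2})$ is therefore invariant under conjugation by every $L_aR_b$. Because $\{L_aR_b:a,b\in\S^3\}=\SO(4)$ acts transitively on $\G_{2,4}$, this push-forward must coincide with the unique $\SO(4)$-invariant probability measure $\sigma_{\G_{2,4}}$.

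For \eqref{eq:22} and \eqref{eq:1100}, I will compare the four blocks of \eqref{eq:ultimate P} on both sides. Conjugation by $\mathcal{I}_-=\diag(-1,1,1,1)$ flips the sign of the two off-diagonal blocks of $\mathcal{P}(x,y)$ and leaves the diagonal ones unchanged; swapping $x\leftrightarrow y$ has the same effect because $y\times x=-(x\times y)$ while $\langle x,y\rangle$ and $xy^\top+yx^\top$ are symmetric in $x,y$. For \eqref{eq:1100}, the substitution $x\mapsto -x$ negates $\langle x,y\rangle$, $x\times y$ and $xy^\top+yx^\top$ simultaneously, and a block-by-block check shows that these sign changes reproduce exactly the transition from $\mathcal{P}(x,y)$ to $I_4-\mathcal{P}(x,y)$; the same holds for $y\mapsto -y$.

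The second equality in \eqref{eq:skal pro vec} is immediate from $\trace(yx^\top uv^\top)=\langle x,u\rangle\langle y,v\rangle$. For the first, I will expand $\trace(\mathcal{P}(x,y)^\top\mathcal{P}(u,v))$ in block form and simplify with the Binet--Cauchy identity $(x\times y)\cdot(u\times v)=\langle x,u\rangle\langle y,v\rangle-\langle x,v\rangle\langle y,u\rangle$. Abbreviating $a=\langle x,y\rangle$, $b=\langle u,v\rangle$, $c=\langle x,u\rangle$, $d=\langle y,v\rangle$, $e=\langle x,v\rangle$, $f=\langle y,u\rangle$, the scalar part of $4\trace(PQ)$, namely $(1+a)(1+b)+3(1-a)(1-b)+2a(1-b)+2b(1-a)$, collapses to $4$, and the cross-product part $2(cd-ef)+2ef+2cd$ collapses to $4cd$, yielding $\trace(PQ)=1+cd$.

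Finally, for \eqref{eq:ambi scal xi}, write $P=\mathcal{P}(x,y)$, $Q=\mathcal{P}(u,v)$, and let $V,W\in\R^{4\times 2}$ have orthonormal columns spanning their ranges, so that the singular values of $V^\top W$ are $\cos\theta_1,\cos\theta_2$ and hence $\trace(PQ)=\|V^\top W\|_{\F}^{2}=\cos^2\theta_1+\cos^2\theta_2$. Combined with \eqref{eq:skal pro vec} and the identity $\cos^2\theta_1+\cos^2\theta_2-1=\cos(\theta_1+\theta_2)\cos(\theta_1-\theta_2)$, this yields the product relation $\cos(\theta_1+\theta_2)\cos(\theta_1-\theta_2)=\langle x,u\rangle\langle y,v\rangle$. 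For the squared-sum relation I will use \eqref{eq:00} and transitivity of $\SO(3)\times\SO(3)$ on each $\S^2$ factor to reduce to $x=y=e_1$, where $P=\diag(1,1,0,0)$ and $\cos^2\theta_1,\cos^2\theta_2$ become the eigenvalues of the top-left $2\times 2$ block of $Q$. Computing the determinant of that block and simplifying the remaining cross-product entry $(u_2v_3-u_3v_2)^2$ via the Pythagorean identity $|u\times v|^2=1-\langle u,v\rangle^2$ should give $4\cos^2\theta_1\cos^2\theta_2=(u_1+v_1)^2=(\langle x,u\rangle+\langle y,v\rangle)^2$, matching $(\cos(\theta_1+\theta_2)+\cos(\theta_1-\theta_2))^2$. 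The two pairs $\{\langle x,u\rangle,\langle y,v\rangle\}$ and $\{\cos(\theta_1+\theta_2),\cos(\theta_1-\theta_2)\}$ then share their product and their squared sum, so they coincide as sets up to a simultaneous sign flip. The main obstacle is this last determinant computation: it only simplifies after the Pythagorean identity is used to trade the lone cross-product coordinate against the two ``missing'' ones, at which point the algebra collapses to $(u_1+v_1)^2$.
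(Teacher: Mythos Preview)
Your argument is correct. The push-forward claim, \eqref{eq:22}, and \eqref{eq:1100} match the paper's proof in spirit (direct verification), and your block expansion for \eqref{eq:skal pro vec} with the Binet--Cauchy identity is a valid alternative to the paper's reduction to the special case $u=v=e_1$ followed by \eqref{eq:00}.

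The genuine difference is in \eqref{eq:ambi scal xi}. The paper obtains the product relation $\xi_+\xi_-=\langle x,u\rangle\langle y,v\rangle$ as you do, but for the remaining identification it invokes Theorem~\ref{th:Hpi} to express the reproducing kernel $Q_\lambda(\mathcal{P}(x,y),\mathcal{P}(u,v))$ in terms of Legendre polynomials evaluated at $\langle x,u\rangle$ and $\langle y,v\rangle$, then compares with the known formula \eqref{eq:K lambda} in the variables $\xi_\pm$. Your route is more elementary: you reduce to $P=\diag(1,1,0,0)$ via \eqref{eq:00}, read off $\cos^2\theta_1,\cos^2\theta_2$ as the eigenvalues of the top-left $2\times 2$ block of $Q$, and compute its determinant directly (the Pythagorean substitution $|u\times v|^2=1-\langle u,v\rangle^2$ is exactly what makes the algebra collapse to $(u_1+v_1)^2$). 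This gives the squared-sum relation without any representation theory and in particular avoids the forward reference to Theorem~\ref{th:Hpi}, whose own proof uses \eqref{eq:22}. The paper's approach, on the other hand, ties \eqref{eq:ambi scal xi} conceptually to the harmonic-analytic structure of $\G_{2,4}$, which is the perspective driving the rest of Section~\ref{sec:d}.
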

Note that the right-hand side of \eqref{eq:ambi scal xi} is $\pm \{\xi_+,\xi_-\}$ for $\xi_+,\xi_-$ as in \eqref{eq:transf}. 
\begin{proof}[Proof of Lemma~\ref{lemma:222}]
The product measure $\sigma_{\mathbb S^2}\otimes \sigma_{\mathbb S^2}$ is $\SO(3)\times\SO(3)$ invariant. According to \eqref{eq:00}, the pushforward measure of $\sigma_{\mathbb S^2}\otimes \sigma_{\mathbb S^2}$ on $\mathbb S^2 \times \mathbb S^2$ under $\mathcal{P}$ is $\SO(4)$ invariant, so that the uniqueness of the Haar measure implies the first claim of the lemma.

Each of the remaining claims is first proved for $\mathcal{P}$ as defined in Proposition~\ref{th:ide th} and then argued that it also holds for $I_4-\mathcal{P}$. The identity \eqref{eq:skal pro vec} is easily observed for $u,v=e_1$ first and then \eqref{eq:00} yields the general case. According to
\begin{equation*}
\langle I_4-\mathcal{P}(x,y),I_4-\mathcal{P}(u,v)\rangle_{\F} =\langle \mathcal{P}(x,y),\mathcal{P}(u,v)\rangle_{\F},
\end{equation*}
the identity \eqref{eq:skal pro vec} also holds for $I_4-\mathcal{P}$. The statement \eqref{eq:22} follows from expanding both sides of the equality and comparing polynomial expressions in $x$ and $y$. If it holds for $\mathcal{P}$, then it must also hold for $I_4-\mathcal{P}$. One directly calculates \eqref{eq:1100}. 

In order to check \eqref{eq:ambi scal xi}, we first recognize that principal angles between $I-\mathcal{P}(x,y)$ and $I-\mathcal{P}(u,v)$ coincide with the ones between $\mathcal{P}(x,y)$ and $\mathcal{P}(u,v)$. By $\xi_{\pm}$ as in \eqref{eq:transf} and as at the beginning of the proof of Theorem~\ref{thm:G24}, we observe $\trace(\mathcal{P}(x,y)\mathcal{P}(u,v))=1+\xi_+\xi_-$. Hence, \eqref{eq:skal pro vec} leads to
$ %\begin{equation}\label{eq:product}
\xi_+\xi_- = \langle x,u\rangle \langle y,v\rangle.
$ %\end{equation}
Theorem~\ref{th:Hpi} implies that $Q_\lambda$ in \eqref{eqLrep kernel Q} satisfies
\begin{equation*}
Q_\lambda\left(\mathcal{P}(x,y),\mathcal{P}(u,v)\right) = c_\lambda \left(\mathcal{C}^{\frac{1}{2}}_{\lambda_1+\lambda_2}(\langle x,u\rangle)\mathcal{C}^{\frac{1}{2}}_{\lambda_1-\lambda_2}(\langle y,v\rangle) + \mathcal{C}^{\frac{1}{2}}_{\lambda_1+\lambda_2}(\langle y,v \rangle)\mathcal{C}^{\frac{1}{2}}_{\lambda_1-\lambda_2}(\langle x,u \rangle)\right). 
\end{equation*}
Comparison of this identity with \eqref{eq:K lambda} and few further calculations eventually lead to \eqref{eq:ambi scal xi}. 
%We know from \cite{Davis:1999dn} that \eqref{eq:12345} also holds with $\langle x,u\rangle$ and $\langle y,v\rangle$ replaced with $\xi_+$ and $\xi_-$, respectively. We choose $\lambda=(1,1)$. Since $\mathcal{L}_2$ is an even polynomial, we derive from the respective formulas \eqref{eq:12345},
%\begin{equation*}
%\xi_+^2 + \xi_-^2 = \langle x,u\rangle^2 + \langle y,v\rangle^2.
%\end{equation*}
%Due to \eqref{eq:product}, this implies
%\begin{equation*}
%(\xi_+ + \xi_-)^2 = (\langle x,u\rangle + \langle y,v\rangle)^2.
%\end{equation*}
%The uniqueness of solutions for systems of the type \eqref{eq:uniquness} now yields \eqref{eq:ambi scal xi}.
\end{proof}

\begin{proof}[Proof of Theorem~\ref{th:Hpi}]\label{proof:eigenspaces}
The action of $\SO(3)\times\SO(3)$ leads to the irreducible decomposition 
  \begin{align*}
%  L_2(\mathbb S^2 \times \mathbb S^2) &= \bigoplus_{m,n=0}^{\infty} \mathrm{span}\{ Y_k^m\otimes Y_l^n : k = -m,\dots,m,\; l=-n,\ldots,n\},\\
    L_2\Big(\faktor{\mathbb{S}^2\times \mathbb{S}^2}{\pm 1}\Big) &=\bigoplus_{m+n\in2\mathbb{N}} \mathrm{span}\{ Y_k^m\otimes Y_l^n : k = -m,\dots,m,\; l=-n,\ldots,n\}.
  \end{align*}
The unit quaternions provide a group structure on $\S^3$, so that the mapping $a\mapsto S_a$ between $\faktor{\mathbb{S}^3}{\pm 1}$ and $\SO(3)$ as well as $(a,b)\mapsto L_a R_b$ between
$\faktor{(\mathbb{S}^3\times \mathbb{S}^3)}{\pm 1}$ and $\SO(4)$ become group isomorphisms. Their combination induces a group isomorphism between $\SO(3)\times\SO(3)$ and $\faktor{\SO(4)}{\pm 1}$. 
%The mapping $(S_a,S_b)\mapsto L_aR_b$ leads to a group isomorphism between $\SO(3)\times\SO(3)$ and $\faktor{\SO(4)}{\pm 1}$. 
Condition \eqref{eq:00} requires that the respective actions on $\S^2\times\S^2$ and $\G_{2,4}$ commute with $\mathcal{P}$. Hence, the induced pullback
\begin{equation*}
\mathcal{P}^*:L_2(\G_{2,4})\rightarrow  L_2\big(\faktor{\mathbb{S}^2\times \mathbb{S}^2}{\pm 1}\big),\qquad f\mapsto f(\mathcal{P}(\cdot,\cdot))
\end{equation*}
 is an intertwining isomorphism. In particular, $\mathcal{P}^*$ maps one irreducible subspace into the other.
% The irreducible decomposition of $L_2(\G_{2,4})$ with respect to $\faktor{\SO(4)}{\pm 1}$ is the same as for $\SO(4)$ since the action is by conjugation.
 Thus, the irreducible decomposition of $L_2(\G_{2,4})$ under the action of $\SO(4)$ is
  \begin{equation}\label{eq:decomp so4}
L_2(\G_{2,4}) =\bigoplus_{m+n\in2\mathbb{N}} \mathrm{span}\{ Y_{k,l}^{m,n}: k = -m,\dots,m,\; l=-n,\ldots,n\}.
  \end{equation}
In comparison to the irreducible decomposition of $L_2(\G_{2,4})$ for the action of $\SO(4)$ in \eqref{eq:decomp so4}, the irreducible components $H_\lambda(\G_{2,4})$ with respect to $\OOO(4)$ in \eqref{eq:decomp L2 single} are usually larger. 

Let us denote $ H^{m,n}:=\mathrm{span}\{ Y_{k,l}^{m,n}: k = -m,\dots,m,\; l=-n,\ldots,n\}$. Property \eqref{eq:22} yields that the
irreducible subspaces of $L_2(\G_{2,4})$ under the action of $\OOO(4)$ are
\begin{equation*}
V^{m,n}:=
\begin{cases}
H^{m,n} ,& m=n,\\
H^{m,n}\oplus H^{n,m},& m\neq n,
\end{cases}
\end{equation*}
where $m+n\in2\mathbb{N}$ and $m\geq n$, i.e.,
\begin{align*}
L_2(\G_{2,4}) = \bigoplus_{\substack{m+n\in2\mathbb{N}\\ m\geq n}} V^{m,n}=\bigoplus_{\lambda_1\geq\lambda_2\geq 0} V^{m_\lambda,n_\lambda},
\end{align*}
where $m_\lambda=\lambda_1+\lambda_2$ and $n_\lambda=\lambda_1-\lambda_2$. 

By considering $\mathcal{L}(P)=xy^\top$, $\mathcal{L}(P)\cdot\mathcal{L}(P)^\top=xx^\top$, and $\mathcal{L}(P)^\top\cdot\mathcal{L}(P)=yy^\top$, we observe that the homogeneous polynomials
$ %\begin{equation*}
  x_{i}y_{j}$ and $x_{i}x_{j}$, $y_{i}y_{j}$, for $i,j = 1,\dots,3, 
$ %\end{equation*}
can be written as homogeneous polynomials in the matrix entries of $P \in \G_{2,4}$ of degree $1$ and $2$, respectively. The monomial $x^\alpha y^{\beta}$ with $|\alpha|=m_\lambda$ and $|\beta|=n_\lambda$ is composed of $n_\lambda$ factors of the form $x_iy_j$ and $(m_\lambda-n_\lambda)/2$ factors of the form $x_{i}x_{j}$. Thus, $x^\alpha y^{\beta}$ is a homogeneous polynomial of degree $n_\lambda+2((m_\lambda-n_\lambda)/2) = m_\lambda$ in the matrix entries of $P$. We deduce
\begin{equation}\label{eq:eq}
\bigoplus_{\lambda_1+\lambda_2\leq t} V^{m_\lambda,n_\lambda} \subset \bigoplus_{\lambda_1+\lambda_2\leq t} H_\lambda(\G_{k,d})
\end{equation}
since the right-hand side of \eqref{eq:eq} are the polynomials of degree at most $t$ in the matrix entries of $P\in\G_{2,4}$, cf.~\cite{Bachoc:2004fk}. 
By applying \cite[Formulas~(24.29) and (24.41)]{Fulton:1991fk}, 
we see that the dimension of $H_{\lambda}(\mathcal{G}_{2,4})$ is
\begin{equation}\label{eq:dim H}
\mathrm{dim}(H_{\lambda}(\mathcal{G}_{2,4})) =(2-\delta_{m_\lambda,n_\lambda})(2m_\lambda+1)(2n_\lambda+1),
\end{equation}
which matches $\dim(V^{m_\lambda,n_\lambda})$. Hence, there holds equality in \eqref{eq:eq}. For fixed $t$, the dimensions in \eqref{eq:dim H} are pairwise different, for all $\lambda_1+\lambda_2=t$. An induction over $t$ leads to $H_{\lambda}(\mathcal{G}_{2,4})=V^{m_\lambda,n_\lambda}$.
\end{proof}

\subsection{Proofs for Section \ref{sec:FFT}}\label{app:sec:FFT}
The relation \eqref{eq:Y in e} yields
\begin{equation*}
F(x,y) = \sum_{k,l=-M}^M e^{{\rm i}k\varphi_1}e^{{\rm i}l\varphi_2} B_{k,l}(\theta_1,\theta_2),
\end{equation*} 
where $B_{k,l}(\theta_1,\theta_2)$ are given by
\begin{align*}
B_{k,l}(\theta_1,\theta_2) & := \sum_{m_1=|k|}^M\;\sum_{m_2=|l|}^M f^{m_1,m_2}_{k,l}
\sum_{k'=-m_1}^{m_1} \sum_{l'=-m_2}^{m_2} c^{m_1}_{k,k'} c^{m_2}_{l,l'} e^{{\rm i}k'\theta_1} e^{{\rm i}l'\theta_2}\\
& = \sum_{k',l'=-M}^M e^{{\rm i}k'\theta_1}e^{{\rm i}l'\theta_2} \sum_{m_1=\max(|k|,N-k')}^M \;\;\sum_{m_2=\max(|l|,M-l')}^M f^{m_1,m_2}_{k,l}  c^{m_1}_{k,k'} c^{m_2}_{l,l'}.
\end{align*}
Hence, the coefficients from \eqref{eq:bsss} satisfy
\begin{equation}\label{eq:two sums for g}
b^{k,l}_{k',l'} = \sum_{m_1=\max(|k|,M-k')}^M c^{m_1}_{k,k'} \sum_{m_2=\max(|l|,M-l')}^M f^{m_1,m_2}_{k,l}   c^{m_2}_{l,l'}.
\end{equation}
First evaluating the inner sum for $k,l,l'=-M,\ldots,M$ and $m_1=\max(|k|,M-k'),\ldots,M$, and afterwards the outer sum enables the computation of the coefficients $b^{k,l}_{k',l'}$, for \break $k,l,k',l'=-M,\ldots,M$, in $O(M^5)$ operations provided that the numbers  $c^{m}_{k,k'}$ in \eqref{eq:Y in e} are given. 

\begin{remark}\label{rem:app poly transform}
The complexity for evaluating the two sums in \eqref{eq:two sums for g} can be further reduced to $O(M^4\log^2(M))$ by using a fast polynomial transform, cf.~\cite{Keiner:2009cv,Kunis:2003pt}. 
In this way, the nonequispaced fast Fourier transform on $\S^2\times\S^2$ 
takes $O(M^4\log^2(M)+n|\log(\epsilon)|^4)$ elementary operations. For $n\sim M^4$, this implies the reduction of complexity from $O(M^8)$ to $O(M^4\log^2(M)+M^4|\log(\epsilon)|^4)$ operations.
\end{remark}

The adjoint nonequispaced fast Fourier transform is about computing 
\begin{equation}\label{eq:adjoit fft}
%\hat{b}^{m,n}_{k,l}:=
\sum_{i=1}^M b_i \overline{Y^m_k(x_i)Y^n_l(y_i)},\quad m,n=0,\ldots,N,\;\; k=-m,\ldots,m,\;\; l=-n,\ldots,n,
\end{equation} 
for given coefficients $(b_i)_{i=1}^M\subset \mathbb{C}$ and locations $(x_i,y_i)_{i=1}^M\subset \S^2\times\S^2$. Similar arguments as above and in Section~\ref{sec:FFT} provide a fast evaluation of \eqref{eq:adjoit fft} by using the adjoint nonequispaced fast Fourier transform \texttt{adjoint nfft}, cf.~\cite{Keiner:2009cv,Kunis:2003pt}.

%\newpage

%\bibliographystyle{amsplain}
%\bibliography{../biblio_ehler2}
\providecommand{\bysame}{\leavevmode\hbox to3em{\hrulefill}\thinspace}
\providecommand{\MR}{\relax\ifhmode\unskip\space\fi MR }
% \MRhref is called by the amsart/book/proc definition of \MR.
\providecommand{\MRhref}[2]{%
  \href{http://www.ams.org/mathscinet-getitem?mr=#1}{#2}
}
\providecommand{\href}[2]{#2}

\end{document}